\documentclass[11pt]{article}
\usepackage{amsmath}
\usepackage{amsfonts}
\usepackage{graphicx}
\usepackage{setspace}
\usepackage{amsmath}
\usepackage{amssymb}
\usepackage{latexsym}
\usepackage{amsmath, amsfonts,amssymb, amsthm, euscript,makeidx,color,mathrsfs}

\usepackage[numbers,sort&compress]{natbib}

\oddsidemargin  = 0pt \evensidemargin = 0pt \marginparwidth = 1in
\marginparsep   = 0pt \leftmargin     = 1.25in \topmargin =0pt
\headheight     = 0pt \headsep        = 0pt \topskip =0pt
\footskip       =0.25in \textheight     = 9in \textwidth  = 6.5in

\def\sqr#1#2{{\vcenter{\vbox{\hrule height.#2pt
              \hbox{\vrule width.#2pt height#1pt \kern#1pt \vrule width.#2pt}
              \hrule height.#2pt}}}}
%

%

%


\usepackage{hyperref}
\hypersetup{
    colorlinks=true,
    linkcolor=blue,
    urlcolor=cyan,
    citecolor=cyan
}
\usepackage{caption}


\RequirePackage[capitalize,nameinlink]{cleveref}

\crefname{section}{section}{sections}
\crefname{subsection}{subsection}{subsections}
\Crefname{section}{Section}{Sections}
\Crefname{subsection}{Subsection}{Subsections}

\crefname{condition}{Condition}{Conditions}

\Crefname{figure}{Figure}{Figures}

\crefformat{equation}{\textup{#2(#1)#3}}
\crefrangeformat{equation}{\textup{#3(#1)#4--#5(#2)#6}}
\crefmultiformat{equation}{\textup{#2(#1)#3}}{ and \textup{#2(#1)#3}}
{, \textup{#2(#1)#3}}{ and \textup{#2(#1)#3}}
\crefrangemultiformat{equation}{\textup{#3(#1)#4--#5(#2)#6}}%
{ and \textup{#3(#1)#4--#5(#2)#6}}{, \textup{#3(#1)#4--#5(#2)#6}}{ and \textup{#3(#1)#4--#5(#2)#6}}

\Crefformat{equation}{#2Equation~\textup{(#1)}#3}
\Crefrangeformat{equation}{Equations~\textup{#3(#1)#4--#5(#2)#6}}
\Crefmultiformat{equation}{Equations~\textup{#2(#1)#3}}{ and \textup{#2(#1)#3}}
{, \textup{#2(#1)#3}}{ and \textup{#2(#1)#3}}
\Crefrangemultiformat{equation}{Equations~\textup{#3(#1)#4--#5(#2)#6}}%
{ and \textup{#3(#1)#4--#5(#2)#6}}{, \textup{#3(#1)#4--#5(#2)#6}}{ and \textup{#3(#1)#4--#5(#2)#6}}

\crefdefaultlabelformat{#2\textup{#1}#3}


\newtheorem {theorem}{Theorem}[section]
\newtheorem {lemma}[theorem]{{\bf Lemma}}

\newtheorem {proposition}[theorem]{{\bf Proposition}}
\theoremstyle{remark}
\newtheorem {remark}{{\bf Remark}}[section]

\newtheorem {condition}{{\bf Condition}}[section]
\theoremstyle{definition}
\newtheorem {definition}{{\bf Definition}}[section]

\theoremstyle{plain} \numberwithin {equation}{section}

\numberwithin{assumption}{section}

\allowdisplaybreaks

\usepackage{enumerate}

\def\ns{\noalign{\medskip}}
\def\ds{\displaystyle}
\def\dbP{{\mathbb{P}}}
\def\dbR{{\mathbb{R}}}

\def\Om{\Omega}
\def\deq{\mathop{\buildrel\Delta\over=}}

\begin{document}

\title{\bf Exact Controllability for a Refined Stochastic Plate Equation\footnote{This work is supported by the NSF of China under grants 12025105, 11971334 and 11931011, by the Chang Jiang Scholars Program from the Chinese Education Ministry, and by the Science Development Project of Sichuan University under grants 2020SCUNL101 and 2020SCUNL201.}}

\author{
    Qi L\"{u}
    \footnote{School of Mathematics, Sichuan University, Chengdu, P. R. China. Email: lu@scu.edu.cn. }
    ~~~ and ~~~
    Yu Wang
    \footnote{School of Mathematics, Sichuan University, Chengdu, P. R. China.
    Email: yuwangmath@163.com.}
}

\date{}

\maketitle

\begin{abstract}
    A widely used stochastic plate equation is the classical plate equation perturbed by a term
    of It\^o's integral. However, it is known that this equation is not exactly controllable even if the controls are
    effective everywhere in both the drift and the diffusion terms and also on the boundary. In
    some sense, this means that some key feature has been ignored in this model. Then, a one-dimensional refined stochastic plate equation is proposed and its exact controllability is established in \cite{Yu2022}. In this paper, by means of a new
    global Carleman estimate, we establish the exact controllability of the multidimensional refined stochastic plate equation with two interior controls and two boundary controls.  Moreover, we give a result about the lack of exact controllability, which
    shows that the action of two interior controls and at least one boundary control is necessary.
\end{abstract}

\noindent\bf AMS Mathematics Subject Classification.
\rm 93B05, 93B07.

\noindent{\bf Keywords}. Stochastic plate equation,  exact controllability,  observability estimate, Carleman estimate.

\pagestyle{plain}

\section{Introduction}

Let $T>0$ and $(\Omega, \mathcal{F}, \mathbf{F},   \mathbb{P})$ with $\mathbf   F=\{\mathcal{F}_{t}\}_{t \geq 0}$ be a complete filtered probability space on which a   one-dimensional standard Brownian motion   $\{W(t)\}_{t \geq 0}$ is defined and   $\mathbf{F}$ is the natural filtration generated   by $W(\cdot)$, augmented by all the $\mathbb{P}$   null sets in $\mathcal{F}$.
Write $ \mathbb{F} $   for the progressive $\sigma$-field with respect   to $\mathbf{F}$.
Let $H$ be a Banach space.
Denote by $L^2_{\mathcal{F}_t}(\Omega;H)$ the   space of all $\mathcal{F}_t$-measurable random   variables $\xi$ such that   $\mathbb{E}|\xi|_H^2<\infty$; by   $L_{\mathbb{F}}^{2}(0, T ; H)$ the space   consisting of all $H$-valued $\mathbf F$-adapted   processes $X(\cdot)$ such that   $\mathbb{E}\bigl(|X(\cdot)|_{L^{2}(0, T ;   H)}^{2}\bigr)<\infty$; by   $L_{\mathbb{F}}^{\infty}(0, T ; H)$  the space   consisting of all $H$-valued $\mathbf F$-adapted   bounded processes;
and by   $C_{\mathbb{F}}([0,T]; L^{2}(\Omega; H))$ the   space consisting of all $H$-valued $ \mathbf{F}   $-adapted processes $X(\cdot)$ such   that $ X(\cdot): [0,T] \rightarrow L^{2}_{\mathcal{F}_{\cdot}}(\Omega;H) $ is continuous.
All these spaces are  Banach spaces with the canonical norms (e.g.,   \cite[Section 2.6]{Lue2021a}).

Let $G \subset \mathbb{R}^{n}$ ($ n \in \mathbb{N} $) be   a bounded domain with a $C^4$ boundary   $\Gamma$. Set $Q=(0, T) \times G$ and $   \Sigma= (0, T) \times \Gamma$. Denote by $\nu(x) =
(\nu^1(x), \cdots, \nu^n(x))$ the unit outward
normal vector of $\Gamma$ at point $x$.

Consider the following refined stochastic plate equation:
\begin{equation}\label{eq.eqIIICon}
    \left\{
        \begin{alignedat}{2}
            &d y = \hat{y} d t +(a_{3} y + f) d W(t) && \quad \text { in } Q, \\
            &d \hat{y} +\Delta^2 y d t = (a_{1} y + a_{2} \cdot \nabla y + a_{5} g)  d t+ (a_{4} y + g)d W(t) && \quad \text { in } Q, \\
            &y= h_{1},  \frac{\partial y}{ \partial \nu}=h_{2} && \quad \text { on } \Sigma, \\
            & (y(0), \hat{y}(0)) = (y_{0},\hat{y}_{0}) && \quad \text{ in } G.
        \end{alignedat}
    \right.
\end{equation}
Here, $ (y_{0}, \hat{y}_{0}) \in H^{-1}(G) \times (H^{3}(G) \cap H^{2}_{0}(G))^{*}$ (where $(H^{3}(G) \cap H^{2}_{0}(G))^{*}$ is the dual space of $H^{3}(G) \cap H^{2}_{0}(G)$ with respect to the pivot space $ L^{2}(G)$),  the coefficients
\begin{align*}
a_{1}, a_{3}, a_{4} \in L^{\infty}_{\mathbb{F}} (0,T; W^{1,\infty}(G)),\quad
a_{2} \in L^{\infty}_{\mathbb{F}} (0,T; W^{2,\infty}(G;\mathbb{R}^{n})),\quad
a_{5} \in L^{\infty}_{\mathbb{F}} (0,T; W^{3,\infty}(G)).
\end{align*}
and the controls
\begin{align*}
(f, g, h_{1}, h_{2}) & \in
L^{2}_{\mathbb{F}} (0,T; H^{-1}(G)) \times
L^{2}_{\mathbb{F}} (0,T; (H^{3}(G) \cap H^{2}_{0}(G))^{*}) \times
L^{2}_{\mathbb{F}} (0,T; L^{2}(\Gamma))
\\
& \quad \times
L^{2}_{\mathbb{F}} (0,T; H^{-1}(\Gamma)).
\end{align*}
\begin{remark}
The term $ a_{5} g $ reflects the influence of the control $ g $ in the diffusion term on the drift term, i.e., if one
puts a control $ g $ in the diffusion term and $ a_{5} g $ will appear as a side effect.
This leads to some technical difficulties in the study of the exact controllability of \cref{eq.eqIIICon}.
\end{remark}

The control system \cref{eq.eqIIICon} is a nonhomegeneous boundary value problem. Its solution is understood in the sense of transposition. For the readers' convenience, we recall it briefly below. A systematic introduction to that can be found in \cite[Section 7.2]{Lue2021a}.

First, we introduce the following reference equation:
\begin{align}
    \label{eq.reference}
    \left\{
        \begin{alignedat}{2}
            &d z = \hat{z} d t + (Z - a_{5} z) d W(t) && \quad \text { in } Q_{\tau}, \\
            &d \hat{z} +\Delta^2 z d t = [ (a_{1} - \operatorname{div} a_{2} - a_{4} a_{5}  ) z - a_{2} \cdot \nabla z  - a_{3} \hat{Z} + a_{4} Z]dt + \hat{Z} d W(t)&& \quad \text { in } Q_{\tau}, \\
            &z= \frac{\partial z}{ \partial \nu}= 0 && \quad \text { on } \Sigma_{\tau}, \\
            & (z(\tau), \hat{z}(\tau)) = (z^{\tau},\hat{z}^{\tau}) && \quad \text{ in } G
            ,
        \end{alignedat}
        \right.
\end{align}
where $ \tau \in (0,T] $, $ Q_{\tau} \deq (0,\tau) \times G $, $ \Sigma_{\tau} \deq (0,\tau) \times \Gamma$, and $ (z^{\tau},\hat{z}^{\tau}) \in L_{\mathcal{F}_{\tau}}^{2}(\Omega; H^{3}(G) \cap H^{2}_{0}(G))  \times L_{\mathcal{F}_{\tau}}^{2}(\Omega; H^{1}_{0} (G)  )$.
By the classical well-posedness result for backward stochastic evolution equations (e.g.,\cite[Section 4.2]{Lue2021a}), we know that  \eqref{eq.reference} admits a unique weak solution
\begin{align*}
    (z,Z, \hat {z}, \hat{Z}) & \in
    L^{2}_{\mathbb{F}}(\Omega;C([0,\tau];( H^{3}(G) \cap H^{2}_{0}(G) )))
    \times L^{2}_{\mathbb{F}}(0,\tau; ( H^{3}(G) \cap H^{2}_{0}(G) ) )
    \\
    & \quad
    \times L^{2}_{\mathbb{F}}(\Omega;C([0,\tau]; H_{0}^{1} (G) ))
    \times L^{2}_{\mathbb{F}}(0,\tau; H_{0}^{1} (G) ).
\end{align*}
Furthermore, for $ 0 \leq s,t \leq \tau $, it holds that
    \begin{equation}\label{prop.energyEst}
    \begin{aligned}
    &|(z(t),\hat{z} (t))|_{L^{2}_{\mathcal{F}_{t}}(\Omega; H^{3}(G) \cap H_{0}^{2}(G) )\times L^{2}_{\mathcal{F}_{t}}(\Omega; H_{0}^{1}(G))}
    \\
    & \leq
    C (
    |(z(s), \hat{z}(s))|_{L^{2}_{\mathcal{F}_{s}}(\Omega; H^{3}(G) \cap H_{0}^{2}(G) )\times L^{2}_{\mathcal{F}_{s}}(\Omega; H_{0}^{1}(G))}\\
    &\qquad
    + |(Z, \hat{Z})|_{L^{2}_{\mathbb{F}}(0, \tau; H^{3}(G) \cap H_{0}^{2}(G))\times  L^{2}_{\mathbb{F}}(0, \tau; H_{0}^{1}(G))}
    )
    .
    \end{aligned}
\end{equation}
Here and in what follows,
we denote by $C$  a generic positive constant depending on $G$, $T$, $\tau$ and $ a_{i} $, $ i=1,\cdots,5 $, whose value may vary from line to line.

Next, we give the following hidden regularity for solutions to \cref{eq.reference}.

\begin{proposition}
    \label{prop.HiddenRegular}
    Let $ (z^{\tau},\hat{z}^{\tau}) \in L_{\mathcal{F}_{\tau}}^{2}(\Omega; H^{3}(G) \cap H^{2}_{0}(G)) \times L_{\mathcal{F}_{\tau}}^{2}(\Omega; H^{1}_{0} (G)  ) $. Then the solution $ (z, Z, \hat{z}, \hat{Z}) $ of \cref{eq.reference} satisfies $ |\nabla \Delta z| |_{\Gamma} \in L^{2} _{\mathbb{F}}(0, \tau;L^{2}(\Gamma))  $. Furthermore,
    \begin{align*}
        |\nabla \Delta z|_{L^{2} _{\mathbb{F}}(0, \tau;L^{2}(\Gamma))}
        & \leq
        C |(z^{\tau},\hat{z}^{\tau})|_{L_{\mathcal{F}_{\tau}}^{2}(\Omega; H^{3}(G) \cap H^{2}_{0}(G))  \times L_{\mathcal{F}_{\tau}}^{2}(\Omega; H^{1}_{0} (G)  ) }
        .
    \end{align*}
\end{proposition}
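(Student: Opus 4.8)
The plan is to prove the a~priori estimate by a multiplier argument adapted to the backward stochastic setting and then to define the trace $\nabla\Delta z|_{\Gamma}$ by density. First I would use the well-posedness of \eqref{eq.reference} together with the energy estimate \eqref{prop.energyEst} and a density argument to reduce to terminal data $(z^{\tau},\hat z^{\tau})$ smooth enough that the solution $(z,Z,\hat z,\hat Z)$ is classical and all integrations by parts and traces below are legitimate; the resulting bound then extends to general data by passing to the limit, which at the same time makes sense of $\nabla\Delta z|_{\Gamma}$ as an element of $L^{2}_{\mathbb{F}}(0,\tau;L^{2}(\Gamma))$. The point of working in this smooth regime is that the homogeneous conditions $z=\partial_\nu z=0$ on $\Sigma$ hold for \emph{every} $t$, so by reading off the drift and diffusion parts of $dz$ and $d\hat z$ and using $z,Z\in H^{2}_{0}(G)$, $\hat z,\hat Z\in H^{1}_{0}(G)$, one gets that $\hat z$, $\partial_\nu\hat z$, $Z$, $\partial_\nu Z$, $\hat Z$ and $\Delta^{2}z$ all vanish on $\Sigma$. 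I would then fix a vector field $h\in C^{3}(\overline{G};\mathbb{R}^{n})$ with $h=\nu$ on $\Gamma$ (available since $\Gamma\in C^{4}$).

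The core is an integrated identity produced by pairing the second equation of \eqref{eq.reference} with the multiplier $h\cdot\nabla(\Delta z)$. Concretely, I would apply It\^o's formula to $t\mapsto\int_{G}\hat z\,(h\cdot\nabla\Delta z)\,dx$ and take expectation, which annihilates the stochastic integrals and yields, after integrating over $(0,\tau)\times G$: a spatial boundary integral from the $\Delta^{2}z$ term, the temporal boundary terms $\mathbb{E}[\int_{G}\hat z\,(h\cdot\nabla\Delta z)\,dx]_{0}^{\tau}$, interior quadratic terms, and an It\^o correction of the form $\mathbb{E}\int_{0}^{\tau}\!\int_{G}\hat Z\,\bigl(h\cdot\nabla\Delta(Z-a_{5}z)\bigr)\,dx\,dt$. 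The spatial boundary integral is exactly the Rellich term for $w=\Delta z$, and with $h=\nu$ it reduces on $\Gamma$ to $\tfrac12\,\mathbb{E}\int_{\Sigma}\bigl(|\partial_\nu\Delta z|^{2}-|\nabla_{\Gamma}\Delta z|^{2}\bigr)$. Every other term is to be absorbed into the data: the temporal boundary and interior terms are bounded by $\mathbb{E}\sup_{t}(|z|_{H^{3}}^{2}+|\hat z|_{H^{1}}^{2})$ and $\mathbb{E}\int_{0}^{\tau}|z|_{H^{3}}^{2}$, hence by \eqref{prop.energyEst}, where the vanishing of $\partial_\nu\hat z$ on $\Sigma$ is what kills the would‑be boundary term $\int_{\Sigma}|\partial_\nu\hat z|^{2}$; and the It\^o correction is finite precisely because $Z\in H^{3}(G)\cap H^{2}_{0}(G)$ and $a_{5}\in W^{3,\infty}(G)$ keep $\nabla\Delta(Z-a_{5}z)\in L^{2}(G)$, so it is controlled by $|\hat Z|_{L^{2}_{\mathbb{F}}(0,\tau;H^{1}_{0})}\,|(Z,z)|_{L^{2}_{\mathbb{F}}(0,\tau;H^{3})}$ and thus by the data.

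The genuine obstacle is that the Rellich identity controls only the \emph{difference} $|\partial_\nu\Delta z|^{2}-|\nabla_{\Gamma}\Delta z|^{2}$, whereas the claim asks for the full gradient, and the tangential part $\nabla_{\Gamma}\Delta z$ is itself a half‑derivative gain over what the trace theorem provides. To recover it I would exploit the boundary conditions once more: since $\Delta^{2}z=0$ on $\Sigma$, writing $\Delta^{2}z=\partial_{\nu\nu}\Delta z+H\partial_\nu\Delta z+\Delta_{\Gamma}\Delta z$ in boundary normal coordinates (with $H$ the mean curvature) gives $\Delta_{\Gamma}\Delta z=-\partial_{\nu\nu}\Delta z-H\partial_\nu\Delta z$ on $\Sigma$, whence $\int_{\Gamma}|\nabla_{\Gamma}\Delta z|^{2}=-\int_{\Gamma}(\Delta_{\Gamma}\Delta z)\,\Delta z=\int_{\Gamma}(\partial_{\nu\nu}\Delta z+H\partial_\nu\Delta z)\,\Delta z$ after integration by parts on the closed manifold $\Gamma$; combining this with a normal‑direction integration by parts to dispose of the top‑order trace $\partial_{\nu\nu}\Delta z$ and with the Rellich difference should bound $\mathbb{E}\int_{\Sigma}|\nabla_{\Gamma}\Delta z|^{2}$ by $\mathbb{E}\int_{\Sigma}|\partial_\nu\Delta z|^{2}$ plus energy, after which adding the two contributions yields $\mathbb{E}\int_{\Sigma}|\nabla\Delta z|^{2}\le C\,|(z^{\tau},\hat z^{\tau})|^{2}$. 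I expect the two steps that absorb most of the effort to be (i) reorganizing the top‑order interior contribution $\mathbb{E}\int_{0}^{\tau}\!\int_{G}\Delta^{2}z\,(h\cdot\nabla\Delta z)$ through the equation and It\^o's formula so that only energy‑controlled quantities survive, and (ii) this normal/tangential reconciliation on $\Gamma$, which is where the clamped boundary conditions and the regularity $a_{5}\in W^{3,\infty}(G)$ must be used in full.
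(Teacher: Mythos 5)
Your first step -- the Rellich multiplier $h\cdot\nabla\Delta z$ applied via It\^o's formula to $\int_G\hat z\,(h\cdot\nabla\Delta z)\,dx$, with the It\^o correction $\mathbb{E}\int_{Q_\tau}\hat Z\,h\cdot\nabla\Delta(Z-a_5z)\,dx\,dt$ absorbed using $Z\in H^3(G)$ and $a_5\in W^{3,\infty}(G)$ -- is exactly the paper's first step, and it correctly yields $\mathbb{E}\int_{\Sigma_\tau}\bigl(|\partial_\nu\Delta z|^2-|\nabla_\sigma\Delta z|^2\bigr)\le C\,|(z^\tau,\hat z^\tau)|^2$. The gap is in your recovery of the tangential part. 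You propose to use the ``hidden'' boundary relation $\Delta^2z=0$ on $\Sigma$ to write $\int_\Gamma|\nabla_\Gamma\Delta z|^2=\int_\Gamma(\partial_{\nu\nu}\Delta z+H\,\partial_\nu\Delta z)\Delta z$ and then ``dispose of'' $\partial_{\nu\nu}\Delta z$ by a normal integration by parts. This does not close. The term $\int_\Gamma\partial_{\nu\nu}\Delta z\,\Delta z$ is a trace of a fourth derivative of $z$ paired against a second derivative; converting it to an interior integral produces terms of the type $\int_G\nabla^5z\cdot\nabla^2z$ and $\int_G\nabla^4z\cdot\nabla^3z$, and every integration by parts you perform to lower the order regenerates boundary integrals of the form $\int_\Gamma\nabla^4z\,\nabla^2z$ or $\int_\Gamma|\nabla^3z|^2$ -- i.e.\ traces of the same or higher order than the one you are trying to estimate. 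The argument is circular: nothing in the $H^3(G)\times H^1_0(G)$ energy controls $\partial_{\nu\nu}\Delta z|_\Gamma$, so even granting the identity for smooth data, the right-hand side is not bounded uniformly along your approximating sequence and the density argument cannot pass to the limit. (The density reduction itself is also fragile here: the coefficients $a_1,a_3,a_4\in W^{1,\infty}$ prevent bootstrapping the backward solution to the $H^{4.5+}$ regularity needed for $\Delta^2z|_\Gamma$ and $\partial_{\nu\nu}\Delta z|_\Gamma$ to be classical traces, but the non-closing estimate is the more fundamental obstruction.)

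What the paper does instead for the tangential part is to differentiate the equation \emph{tangentially}: it introduces a first-order operator $\mathscr{B}=\sum_ib_i\partial_{x_i}$ with $\sum_ib_i\nu^i=0$ on $\Gamma$, sets $p=\mathscr{B}z$, $\hat p=\mathscr{B}\hat z$, and observes that $(p,\hat p)$ solves a plate system of the same type with $p=0$ on $\Sigma$, source terms controlled by the energy, and Neumann data given by the first-order commutator $[\partial_\nu,\mathscr{B}]z$. A second multiplier identity (built from $\sum_i(\rho^ip_{x_kx_k}p_{x_lx_l})_{x_i}$) then bounds $\mathbb{E}\int_{\Sigma_\tau}|\Delta p|^2$ by the energy, and since $\mathscr{B}\Delta z=\Delta p-[\Delta,\mathscr{B}]z$ with $[\Delta,\mathscr{B}]$ of second order, this yields $\mathbb{E}\int_{\Sigma_\tau}|\nabla_\sigma\Delta z|^2\le C|(z^\tau,\hat z^\tau)|^2$. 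The essential point your proposal misses is that one must trade the tangential derivative on the boundary for a tangential derivative of the \emph{equation} (which costs nothing in the interior because $\mathscr{B}$ is a bounded operator $H^3\to H^2$ and the commutators are lower order), rather than for higher normal derivatives on the boundary, which are not controlled. You would need to replace your second step by this tangential-differentiation argument, or an equivalent one.
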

Proof of Proposition \ref{prop.HiddenRegular} is put in \Cref{sc.2}.

Now we are in a position to give the definition of the transposition solution  to \cref{eq.eqIIICon}.
\begin{definition}
    A pair of stochastic processes $(y, \hat{y}) \in C_{\mathbb{F}} ([0,T];L^{2}(\Omega;H^{-1}(G))) \times$  $
    C_{\mathbb{F}} ([0,T];$  $L^{2}(\Omega;(H^{3}(G) \cap H^{2}_{0}(G))^{*} )) $ is a transposition solution to \cref{eq.eqIIICon} if for any $\tau \in(0, T]$ and $(z^\tau, \hat{z}^\tau) \in L_{\mathcal{F}_\tau}^2(\Omega ; H^{3}(G) \cap H^{2}_{0}(G)) \times$  $ L_{\mathcal{F}_\tau}^2(\Omega ; H^{1}_{0} (G) )$, we have
    $$
    \begin{array}{ll}
     \ds\mathbb{E} \langle\hat{y}(\tau), z^\tau \rangle_{(H^{3}(G) \cap H^{2}_{0}(G))^{*}, H^{3}(G) \cap H^{2}_{0}(G)}- \langle\hat{y}_0, z(0) \rangle_{(H^{3}(G) \cap H^{2}_{0}(G))^{*}, H^{3}(G) \cap H^{2}_{0}(G)}
     \\
     \ns\ds
     -\mathbb{E} \langle y(\tau), \hat{z}^\tau \rangle_{H^{-1}(G), H_{0}^{1}(G)}
            + \langle y_0, \hat{z}(0) \rangle_{H^{-1}(G), H_{0}^{1}(G)} \\
      \ns\ds =-\mathbb{E} \int_0^\tau\langle f, \widehat{Z}\rangle_{H^{-1}(G), H_{0}^{1}(G)} d t+\mathbb{E} \int_0^\tau\langle g, Z\rangle_{(H^{3}(G) \cap H^{2}_{0}(G))^{*}, H^{3}(G) \cap H^{2}_{0}(G)} d t
            \\
      \ns\ds \quad + \mathbb{E} \int_0^\tau \int_{\Gamma} \frac{\partial  \Delta z}{\partial \nu} h_{1} d\Gamma d t
      - \mathbb{E} \int_{0}^{\tau} \langle h_{2} ,  \Delta z \rangle_{H^{-1}(\Gamma), H^{1}(\Gamma)} dt.
     \end{array}
    $$
    Here, $ (z, Z, \hat{z}, \hat{Z}) $ solves \cref{eq.reference}.
\end{definition}

Combining \cref{prop.HiddenRegular} and the well-posedness for  stochastic evolution equation with unbounded control operator in the sense of transposition solution (e.g., \cite[Theorem 7.12]{Lue2021a}), we immediately get the following well-posedness result for \cref{eq.eqIIICon}.
\begin{proposition}
    For each $ (y_{0}, \hat{y}_{0}) \in H^{-1}(G) \times (H^{3}(G) \cap H^{2}_{0}(G))^{*}  $, the system \cref{eq.eqIIICon} admits a unique transposition solution $ (y,\hat{y}) $.
    Moreover,
    \begin{align*}
        &|(y,\hat{y})|_{C_{\mathbb{F}} ([0,T];L^{2}(\Omega;H^{-1}(G))) \times
        C_{\mathbb{F}} ([0,T];L^{2}(\Omega;(H^{3}(G) \cap H^{2}_{0}(G))^{*} ))}
        \\
        & \leq
        C\big(
            |y_{0}|_{H^{-1}(G)}
            + | \hat{y}_{0} |_{(H^{3}(G) \cap H^{2}_{0}(G))^{*}}
             + | f |_{ L^{2}_{\mathbb{F}} (0,T; H^{-1}(G))}
             + | g | _{L^{2}_{\mathbb{F}} (0,T; (H^{3}(G) \cap H^{2}_{0}(G))^{*})}
        \\
        & \quad \quad + | h_{1} |_{ L^{2}_{\mathbb{F}} (0,T; L^{2}(\Gamma)) }
            + | h_{2} |_{L^{2}_{\mathbb{F}} (0,T; H^{-1}(\Gamma))}
        \big).
    \end{align*}
\end{proposition}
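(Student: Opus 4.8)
The plan is to read the duality identity in the definition of transposition solution as the prescription of a bounded linear functional and then to invoke the abstract transposition well-posedness theorem \cite[Theorem 7.12]{Lue2021a}. Fix $\tau\in(0,T]$, let $(z,Z,\hat z,\hat Z)$ denote the solution of \cref{eq.reference} with terminal data $(z^\tau,\hat z^\tau)$, and introduce on the Hilbert space $\mathcal{H}_\tau\deq L^2_{\mathcal{F}_\tau}(\Omega;H^3(G)\cap H^2_0(G))\times L^2_{\mathcal{F}_\tau}(\Omega;H^1_0(G))$ the map
\begin{align*}
\mathcal{L}_\tau(z^\tau,\hat z^\tau)
&\deq \langle \hat y_0, z(0)\rangle_{(H^3(G)\cap H^2_0(G))^*,\, H^3(G)\cap H^2_0(G)} - \langle y_0, \hat z(0)\rangle_{H^{-1}(G),\,H^1_0(G)} \\
&\quad - \mathbb{E}\int_0^\tau \langle f,\hat Z\rangle_{H^{-1}(G),\,H^1_0(G)}\,dt + \mathbb{E}\int_0^\tau \langle g, Z\rangle_{(H^3(G)\cap H^2_0(G))^*,\,H^3(G)\cap H^2_0(G)}\,dt \\
&\quad + \mathbb{E}\int_0^\tau\!\!\int_\Gamma \frac{\partial\Delta z}{\partial\nu}\, h_1\,d\Gamma\,dt - \mathbb{E}\int_0^\tau \langle h_2,\Delta z\rangle_{H^{-1}(\Gamma),\,H^1(\Gamma)}\,dt.
\end{align*}
The identity in the definition says precisely that $\mathbb{E}\langle \hat y(\tau), z^\tau\rangle - \mathbb{E}\langle y(\tau), \hat z^\tau\rangle = \mathcal{L}_\tau(z^\tau,\hat z^\tau)$, i.e. the unknown pair $(\hat y(\tau),-y(\tau))$ is the Riesz representative of $\mathcal{L}_\tau$ in $\mathcal{H}_\tau^{*}$. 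Once $\mathcal{L}_\tau$ is shown to be bounded, the Riesz representation theorem yields a unique $(\hat y(\tau),y(\tau))\in L^2_{\mathcal{F}_\tau}(\Omega;(H^3(G)\cap H^2_0(G))^*)\times L^2_{\mathcal{F}_\tau}(\Omega;H^{-1}(G))$ with norm bounded by $|\mathcal{L}_\tau|$, and \cite[Theorem 7.12]{Lue2021a} packages this construction across all $\tau$ to supply the measurability, the time-continuity into $C_{\mathbb{F}}([0,T];\cdot)$, and the claimed global estimate.

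The heart of the matter is therefore the boundedness of $\mathcal{L}_\tau$, with constant controlled by the data norms. The two interior terms and the two initial-data terms are routine. Applying \eqref{prop.energyEst} with $s=\tau$, $t=0$ together with the weak well-posedness of \cref{eq.reference} stated above it gives $|(z(0),\hat z(0))|_{(H^3\cap H^2_0)\times H^1_0} + |(Z,\hat Z)|_{L^2_{\mathbb{F}}(0,\tau;(H^3\cap H^2_0)\times H^1_0)}\le C|(z^\tau,\hat z^\tau)|_{\mathcal{H}_\tau}$. Cauchy--Schwarz in the respective dual pairings then bounds $\langle \hat y_0,z(0)\rangle$, $\langle y_0,\hat z(0)\rangle$, $\mathbb{E}\int_0^\tau\langle f,\hat Z\rangle\,dt$ and $\mathbb{E}\int_0^\tau\langle g,Z\rangle\,dt$ by the corresponding norm of $\hat y_0$, $y_0$, $f$, $g$ times $|(z^\tau,\hat z^\tau)|_{\mathcal{H}_\tau}$.

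The genuinely delicate contributions are the two boundary integrals, and this is exactly where \Cref{prop.HiddenRegular} is indispensable. For the $h_1$ term one needs $\frac{\partial\Delta z}{\partial\nu}=\nabla\Delta z\cdot\nu\in L^2(\Gamma)$, so that $\mathbb{E}\int_0^\tau\!\int_\Gamma \frac{\partial\Delta z}{\partial\nu}h_1\,d\Gamma\,dt\le |h_1|_{L^2_{\mathbb{F}}(0,\tau;L^2(\Gamma))}\,|\nabla\Delta z|_{L^2_{\mathbb{F}}(0,\tau;L^2(\Gamma))}$, and the last factor is $\le C|(z^\tau,\hat z^\tau)|_{\mathcal{H}_\tau}$ precisely by \Cref{prop.HiddenRegular}. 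For the $h_2$ term, paired in $\langle\cdot,\cdot\rangle_{H^{-1}(\Gamma),H^1(\Gamma)}$, one must verify $\Delta z|_\Gamma\in H^1(\Gamma)$: since $z\in H^3(G)\cap H^2_0(G)$ the ordinary interior trace only gives $\Delta z|_\Gamma\in H^{1/2}(\Gamma)\subset L^2(\Gamma)$, whereas the tangential component of $\nabla\Delta z|_\Gamma\in L^2(\Gamma)$ supplies the tangential gradient in $L^2(\Gamma)$; together these two facts characterize membership in $H^1(\Gamma)$, again with norm $\le C|(z^\tau,\hat z^\tau)|_{\mathcal{H}_\tau}$. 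Summing the six estimates yields
\[
|\mathcal{L}_\tau(z^\tau,\hat z^\tau)|\le C\big(|y_0|_{H^{-1}(G)}+|\hat y_0|_{(H^3(G)\cap H^2_0(G))^*}+|f|+|g|+|h_1|+|h_2|\big)\,|(z^\tau,\hat z^\tau)|_{\mathcal{H}_\tau},
\]
which is the required boundedness with the correct dependence on the data. I expect the only real obstacle to be the $H^1(\Gamma)$-regularity of $\Delta z|_\Gamma$ in the $h_2$ term: it is exactly the tangential part of $\nabla\Delta z|_\Gamma$ furnished by \Cref{prop.HiddenRegular} that upgrades the $H^{1/2}(\Gamma)$ trace to $H^1(\Gamma)$ and legitimizes the pairing against $h_2\in H^{-1}(\Gamma)$; all the remaining bookkeeping—uniqueness, adaptedness, and the passage from the pointwise-in-$\tau$ representatives to a continuous process satisfying the global bound—is delegated to \cite[Theorem 7.12]{Lue2021a}.
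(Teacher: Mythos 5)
Your proposal is correct and follows essentially the same route as the paper, which proves this proposition simply by combining the hidden regularity of \Cref{prop.HiddenRegular} with the abstract transposition well-posedness theorem \cite[Theorem 7.12]{Lue2021a}; you have merely made explicit the boundedness of the defining functional (via \cref{prop.energyEst} for the interior and initial-data terms, and via the trace bound on $\nabla\Delta z$ for the two boundary terms, including the tangential part needed to pair $\Delta z|_\Gamma\in H^1(\Gamma)$ against $h_2\in H^{-1}(\Gamma)$) before delegating existence, uniqueness, and time-continuity to the cited theorem. This is exactly the content the paper leaves implicit, so there is nothing to correct.
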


Now we give the definition of the exact controllability for \cref{eq.eqIIICon}.

\begin{definition}
    \label{def.control}
    The system \cref{eq.eqIIICon} is called exactly controllable at time $ T $ if for any
    $ (y_{0}, \hat{y}_{0}) \in H^{-1}(G) \times (H^{3}(G) \cap H^{2}_{0}(G))^{*} $
    and
    $ (y_{1}, \hat{y}_{1}) \in L^{2}_{\mathcal{F}_{T}} (\Omega;H^{-1}(G) ) \times  L^{2}_{\mathcal{F}_{T}} (\Omega;(H^{3}(G) \cap H^{2}_{0}(G))^{*} )$, there exist controls
    \begin{align*}
        (f, g, h_{1}, h_{2}) & \in
        L^{2}_{\mathbb{F}} (0,T; H^{-1}(G)) \times
        L^{2}_{\mathbb{F}} (0,T; (H^{3}(G) \cap H^{2}_{0}(G))^{*}) \times
        L^{2}_{\mathbb{F}} (0,T; L^{2}(\Gamma))
        \\
        & \quad \times
        L^{2}_{\mathbb{F}} (0,T; H^{-1}(\Gamma))
    \end{align*}
    such that the solution $ (y, \hat{y}) $ to \cref{eq.eqIIICon} satisfies that $ (y(T, \cdot ), \hat{y}(T, \cdot )) = (y_{1}, \hat{y}_{1}) $, $\dbP$-a.s.
\end{definition}

\begin{remark}
    In the definition of the exact controllability for \cref{eq.eqIIICon}, we put the state space to be   $L^{2}_{\mathcal{F}_{T}} (\Om; H^{-1}(G)) \times  L^{2}_{\mathcal{F}_{T}} (\Om; (H^{3}(G) \cap H^{2}_{0}(G))^{*} )$. It is natural to choose the state space as $L^{2}_{\mathcal{F}_{T}} ( \Om;L^2(G)) \times  L^{2}_{\mathcal{F}_{T}} (\Om;  H^{-2}(G))$. Further, the controls $g\in L^{2}_{\mathbb{F}} (0,T; (H^{3}(G) \cap H^{2}_{0}(G))^{*})$  and $h_{2} \in L^{2}_{\mathbb{F}} (0,T; H^{-1}(\Gamma))$ are very irregular. We expect to use more regular controls to achieve the desired goal. However, we do not know how to do that now. Indeed,  even for the deterministic plate equation, to the best of our knowledge, the existing results (e.g., \cite{Lasiecka1989}) can only prove the exact controllability in the space  $H^{-1}(G) \times (H^{3}(G) \cap H^{2}_{0}(G))^{*}$ with controls in the Dirichlet and Neumann boundary conditions.

\end{remark}

The main result of this paper is the following.

\begin{theorem}
    \label{thm.Control-2}
    The system \cref{eq.eqIIICon} is exactly controllable at any time $T>0$.
\end{theorem}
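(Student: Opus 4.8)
The plan is to prove \Cref{thm.Control-2} through the classical duality between exact controllability and observability. By linearity of \cref{eq.eqIIICon}, exact controllability is equivalent to surjectivity of the bounded linear map sending the controls $(f,g,h_1,h_2)$ (with null initial data, the free evolution from $(y_0,\hat y_0)$ being subtracted off) onto the terminal state $(y(T),\hat y(T))\in L^2_{\mathcal F_T}(\Omega;H^{-1}(G))\times L^2_{\mathcal F_T}(\Omega;(H^3(G)\cap H^2_0(G))^*)$. Reading off the transposition identity with $\tau=T$, the adjoint of this map sends a dual datum $(z^T,\hat z^T)$ to the four observations $\big(\hat Z,\,Z,\,\partial_\nu\Delta z|_\Gamma,\,\Delta z|_\Gamma\big)$ generated by the reference equation \cref{eq.reference}, where \Cref{prop.HiddenRegular} is exactly what makes the boundary trace $\partial_\nu\Delta z|_\Gamma$ a well-defined element of $L^2_{\mathbb F}(0,T;L^2(\Gamma))$. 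Surjectivity is then equivalent to the observability estimate
\[
\begin{aligned}
&\big|(z^{T},\hat z^{T})\big|^2_{L^2_{\mathcal F_T}(\Omega;H^3(G)\cap H^2_0(G))\times L^2_{\mathcal F_T}(\Omega;H^1_0(G))}\\
&\quad\le C\Big(|Z|^2_{L^2_{\mathbb F}(0,T;H^3(G)\cap H^2_0(G))}
+ |\hat Z|^2_{L^2_{\mathbb F}(0,T;H^1_0(G))}\\
&\qquad\quad+ \Big|\frac{\partial\Delta z}{\partial\nu}\Big|^2_{L^2_{\mathbb F}(0,T;L^2(\Gamma))} + |\Delta z|^2_{L^2_{\mathbb F}(0,T;H^1(\Gamma))}\Big),
\end{aligned}
\]
required for every terminal datum, with $(z,Z,\hat z,\hat Z)$ solving \cref{eq.reference} at $\tau=T$. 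Essentially all the work is in proving this inequality.

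To reach the observability estimate I would establish a new global Carleman estimate for the backward stochastic operator underlying \cref{eq.reference}. Eliminating the velocity $\hat z$ via the first equation exhibits the dynamics as a fourth-order backward stochastic equation in $z$ whose principal part is the plate operator $\partial_{tt}+\Delta^2$ (up to It\^o corrections); in practice one works directly with the coupled pair $(z,\hat z)$ together with the martingale parts $(Z,\hat Z)$. I would introduce a weight $\theta=e^{\ell}$ with $\ell=\lambda\varphi$ and $\varphi(t,x)=e^{\mu\psi(x)}$, choosing $\psi\in C^4(\overline G)$ with $|\nabla\psi|>0$ on $\overline G$ and the pseudoconvexity properties dictated by the biharmonic symbol, and $\lambda,\mu>0$ large. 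Applying It\^o's formula to suitable quadratic weighted energies built from $\theta z$, $\theta\nabla z$ and $\theta\Delta z$, integrating over $Q$ and taking expectation, yields a weighted integral identity whose dominant interior terms are positive for $\lambda,\mu$ large, while the zeroth- and first-order contributions carrying $a_1,\dots,a_5$ (including the coupling $a_4a_5$ and the diffusion correction $-a_5z$ from \cref{eq.reference}) are absorbed. The homogeneous conditions $z=\partial_\nu z=0$ on $\Sigma$ annihilate most boundary integrals, leaving precisely the normal trace $\partial_\nu\Delta z$ and the $H^1(\Gamma)$-content of $\Delta z$, which are exactly the two available boundary observations.

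With the Carleman estimate established, the observability estimate follows by a standard chain of arguments. The weighted interior bound controls a full weighted $H^3$-energy of $z$ and $H^1$-energy of $\hat z$ on a space–time sub-cylinder in terms of the four observed quantities, the martingale terms $Z,\hat Z$ (which are themselves two of the observations), and lower-order remainders; enlarging the parameters absorbs the remainders. I would then invoke the energy estimate \cref{prop.energyEst} to transport the bound from a time sub-interval up to $t=T$, recovering the full norm of $(z^T,\hat z^T)$ and thereby closing the observability estimate, which by the first paragraph gives \Cref{thm.Control-2}.

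The main obstacle is the construction and proof of the global Carleman estimate for the fourth-order stochastic operator in several space dimensions. Unlike the one-dimensional case of \cite{Yu2022}, the biharmonic principal part forces a delicate weight design and a careful accounting of high-order boundary integrals so that only $\partial_\nu\Delta z$ and $\Delta z$ survive after integration by parts, and the It\^o cross terms produced by the diffusion control's side effect $a_5g$ (reflected as $-a_5z$ and $-a_4a_5z$ in the reference equation) must be dominated without spoiling the positivity of the leading terms. A secondary delicate point is matching the analytic regularity of the surviving interior and boundary observations to the very weak control spaces $(H^3(G)\cap H^2_0(G))^*$ and $H^{-1}(\Gamma)$, which is what dictates the precise functional setting chosen for the transposition solution.
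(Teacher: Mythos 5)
Your overall architecture coincides with the paper's: reduce \cref{thm.Control-2} by duality to the observability estimate \cref{thm.ObeF} for the reference equation \cref{eq.reference} (your displayed inequality is equivalent to the one in \cref{thm.ObeF}, since $|\nabla\Delta z|^{2}=|\partial_{\nu}\Delta z|^{2}+|\nabla_{\sigma}\Delta z|^{2}$ on $\Gamma$), prove that estimate via a global Carleman estimate for the fourth-order backward stochastic system (\cref{thm.ceII}), and transport the resulting interior bound on a time sub-cylinder to $t=T$ with the energy estimate \cref{prop.energyEst}. Up to that level of detail the proposal is the paper's proof.

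The genuine gap is the weight function. You propose $\ell=\lambda\varphi$ with $\varphi(t,x)=e^{\mu\psi(x)}$, i.e.\ a \emph{time-independent} exponential weight of the parabolic type. For this time-reversible, second-order-in-time system that choice breaks the argument at the absorption step. The paper's weight is $\xi=e^{\lambda\eta}$ with $\eta(t,x)=|x-x_{0}|^{2}-\beta\bigl(t-\frac{T}{2}\bigr)^{2}$ as in \cref{eq.xiEta}, and the whole point of the quadratic time decay is \cref{prop.eta}: $\eta\le-\delta$ near $t=0$ and $t=T$ while $\eta\ge\delta$ on a middle interval. Since the Carleman estimate is applied to $v=\chi z$ with $\chi$ a temporal cut-off, the error terms $2\chi_{t}\hat z+\chi_{tt}z$ are supported exactly where $\chi<1$, where the left-hand side gives no control of $\hat z$ or $z$; they can only be absorbed because there $\theta^{2}\le e^{2se^{-\lambda\delta}}$ is exponentially smaller than the factor $e^{2se^{\lambda\delta}}$ available on the middle interval (this is the final absorption $C\exp(2se^{-\lambda\delta}-2se^{\lambda\delta}+C\lambda+8\ln s)\le\frac12$ in Section 4). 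With a $t$-independent weight these two factors are identical and the cut-off errors are of the same size as the quantity being estimated, so the observability estimate does not close. The same time-dependence, through the choice of a large $\beta$ in \cref{eq.e3}, is what makes the argument work for \emph{every} $T>0$. A secondary omission: your sketch does not say how the lower-order interior terms $|v|^{2}$, $|\nabla v|^{2}$ are recovered with the right powers of $s,\lambda$; the paper needs the auxiliary elliptic Carleman estimate \cref{lemma.ellipticE-2} applied to $\Delta v$ for this, and it is not automatic from the fourth-order identity alone.
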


\begin{remark}

Similar to the derivation process in \cite{Yu2022,Lue2019}, the refined stochastic plate equation \cref{eq.eqIIICon} can be obtained from the classical stochastic plate equation:
\vspace{-1mm}
\begin{equation}\label{eq.conPlate}
    \left\{
        \begin{alignedat}{2}
            &d y_{t}+\Delta^2 y d t= (a_{1} y + a_{2} \cdot \nabla y + f) d t+ (a_{4} y + g) d W(t) && \quad \text { in } Q, \\
            &y = h_{1},  \frac{\partial y}{ \partial \nu}=h_{2} && \quad \text { on } \Sigma, \\
            & (y(0),y_{t}(0)) = (y_{0},y_{1}) && \quad \text{ in } G.
        \end{alignedat}
        \right.
\end{equation}
Here,  $ (y_{0},y_{1}) $ are the initial data, and $ f,g, h_{1}, h_{2} $ are controls.

The system \cref{eq.conPlate} is widely used in structural engineering, and can be applied to beams, bridges and other structures, see \cite{Chow1999,Kim2001,DaPrato2014,Brzezniak2005}.
In particular, \cref{eq.conPlate} can be used to characterize fluttering or large-amplitude vibration of an elastic panel excited by aerodynamic forces which are perturbed by random fluctuations (e.g., \cite{Chow1999}).
However, similar to Theorem 4.1 in \cite{Yu2022} and Theorem 2.1 in \cite{Lue2019}, one can show that  the system \cref{eq.conPlate} is not exactly controllable for any $ T>0 $.
Inspired by the negative controllability result, and similar to \cite{Yu2022,Lue2019}, we study a refined stochastic plate equation \cref{eq.eqIIICon}.
\end{remark}

We put four controls in the system \cref{eq.eqIIICon}. Similarly to Theorem 2.3 in \cite{Lue2019}, one can find that boundary controls $ h_{1} $ and $ h_{2} $ in \cref{eq.eqIIICon} can not be dropped simultaneously, and internal controls $ f $ and $ g $ must be acted on the whole domain $ G $. More precisely, we have the following result.
\begin{theorem}
    \label{thm.negativeControl}
    The system \cref{eq.eqIIICon} is not exactly controllable at any time $ T>0 $ provided that one of the following three conditions is satisfied:
    \begin{enumerate}[(1).]
        \item $ a_{3} \in C_{\mathbb{F}} ([0,T]; L^{\infty}(G)) $, $ G \backslash \overline{G_{0}} \neq \emptyset $ and $ \operatorname{supp} f \subset G_{0} $;
        \item $ a_{4} \in C_{\mathbb{F}} ([0,T]; L^{\infty}(G)) $, $ G \backslash \overline{G_{0}} \neq \emptyset $ and $ \operatorname{supp} g \subset G_{0} $;
        \item $ h_{1} = h_{2} = 0 $.
    \end{enumerate}
\end{theorem}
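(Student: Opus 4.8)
The plan is to prove \cref{thm.negativeControl} by the standard duality characterization of exact controllability, combined with an explicit construction of ``unobserved'' solutions of the reference equation \cref{eq.reference}. By the duality theory for transposition solutions (cf. \cite[Section 7.2]{Lue2021a}), exact controllability of \cref{eq.eqIIICon} with a prescribed set of controls is equivalent to an observability estimate for \cref{eq.reference}, in which the terminal data $(z^\tau,\hat z^\tau)$ at $\tau=T$ is dominated, in $L^2_{\mathcal F_T}(\Omega;(H^3(G)\cap H^2_0(G))\times H^1_0(G))$, by the quantities dual to the controls. Reading off the transposition identity in \cref{def.control}, these dual quantities are $\hat Z$ (for $f$), $Z$ (for $g$), $\tfrac{\partial\Delta z}{\partial\nu}\big|_\Gamma$ (for $h_1$) and $\Delta z|_\Gamma$ (for $h_2$). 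Hence, for each of the three conditions, it suffices to produce a nontrivial solution of \cref{eq.reference} for which all the still-available dual quantities vanish; this forces the right-hand side of the observability estimate to be zero while its left-hand side is positive, ruling out exact controllability.

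I would settle condition (3) first, as it is cleanest and serves as a model. Here $h_1=h_2=0$, so only $Z$ and $\hat Z$ are observed, and observability would read
\[
\|(z^T,\hat z^T)\|_{L^2_{\mathcal F_T}(\Omega;(H^3\cap H^2_0)\times H^1_0)}
\le C\big(\|\hat Z\|_{L^2_{\mathbb F}(0,T;H^1_0)}+\|Z\|_{L^2_{\mathbb F}(0,T;H^3\cap H^2_0)}\big).
\]
To defeat it I would construct a solution with $Z\equiv0$ and $\hat Z\equiv0$. Imposing $Z=\hat Z=0$ in \cref{eq.reference} reduces it to $dz=\hat z\,dt-a_5z\,dW$ together with $d\hat z+\Delta^2z\,dt=[(a_1-\operatorname{div}a_2-a_4a_5)z-a_2\cdot\nabla z]\,dt$, under the clamped conditions $z=\partial_\nu z=0$. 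This reduced system is a well-posed \emph{forward} stochastic plate system: starting from suitable nonzero deterministic initial data $(z(0),\hat z(0))\in(H^3\cap H^2_0)\times H^1_0$ and integrating forward produces an adapted quadruple $(z,0,\hat z,0)$ (note $z$ genuinely fluctuates, its martingale part being $-a_5z$, while $\hat z$ carries none). By uniqueness of the solution of \cref{eq.reference} for its own terminal data $(z(T),\hat z(T))$, this quadruple \emph{is} that solution, so indeed $Z\equiv\hat Z\equiv0$ with $(z(T),\hat z(T))\neq0$, and the displayed estimate fails. Interpretively, its boundary traces $\Delta z|_\Gamma,\ \tfrac{\partial\Delta z}{\partial\nu}\big|_\Gamma$ are nonzero — were they to vanish for every initial datum, the resulting over-determined boundary conditions would force $z\equiv0$ — which is exactly what the boundary controls would have observed.

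For conditions (1) and (2) the mechanism is analogous, but must respect that the remaining controls are \emph{full}. The structural point is that $f$ (resp. $g$) is the \emph{only} control entering the diffusion of the first (resp. second) equation of \cref{eq.eqIIICon}; hence if $\operatorname{supp}f\subset G_0$ (resp. $\operatorname{supp}g\subset G_0$) with $G\setminus\overline{G_0}\neq\emptyset$, the diffusion on $G\setminus\overline{G_0}$ is governed solely by $a_3y$ (resp. $a_4y$) and is unactuated. On the adjoint side this means $f$ observes only $\hat Z|_{G_0}$ (resp. $g$ observes only $Z|_{G_0}$). I would therefore seek a nontrivial solution of \cref{eq.reference} with $Z\equiv0$ and $\hat Z|_{G_0}=0$ in case (1) — symmetrically $\hat Z\equiv0$, $Z|_{G_0}=0$ in case (2) — that \emph{additionally} satisfies $\Delta z|_\Gamma=\tfrac{\partial\Delta z}{\partial\nu}\big|_\Gamma=0$ (to defeat the still-full $h_1,h_2$). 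The freedom needed to annihilate these two boundary traces for all $t$ is exactly what the localization supplies: the martingale part that is required to vanish only on $G_0$ stays free on a neighbourhood of $\Gamma\subset\overline{G\setminus G_0}$, and this near-boundary freedom is spent on the traces. The hypothesis $a_3\in C_{\mathbb F}([0,T];L^\infty(G))$ (resp. $a_4$) enters precisely here: it legitimizes the exponential integrating factor $\mathcal E_t=\exp\big(-\int_0^t a_3\,dW-\tfrac12\int_0^t a_3^2\,dt\big)$, which turns the unactuated diffusion on $G\setminus\overline{G_0}$ into a martingale-free process $\mathcal E_t y$ there and so pins down the admissible form of every reachable state on $G\setminus\overline{G_0}$; a target whose restriction to a ball in $G\setminus\overline{G_0}$ is incompatible with this form is then unreachable.

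The technical heart of the proof lies in cases (1)–(2): one must simultaneously enforce the interior condition on the martingale part ($\hat Z|_{G_0}=0$ or $Z|_{G_0}=0$) and the two homogeneous boundary traces, keep the solution nontrivial, and do so in the low-regularity spaces $H^{-1}(G)\times(H^3(G)\cap H^2_0(G))^*$. I expect the main obstacle to be reconciling the interior localization with the boundary constraints coming from the still-active $h_1,h_2$ — i.e. rigorously verifying that the near-boundary freedom afforded by the localization does suffice to annihilate both traces — together with the bookkeeping forced by the cross term $a_5g$, which feeds the control $g$ into the drift of the velocity equation and hence into the drift of the adjoint. Condition (3), by contrast, is immediate from the forward-integration construction above and requires none of this.
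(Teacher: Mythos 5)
Your treatment of condition (3) coincides with the paper's: both reduce to the failure of the observability inequality with only $(Z,\hat Z)$ on the right-hand side, and both defeat it by solving the reduced \emph{forward} system obtained from \cref{eq.reference} by setting $Z=\hat Z=0$, which produces a nontrivial quadruple $(\Phi,0,\hat\Phi,0)$. That part is fine.

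For conditions (1) and (2), however, your proposal has a genuine gap, and the route you put at its centre is almost certainly a dead end. Your primary plan is to find a nontrivial solution of \cref{eq.reference} with $Z\equiv 0$, $\hat Z|_{G_0}=0$ \emph{and} $\Delta z|_\Gamma=\frac{\partial\Delta z}{\partial\nu}\big|_\Gamma=0$. Together with the clamped conditions $z=\frac{\partial z}{\partial\nu}=0$ this prescribes vanishing Cauchy data of orders $0$ through $3$ for the fourth-order operator on $\Gamma$; a unique-continuation/Holmgren-type argument near the boundary then forces $z\equiv 0$, so no such nontrivial solution exists and the construction cannot be carried out. (Moreover, even as a matter of logic, failure of an observability inequality need not be witnessed by a solution with exactly vanishing observation, so tying the whole argument to the existence of such a solution is already a structural weakness.) Your secondary remark --- the exponential integrating factor $\mathcal E_t$ showing that on $G\setminus\overline{G_0}$ every reachable state has a ``drift plus stochastic integral with prescribed, continuous-in-time diffusion'' representation --- is in fact the right mechanism, but you stop short of the decisive step: you never exhibit a target that is \emph{incompatible} with that representation. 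This is exactly what the paper supplies via Peng's counterexample (\cref{lemma.peng}): there is $\zeta\in L^2_{\mathcal F_T}(\Omega)$ admitting no representation $\alpha+\int_0^T\varsigma_1\,dt+\int_0^T\varsigma_2\,dW$ with $\varsigma_2\in C_{\mathbb F}([0,T];L^2(\Omega))$. The paper then argues directly on the forward control system: picking $\rho\in C_0^\infty(G\setminus G_0)$ and supposing the target $(\rho\zeta,0)$ (resp.\ $(0,\rho\zeta)$) is reachable, pairing the first (resp.\ second) equation of \cref{eq.eqIIICon} with $\rho$ kills $f$ (resp.\ $g$) by the support condition and yields precisely such a forbidden representation of $\zeta$, the hypothesis $a_3\in C_{\mathbb F}([0,T];L^\infty(G))$ (resp.\ $a_4$) guaranteeing the required time-continuity of the diffusion integrand $\langle a_3 y,\rho\rangle$ (resp.\ $\langle a_4\phi,\rho\rangle$). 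Without an ingredient of this kind --- a concrete random variable that cannot be represented --- your argument for (1) and (2) does not close.
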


\begin{remark}
    It is worth studying whether one the of boundary controls can be removed.
    This can be done for deterministic plate equation (e.g., \cite{Lasiecka1989,Lions1988a}). However, we have no idea for how to do that.
\end{remark}

\begin{remark}
    By letting $ G=(0,1) $, we can deduce from \cref{thm.ceII} that the system \cref{eq.eqIIICon} is exactly controllable with controls in any nonempty subset of the boundary $ \Gamma $, which has recently been proved in \cite{Yu2022}.
    In fact, thanks to \cref{rk.oneObe}, for any $  (y_{0}, \hat{y}_{0}) $ and $  (y_{1}, \hat{y}_{1}) $ satisfing \cref{def.control}, one can find $ (f, g, h_{1},h_{2}) \in L^{2}_{\mathbb{F}} (0,T; H^{-1}(G)) \times
    L^{2}_{\mathbb{F}} (0,T; (H^{3}(G) \cap H^{2}_{0}(G))^{*}) \times (L^{2}_{\mathbb{F}}(0,T))^{2} $ such that the solution $ (y, \hat{y}) $  to \cref{eq.eqIIICon}, where the boundary conditions are
    \begin{align*}
        y(\cdot,0) = h_{1}, \quad y_{x}(\cdot,0) = h_{2}, \quad
        y(\cdot,1) = 0, \quad y_{x}(\cdot,1) = 0, \quad \text{~ on } (0,T)
    \end{align*}
    satisfies that $ (y(T, \cdot ), \hat{y}(T, \cdot )) = (y_{1}, \hat{y}_{1}) $.
    In the multidimensional case, it is  worth to studying whether \cref{eq.eqIIICon} is still exactly controllable under the assumption that  $ (h_1, h_2) \in (L^{2}_{\mathbb{F}}(0,T;L^{2}(\Gamma_{0})))^{2} $, where $ \Gamma_{0} $ is a nonempty subset of $ \Gamma $.
\end{remark}

By a standard duality argument,  \cref{thm.Control-2}  is equivalent to the following observability estimate (e.g., \cite[Theorem 7.17]{Lue2021a}).

\begin{theorem}\label{thm.ObeF}
    There exists a constant $ C>0 $  such that for every $ (z^{T}, \hat{z}^{T}) \in L_{\mathcal{F}_T}^2 (\Omega; H^{3}(G)\cap H^{2}_{0}(G)) \times L_{\mathcal{F}_T}^2(\Omega ; H^{1}_{0} (G) )$, it holds that
    \begin{align*}
        &|(z^{T},\hat{z}^{T})|^{2}_{ L_{\mathcal{F}_T}^2 (\Omega ;H^{3}(G) \cap H^{2}_{0}(G)) \times L_{\mathcal{F}_T}^2(\Omega ; H^{1}_{0} (G) )}
        \\
        &\leq
        C \mathbb{E} \int_{\Sigma} (|\nabla \Delta z|^{2} + |\Delta z|^{2}) d\Gamma dt
        + C |(Z, \hat{Z})|^{2}_{L^{2}_{\mathbb{F}}(0, T;H^{3}(G) \cap H^{2}_{0}(G) )\times L^{2}_{\mathbb{F}}(0, T; H^{1}_{0} (G) )}
        ,
    \end{align*}
    where $  (z,Z, \hat {z}, \hat{Z})  $ is the solution to the equation \cref{eq.reference} with $ \tau = T $, $ z(T) = z^{T} $, and $ \hat{z}(T) = \hat{z}^{T}$.
\end{theorem}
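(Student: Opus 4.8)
The plan is to prove the observability estimate in \cref{thm.ObeF} via a global Carleman estimate for the deterministic-type principal operator $d\hat{z} + \Delta^2 z\, dt$ governing \cref{eq.reference}, run backward in time. Since \cref{eq.reference} is a backward stochastic plate system, I would first recast it as a single second-order-in-time stochastic equation for $z$ by eliminating $\hat{z}$: formally $d(dz) + \Delta^2 z\,(dt)^2 = \cdots$, so that the leading operator to be Carleman-estimated is $\partial_t^2 + \Delta^2$ acting on $z$, perturbed by first-order spatial terms, zeroth-order terms, and the stochastic correction terms involving $Z$, $\hat{Z}$, $a_3$, $a_4$, $a_5$. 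The weight function should be of the standard form $\theta = e^{\ell}$ with $\ell = \lambda \varphi$, $\varphi(t,x) = e^{\mu \psi(x)} / [t(T-t)]^{\cdots}$ (or a variant adapted to the two-endpoint-free nature of the plate operator), where $\psi$ is a function with $|\nabla \psi| \geq c > 0$ in $G$ and whose normal derivative on $\Gamma$ has a controlled sign, and $\lambda, \mu$ are large parameters.

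The key steps, in order, would be: (i) establish a pointwise Carleman identity/inequality for the operator $\partial_t^2 + \Delta^2$ in the spirit of the fourth-order elliptic/plate Carleman estimates, multiplying $\theta$ times the equation by a suitable first-order weighted expression of $z$ and integrating by parts in both $t$ and $x$; the It\^o terms generate extra quadratic correction terms that must be absorbed, which is the genuinely stochastic feature and where the refinement (the extra equation for $\hat y$, equivalently the structure $dy = \hat y\,dt + \cdots dW$) is essential — it is precisely what makes the diffusion regular enough for the estimate to close. (ii) Integrate the pointwise identity over $Q$ and take expectation, choosing $\lambda, \mu$ large to dominate all lower-order and stochastic cross terms by the principal weighted gradient terms. (iii) Read off that the interior weighted energy of $z$ is bounded by the boundary terms $\mathbb{E}\int_{\Sigma}(|\nabla\Delta z|^2 + |\Delta z|^2)\,d\Gamma\,dt$ (which survive because the boundary conditions $z = \partial z/\partial\nu = 0$ kill the other boundary contributions) plus the terms $|(Z,\hat Z)|^2$ coming from the diffusion. (iv) Convert the weighted interior estimate into the genuine energy estimate at $t = T$ using the energy inequality \cref{prop.energyEst}, which lets me propagate the interior bound (valid on a subinterval where the weight is bounded below) to the terminal data $(z^T, \hat z^T)$, thereby recovering $|(z^T,\hat z^T)|^2$ on the left.

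The main obstacle I anticipate is the treatment of the stochastic and lower-order terms generated by eliminating $\hat z$ and by It\^o's formula: the correction $-a_5 z$ inside the diffusion of $z$ and the coupling $-a_3 \hat Z + a_4 Z$ in the drift of $\hat z$ produce terms that are not obviously subordinate to the principal part, and absorbing $\mathbb{E}\int_Q \theta^2 |\hat Z|^2$-type quantities requires carefully tracking the weighted norms of $(Z,\hat Z)$ so they end up on the right-hand side rather than obstructing the estimate. A secondary difficulty is the boundary analysis for the fourth-order operator: keeping the signs of the weighted boundary integrals correct so that only $|\nabla \Delta z|^2$ and $|\Delta z|^2$ (and no uncontrollable higher-order traces) appear, which is where the hidden regularity in \cref{prop.HiddenRegular} guaranteeing $|\nabla\Delta z||_\Gamma \in L^2_{\mathbb F}(0,T;L^2(\Gamma))$ is used to justify that the boundary terms are well defined. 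Finally, I would need to verify that $T$ arbitrary is admissible, i.e. that the weight can be constructed for any $T>0$, which is standard once the spatial weight $\psi$ is fixed.
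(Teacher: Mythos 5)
Your overall route coincides with the paper's: a global Carleman estimate for the weighted plate operator, applied where the weight is bounded below in time, followed by the energy estimate \cref{prop.energyEst} to propagate the interior bound to the terminal data, with only the traces $|\nabla\Delta z|^2$ and $|\Delta z|^2$ surviving on $\Sigma$ because of the clamped boundary conditions. Two of your concrete choices, however, would not survive contact with the details. First, you cannot reduce \cref{eq.reference} to a scalar second-order-in-time equation: since $dz=\hat z\,dt+(Z-a_5z)\,dW(t)$, the process $\hat z$ is \emph{not} the time derivative of $z$ (there is a nontrivial diffusion), and ``$d(dz)+\Delta^2z\,(dt)^2$'' has no meaning in It\^o calculus. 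The paper instead proves a pointwise weighted identity (\cref{thm.fundamentalIndentity-3}) directly for the pair $(v,\hat v)$, pairing $2\theta I_2$ with $d\hat v+\Delta^2v\,dt$; the quadratic-variation terms $dw\,d\hat w$ then appear explicitly in $\Lambda_4$ and must be tracked, which is precisely where the refined structure enters. Second, your default weight $e^{\mu\psi(x)}/[t(T-t)]^{\cdots}$ is the parabolic template; the paper uses the nondegenerate space--time weight $\eta=|x-x_0|^2-\beta(t-T/2)^2$ with $\theta=e^{se^{\lambda\eta}}$ (\cref{eq.xiEta}, \cref{prop.eta}), together with a temporal cutoff $\chi$ so that $v=\chi z$ satisfies the vanishing endpoint conditions required by \cref{thm.ceII}. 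The passage from the Carleman estimate to observability then rests on a quantitative comparison that your step (iv) glosses over: the cutoff commutators $\chi_t\hat z$, $\chi_{tt}z$ are supported where $\eta\le-\delta$ and hence carry the factor $e^{2se^{-\lambda\delta}}$, while the middle interval where $\eta\ge\delta$ carries $e^{2se^{\lambda\delta}}$, and the argument closes only after choosing $s,\lambda$ with $C\exp(2se^{-\lambda\delta}-2se^{\lambda\delta}+C\lambda+8\ln s)\le\frac12$. This two-parameter absorption (with $s_0$ depending on $\lambda$) is the crux of deducing \cref{thm.ObeF} from \cref{thm.ceII} and is absent from your plan.

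A smaller omission: the lower-order interior terms $s^4\lambda^6\xi^4|\nabla v|^2+s^6\lambda^8\xi^6|v|^2$, needed to absorb the coefficients $a_1,a_2$ and the error set $\mathcal{A}$, are not produced by ``taking $\lambda,\mu$ large''; the paper obtains them from a separate elliptic Carleman estimate (\cref{lemma.ellipticE-2}) applied to $|\Delta v|^2$. Your remarks on the role of \cref{prop.HiddenRegular} and on pushing the terms involving $-a_3\hat Z+a_4Z$ and $-a_5z$ to the right-hand side as $|(Z,\hat Z)|^2$ are consistent with what the paper actually does.
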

\begin{remark}
A sharp trace estimate for the deterministic plate equation is established in \cite{Lasiecka1993}. It suggests that we may need better regularity than $H_{0}^2(G)\times L^2(G)$ for the initial datum of dual system \cref{eq.reference} to estimate $|\nabla \Delta z|$ on the boundary. This is the reason that we choose the final data of \cref{eq.reference} in $ L_{\mathcal{F}_T}^2 (\Omega ; H^{3}(G)\cap H^{2}_{0}(G)) \times L_{\mathcal{F}_T}^2(\Omega ; H^{1}_{0} (G) )$.
\end{remark}

\begin{remark}\label{rk.oneObe}
    Let $ G=(0,1) $. By choosing $ x_{0} > 1 $ in \cref{eq.xiEta}, from the proof of \cref{thm.ceII,thm.ObeF}, we can deduce that
    \begin{align*}
       &|(z^{T},\hat{z}^{T})|^{2}_{ L_{\mathcal{F}_T}^2 (\Omega ;H^{3}(G) \cap H^{2}_{0}(G)) \times L_{\mathcal{F}_T}^2(\Omega ; H^{1}_{0} (G) )}
        \\
        &\leq
        C \mathbb{E} \int_{0}^{T} \big(|z_{xxx}(0)|^{2} + |z_{xx}(0)|^{2}\big) dt
        + C |(Z, \hat{Z})|^{2}_{L^{2}_{\mathbb{F}}(0, T; H^{3}(G) \cap H^{2}_{0}(G) )\times L^{2}_{\mathbb{F}}(0, T; H^{1}_{0} (G) )}
        .
    \end{align*}
    This is the main result in \cite{Yu2022}.
\end{remark}

\begin{remark}
    One can also consider the exact controllability of a refined stochastic palte equation when the boundary controls are as follows:
    \begin{align*}
        y = h_{1}  \quad \mbox{ and } \quad \Delta y = h_{2} \text{~ on ~} \Sigma
        .
    \end{align*}
    By the standard duality argument and the technique of transforming the controllability of the forward stochastic equation into the controllability of a backward equation (e.g., \cite[Section 7.5]{Lue2021a}), we only need to prove that for any $(z_0,\hat z_0) \in  \{\eta\in H^3(G)| \Delta \eta  \in  H^{1}_{0} (G)\} \times H^{1}_{0} (G)  $, it holds that
    \begin{align}\label{eq.oBE02-1}
    |(\Delta z_{0},\hat{z}_{0})|^{2}_{H_{0}^{1}(G) \times H_{0}^{1}(G)}
    \leq
    C \mathbb{E} \int_{0}^{T} \int_{\Gamma} \bigg (\bigg | \frac{\partial \Delta z}{ \partial \nu } \bigg |^{2}
    + \bigg |\frac{\partial z}{ \partial \nu }\bigg |^{2}
    \bigg ) d\Gamma dt,
    \end{align}
    where $(z,\hat z)$ is the solution to
    \begin{equation}
        \label{eq.dualSCB1}
        \left\{
            \begin{alignedat}{2}
                &d z = \hat{z} d t - a_{5} z d W(t) && \quad \text { in } Q, \\
                &d \hat{z} +\Delta^2 z d t = [ (a_{1} - \operatorname{div} a_{2} ) z - a_{2} \cdot \nabla z - a_{4} a_{5} z]dt && \quad \text { in } Q, \\
                &z= \Delta z= 0 && \quad \text { on } \Sigma, \\
                & (z(0), \hat{z}(0)) = (z_{0},\hat{z}_{0}) && \quad \text{ in } G.
            \end{alignedat}
            \right.
    \end{equation}
    Following the idea in \cite{Zhang2001}, we can rewrite the  equation \eqref{eq.dualSCB1} as two coupled stochastic Schr\"{o}dinger equations, and the desired observability estimate can be obtained from the Carleman estimate for the latter.
    In fact, letting $ u = i \hat{z} + \Delta z $, we have
    \begin{align*}
        i dz + \Delta z dt
        = u dt - i a_{5} z d W(t),
    \end{align*}
    and
    \begin{align*}
        - i d u + \Delta u dt
        & = [ (a_{1} - \operatorname{div} a_{2} ) z - a_{2} \cdot \nabla z - a_{4} a_{5} z] dt + i \Delta (a_{5} z) d W(t).
    \end{align*}
    Clearly, it holds that $ z = u = 0 $ on $ \Sigma $.
    Thanks to Theorem 1.2 in \cite{Lue2013},  we can obtain that
    \begin{align}\label{eq.oBE02}
        |(\Delta z_{0},\hat{z}_{0})|^{2}_{H_{0}^{1}(G) \times H_{0}^{1}(G)}
        \leq
        C \mathbb{E} \int_{0}^{T} \int_{\Gamma} \bigg (\bigg | \frac{\partial \Delta z}{ \partial \nu } \bigg |^{2}
        + \bigg |\frac{\partial z}{ \partial \nu }\bigg |^{2}
        + \bigg |\frac{\partial \hat{z}}{ \partial \nu }\bigg |^{2}
        \bigg ) d\Gamma dt
        .
    \end{align}
    Then one may follow the rechnique in \cite{Lasiecka1990} to derive \eqref{eq.oBE02-1} from \eqref{eq.oBE02}. On the other hand, one can also follow the technique in this paper to prove the desired observability \eqref{eq.oBE02-1}.
\end{remark}

There are a large number of published works (\cite{Ball1982,Hansen2011,Haraux1989,Jaffard1990,Lasiecka1989,Lions1988a,Liu1998,Puel1993,Zhang2001,Eller2015} and the references therein) studying the exact controllability for deterministic plate equations.
However, as far as we know, \cite{Yu2022} is the only published work investigating the exact controllability of stochastic beam equations, in which the authors show that \cref{eq.eqIIICon} is exactly controllable when $G$ is an interval. In this paper, we shall use a stochastic Carleman estimate to prove \cref{thm.ObeF}.  Such kind of estimate is one of the most useful tool in studying controllability for stochastic partial differential equations (see \cite{FL1,Gao2015,Lue2014,Lue2013,Lue2021a,Wang,Tang2009} and the references given there). Nevertheless,  \cite{Yu2022} is the only published work using this method to study  the exact controllability of stochastic beam equations.

The rest of this paper is organized as follows. In  \Cref{sc.2}, we provide some preliminaries. \Cref{sc.3} is devoted to establishing a Carleman estimate for the adjoint equation \cref{eq.reference}. By means of that Carleman estimate,
we prove \cref{thm.ObeF} in  \Cref{sc.4}. At last,  \Cref{sc.5} is addressed to the proof of   \cref{thm.negativeControl}.

\section{Some preliminary results}\label{sc.2}

This section provides some preliminary results. In the rest of this paper, the notation $y_{x_{i}} \equiv y_{x_{i}}(x)=\partial y(x) / \partial x_{i}$ will be used for simplicity, where $x_{i}$ is the $i$-th coordinate of a generic point $x=(x_{1}, \cdots, x_{n})$ in $\mathbb{R}^{n} $. In a similar manner, we
use notations $z_{x_j}$, $v_{x_j}$, etc. for the
partial derivatives of $z$ and $v$ with respect
to $x_j$. 

We first prove the hidden regularity for solutions to \cref{eq.reference}.

\begin{proof}[Proof of \cref{prop.HiddenRegular}]

    For any $ \rho \deq (\rho^{1}, \cdots, \rho^{n}) \in C^{2}(\mathbb{R}^{n+1}; \mathbb{R}^{n}) $, by It\^o's formula and \cref{eq.reference}, we have
    \begin{equation}
    \label{eq.identityF}
    \begin{array}{ll}\ds
      \sum_{j=1}^{n} ( 2\rho \cdot \nabla \Delta z \Delta z_{x_{j}} - \rho^{j} |\nabla \Delta z|^{2} ) _{x_{j}} dt
    \\
    \ns\ds = - \operatorname{div} \rho |\nabla \Delta z |^{2} dt
    + 2 \rho \cdot \nabla \Delta z \big[(a_{1} - \operatorname{div} a_{2} -a_{4} a_{5}) z - a_{2} \cdot \nabla z + a_{4} Z - a_{3} \hat{Z}\big] dt
    \\
    \ns\ds \quad
    + 2\rho \cdot \nabla \Delta z \hat{Z} d W(t)
    + 2  \nabla \Delta z D \rho \cdot \nabla \Delta zdt
    -2 d(\rho \cdot \nabla \Delta z \hat{z})
    + 2 \rho_{t} \cdot \nabla \Delta z \hat{z}dt
    \\
    \ns\ds  \quad
    + 2 \rho \cdot \nabla \Delta (Z - a_{5}z) \hat{Z} dt
    + 2 \rho\cdot \nabla \Delta( Z - a_{5}z) \hat{z} d W(t)
    + 2 \operatorname{div} (\hat{z} \rho \nabla^{2} \hat{z} ) dt
    \\
    \ns\ds \quad
    - 2 \operatorname{div} (\hat{z} \nabla \hat{z} D \rho) dt
    + 2 \Delta \rho \cdot \nabla \hat{z} \hat{z} dt
    + 2 \nabla \hat{z} D \rho\cdot \nabla \hat{z} dt
    - \operatorname{div} (\rho|\nabla \hat{z}|^{2})dt
    + \operatorname{div}\rho|\nabla \hat{z}|^{2}dt
    .
    \end{array}
    \end{equation}
    Since $ \Gamma \in C^{3} $, there exists a vector field $ \zeta \in C^{2}(\mathbb{R}^{n};\mathbb{R}^{n}) $ such that $ \zeta  = \nu $ on $ \Gamma $ (e.g., \cite[Lemma 2.1]{Komornik1994}).
    Setting $ \rho= \zeta  $, integrating \cref{eq.identityF} in $ Q_{\tau} $, and taking expectation on $ \Omega $, we have
    \begin{align*}
        & \mathbb{E} \int_{\Sigma_{\tau}} \biggl(
            2 \bigg| \frac{\partial \Delta z}{\partial \nu} \bigg|^{2} 
            - |\nabla \Delta z|^{2}
        \biggr) d\Gamma dt\\
        & 
        = \mathbb{E} \int_{\Sigma_{\tau}}  \sum_{j=1}^{n} ( 2\rho \cdot \nabla \Delta z \Delta z_{x_{j}} - \rho^{j} |\nabla \Delta z|^{2} ) \nu^{j} d\Gamma dt
        \\
        & = - 2 \mathbb{E} \int_{G} \rho \cdot \nabla \Delta z^{\tau} \hat{z}^{\tau} d x
        +  2 \mathbb{E} \int_{G} \rho \cdot \nabla \Delta z(0) \hat{z}(0) d x
        \\
        & \quad
    + \mathbb{E} \int_{Q_{\tau}} \bigl\{
            - \operatorname{div} \rho |\nabla \Delta z |^{2} 
    + 2 \rho \cdot \nabla \Delta z \big[(a_{1} - \operatorname{div} a_{2} -a_{4} a_{5}) z - a_{2} \cdot \nabla z + a_{4} Z - a_{3} \hat{Z}\big] 
    \\
    & \qquad \qquad \quad
    + 2  \nabla \Delta z D \rho \cdot \nabla \Delta z
    + 2 \rho_{t} \cdot \nabla \Delta z \hat{z}
    + 2 \rho \cdot \nabla \Delta (Z - a_{5}z) \hat{Z} 
    + 2 \Delta \rho \cdot \nabla \hat{z} \hat{z} 
    \\[3mm]
    & \qquad \qquad \quad
    + 2 \nabla \hat{z} D \rho\cdot \nabla \hat{z} 
    + \operatorname{div}\rho|\nabla \hat{z}|^{2}
    \bigr\} dxdt
    .
    \end{align*}
    This implies
    \begin{align} \label{eq.6.3.1F}
    &2 \bigg| \frac{\partial \Delta z}{\partial \nu} \bigg|^{2}_{L^{2} _{\mathbb{F}}(0, \tau;L^{2}(\Gamma))}
    - |\nabla \Delta z|^{2}_{L^{2} _{\mathbb{F}}(0, \tau;L^{2}(\Gamma))}
    \leq  C |(z^{\tau},\hat{z}^{\tau})|^{2}_{L_{\mathcal{F}_{\tau}}^{2}(\Omega; H^{3}(G) \cap H_{0}^{2}(G)) \times L_{\mathcal{F}_{\tau}}^{2}(\Omega; H_{0}^{1}(G) ) }
    .
    \end{align}

    Denote by $ \nabla_{\sigma} $ the tangential gradient on $ \Gamma $. We have
    \begin{align*}
    |\nabla \Delta z|^{2} =  \bigg| \frac{\partial \Delta z}{\partial \nu} \bigg|^{2} + |\nabla_{\sigma} \Delta z |^{2},
    \end{align*}
    which, together with \cref{eq.6.3.1F}, implies that
    \begin{align*}
    \bigg| \frac{\partial \Delta z}{\partial \nu} \bigg|^{2}_{L^{2} _{\mathbb{F}}(0, \tau;L^{2}(\Gamma))}
    \leq
    |\nabla_{\sigma} \Delta z |^{2}_{L^{2} _{\mathbb{F}}(0, \tau;L^{2}(\Gamma))}
    + C |(z^{\tau},\hat{z}^{\tau})|^{2}_{L_{\mathcal{F}_{\tau}}^{2}(\Omega; H^{3}(G) \cap H_{0}^{2}(G)) \times L_{\mathcal{F}_{\tau}}^{2}(\Omega; H_{0}^{1}(G) ) }
    .
    \end{align*}
    Now we are going to  prove
    \begin{align}\label{eq.1123final}
    |\nabla_{\sigma} \Delta z |^{2}_{L^{2} _{\mathbb{F}}(0, \tau;L^{2}(\Gamma))}
    \leq
    C|(z^{\tau},\hat{z}^{\tau})|^{2}_{L_{\mathcal{F}_{\tau}}^{2}(\Omega; H^{3}(G) \cap H_{0}^{2}(G)) \times L_{\mathcal{F}_{\tau}}^{2}(\Omega; H_{0}^{1}(G) ) }
    .
    \end{align}
    As the proof of Theorem 2.2 in \cite{Lasiecka1986}, we introduce the following operator
    \begin{equation}\label{11.23-eq1}
        \left\{ 
        \begin{alignedat}{1}
            \mathscr{B} &= \sum_{i=1}^{n} b_{i}(x) \frac{\partial }{\partial x_{i}} = \text{ a first-order differential operator (time independent)}  \\
            & \text {with coefficients $ b_{i} \in C^{4}(\overline{G}) $ and such  that $ \mathscr{B}  $  is tangential to $ \Gamma$, i.e.,} \\
            & \sum_{i=1}^{n} b_{i}(x) \nu^{i} = 0  \text{ on }  \Gamma.
        \end{alignedat}
        \right.
    \end{equation}
The operator $ \mathscr{B} $ can be thought of as the pre-image, under the diffeomorphism via partitions of unity from $ G $ onto  half-space $ \{ (x,y) \in \mathbb{R}^{n} \mid x >0, y \in \mathbb{R}^{n-1} \} $ of the tangential derivative on the boundary $ x=0 $.

    Define
    \begin{align*}
    p \deq \mathscr{B} z \in C_{\mathbb{F}}([0,\tau]; L^{2}(\Omega; H^{2}(G)) ), \qquad
    \hat{p} \deq \mathscr{B} \hat{z} \in C_{\mathbb{F}}([0,\tau]; L^{2} (\Omega; L^{2}(G))).
    \end{align*}
    From \cref{eq.reference}, we have
    \begin{equation}\label{eq.6.3.2F}
    \left\{
    \begin{alignedat}{2}
    &d p = \hat{p} d t + g_{1} d W(t) && \quad \text { in } Q_{\tau}, \\
    &d \hat{p} +\Delta^2 p d t = f_{2} dt + g_{2} d W(t) && \quad \text { in } Q_{\tau}, \\
    &p=0,  \frac{\partial p}{ \partial \nu}= \bigg[ \frac{\partial}{\partial \nu}, \mathscr{B} \bigg] z && \quad \text { on } \Sigma_{\tau}, \\
    & (p(\tau), \hat{p}(\tau)) = ( \mathscr{B} z^{\tau}, \mathscr{B} \hat{z}^{\tau}) && \quad \text{ in } G,
    \end{alignedat}
    \right.
    \end{equation}
    where
    $$
    g_{1}=  \mathscr{B} (Z- a_{5} z),\quad g_{2} =  \mathscr{B}  \hat{Z},
    $$
    and
    $$
    f_{2} = \mathscr{B} \big[ (a_{1} - \operatorname{div} a_{2} -a_{4} a_{5} ) z - a_{2} \cdot \nabla z - a_{3} \hat{Z} + a_{4} Z\big] + [\Delta^{2}, \mathscr{B}] z,
    $$
    and $[\cdot,\cdot]$ denotes the commutator of two operators.

    From \eqref{11.23-eq1}, we get that 
    \begin{align}\label{eq.1123f}
    | [\Delta^{2}, \mathscr{B}] z |_{L^{2}_{\mathbb{F}} (0,\tau; H^{-1}(G))}
    \leq C|(z^{\tau},\hat{z}^{\tau})|_{L_{\mathcal{F}_{\tau}}^{2}(\Omega; (H^{3}(G) \cap H_{0}^{2}(G) ) ) \times L_{\mathcal{F}_{\tau}}^{2}(\Omega; H_{0}^{1}(G) ) }
    \end{align}
    and that 
    \begin{equation}\label{11.23-eq2}
    \begin{array}{ll}\ds
    \mathbb{E} \int_{\Sigma_{\tau}} |\nabla_{\sigma} \Delta z |^{2} d\Gamma dt
     =
    \mathbb{E} \int_{\Sigma_{\tau}} |\mathscr{B} \Delta z |^{2} d\Gamma dt
    =
    \mathbb{E} \int_{\Sigma_{\tau}} | \Delta p |^{2} d\Gamma dt
    + \mathbb{E} \int_{\Sigma_{\tau}} | [\mathscr{B}, \Delta ]    z |^{2} d\Gamma dt
    .
    \end{array}
    \end{equation}
    By \eqref{11.23-eq1} again, we find that 
    \begin{equation}\label{11.23-eq3}
    \begin{array}{ll}\ds
    \mathbb{E} \int_{\Sigma_{\tau}} | [\mathscr{B}, \Delta ]z |^{2} d\Gamma dt &\ds \leq C\mathbb{E} \int_{0}^\tau |z|_{H^{3}(G)}^{2} dt\\
    \ns&\ds
    \leq C |(z^{\tau},\hat{z}^{\tau})|^{2}_{L_{\mathcal{F}_{\tau}}^{2}(\Omega; H^{3}(G) \cap H_{0}^{2}(G)) \times L_{\mathcal{F}_{\tau}}^{2}(\Omega; H_{0}^{1}(G) ) }.
    \end{array}
\end{equation}
   Combining \cref{11.23-eq2,11.23-eq3}, to show \cref{eq.1123final}, we only need to prove
    \begin{align}
    \label{eq.6.3.2-1F}
    \mathbb{E} \int_{\Sigma_{\tau}} | \Delta p |^{2} d\Gamma dt
    \leq C |(z^{\tau},\hat{z}^{\tau})|^{2}_{L_{\mathcal{F}_{\tau}}^{2}(\Omega; (H^{3}(G) \cap H_{0}^{2}(G) ) ) \times L_{\mathcal{F}_{\tau}}^{2}(\Omega; H_{0}^{1}(G) ) }.
    \end{align}

    For any $ \rho \deq (\rho^{1}, \cdots, \rho^{n}) \in C^{2}(\mathbb{R}^{n+1}; \mathbb{R}^{n}) $, by It\^o's formula and \cref{eq.6.3.2F}, we have
    \begin{equation}\label{eq.2OrderIdentityF}
    \begin{array}{ll}\ds  
     \sum_{i,k,l =1}^{n} (\rho^{i}p_{x_{k}x_{k}} p_{x_{l}x_{l}})_{x_{i}} dt
    \\[-1mm] 
    \ns\ds =
    \sum_{i,k,l =1}^{n} \big[
    \rho^{i}_{x_{i}}p_{x_{k}x_{k}} p_{x_{l}x_{l}}
    +  (2 \rho^{i} p_{x_{i}x_{k}x_{k} } p_{x_{l}})_{x_{l}}
    -  (2 \rho^{i}_{x_{l}} p_{x_{i}x_{k} } p_{x_{l}})_{x_{k}}
    + 2 \rho^{i}_{x_{l}x_{k}} p_{x_{i}x_{k}} p_{x_{l}}
    \\ 
    \ns\ds \qquad \quad ~~
    + 2 \rho^{i}_{x_{l}} p_{x_{i}x_{k}} p_{x_{l} x_{k} }
    - (2 \rho^{i} p_{x_{l}x_{k}x_{k}} p_{x_{l}})_{x_{i}}
    + (2 \rho^{i}_{x_{i}}  p_{x_{l}x_{k}} p_{x_{l}})_{x_{k}}
    -  2 \rho^{i}_{x_{i}x_{k}} p_{x_{l}x_{k} } p_{x_{l}}
    \\[3mm] 
    \ns\ds \qquad \quad ~~
    -  2 \rho^{i}_{x_{i}} p_{x_{l}x_{k} } p_{x_{l}x_{k}}
    + (2 \rho^{i}  p_{x_{l}x_{k} x_{k}} p_{x_{i}})_{x_{l}}
    - (2 \rho^{i}_{x_{l}}  p_{x_{l}x_{k} } p_{x_{i}})_{x_{k}}
    +  2 \rho^{i}_{x_{l}x_{k}} p_{x_{l}x_{k} }p_{x_{i}}
    \\[3mm]  
    \ns\ds \qquad \quad ~~
    +  2 \rho^{i}_{x_{l}} p_{x_{l}x_{k}}  p_{x_{i}x_{k}}
    \big] dt
    - 2 \rho\cdot \nabla p (f_{2} dt + g_{2} dW(t) )
    + d (2 \rho \cdot \nabla p \hat{p})
    - 2 \rho_{t} \cdot \nabla p \hat{p} dt
    \\[3mm]  
    \ns\ds \quad  +
    \operatorname{div}\rho \hat{p}^{2} dt
    - 2 \rho\cdot d\nabla p d \hat{p}
    - \operatorname{div} (\rho\hat{p}^{2}) dt
    - 2 \rho  \cdot\nabla g_{1} \hat{p} d W(t).
    \end{array}
\end{equation}
    Setting $\rho= \zeta $, integrating \cref{eq.2OrderIdentityF} in $ Q_{\tau} $, and taking expectation on $ \Omega $, we have 
    \begin{equation}
        \begin{aligned}
            & \mathbb{E} \int_{\Sigma_{\tau}} |\Delta p|^{2} d \Gamma dt \\
            &=  \mathbb{E} \int_{\Sigma_{\tau}} \sum_{i,k,l =1}^{n} \rho^{i} \nu^{i} p_{x_{k}x_{k}} p_{x_{l}x_{l}}  d\Gamma dt
            \\[1mm]
            &=2 \mathbb{E} \int_{G}  \rho\cdot \nabla p^{\tau} \hat{p}^{\tau} dx - 2\mathbb{E} \int_{G}  \rho\cdot \nabla p(0) \hat{p}(0) dx \\
            & \quad
            + \mathbb{E} \int_{Q_{\tau}} \big( 
             \operatorname{div}\rho \hat{p}^{2}
             -2 \rho\cdot \nabla p f_{2} 
             -2  \rho_{t}\cdot \nabla p \hat{p} 
             -2 \rho\cdot \nabla g_{1} g_{2}\big) dx dt
         \\
         & \quad 
         + \mathbb{E} \int_{Q_{\tau}} \sum_{i,k,l =1}^{n} \big( 
             \rho^{i}_{x_{i}}p_{x_{k}x_{k}} p_{x_{l}x_{l}}
              +  2 \rho^{i}_{x_{l}x_{k}} p_{x_{i}x_{k}} p_{x_{l}}
             + 2 \rho^{i}_{x_{l}} p_{x_{i}x_{k}} p_{x_{l} x_{k} }
            -  2 \rho^{i}_{x_{i}x_{k}} p_{x_{l}x_{k} } p_{x_{l}} 
             \\[2mm]
             & \qquad \qquad \qquad \quad ~~-  2 \rho^{i}_{x_{i}} p_{x_{l}x_{k} } p_{x_{l}x_{k}}
             +  2 \rho^{i}_{x_{l}x_{k}} p_{x_{l}x_{k} }p_{x_{i}}
             +  2 \rho^{i}_{x_{l}} p_{x_{l}x_{k}}  p_{x_{i}x_{k}}
         \big) dx dt
         \\[3mm]
         & \quad 
         + 2 \mathbb{E} \int_{\Sigma_{\tau}} \sum_{i,k,l =1}^{n} \big( 
                \rho^{i}_{x_{i}}  p_{x_{l}x_{k}} p_{x_{l}} \nu^{k}
             -   \rho^{i}_{x_{l}} p_{x_{i}x_{k} } p_{x_{l}} \nu^{k}
             +   \rho^{i}  p_{x_{l}x_{k} x_{k}} p_{x_{i}} \nu^{l}
             -  \rho^{i}_{x_{l}}  p_{x_{l}x_{k} } p_{x_{i}} \nu^{k}
         \big) d\Gamma dt,
         \end{aligned}
    \end{equation}

which, together with \cref{eq.1123f,11.23-eq1,eq.6.3.2F}, implies \cref{eq.6.3.2-1F}. Then we complete the proof.
\end{proof}

Next, we give a pointwise weighted identity,  which will play an important role in the proof of \cref{thm.ceII}.

We have the following fundamental identity.

\begin{theorem}\label{thm.fundamentalIndentity-3}
    Let $ v $ be an $ H^{4}(G) $-valued It\^o process and  $ \hat{v} $ be an $ H^{2}(G) $-valued It\^o process such that
    \begin{equation*}
        dv = (\hat{v}  + f_{1}) dt + g_{1} d W(t)
    \end{equation*}
    for some $ f_{1} \in L^{2}_{\mathbb{F}} (0,T;H^{2}(G)) $ and $ g_{1} \in L^{2}_{\mathbb{F}} (0,T;H^{4}(G) \cap H_{0}^{2}(G)). $
Let $\eta\in C^2(\dbR\times\dbR^n)$.    Set $ \theta=e^{\ell} $, $ \ell=s\xi $, $ \xi=e^{\lambda \eta} $, $ w=\theta v $,  and  $\hat{w} = \theta \hat{v} + \ell_{t} w$. Then, for any $ t\in[0,T] $ and a.e. $ (x,\omega) \in G \times \Omega $,
    \begin{align}
        \notag
        & 2 \theta I_{2} (  d \hat{v} + \Delta^{2} v dt )
        - 2 \operatorname{div} (V_{1}+V_{2}) dt
        \\[3mm] \label{eq.finalEquation-3}
        & =
        2 I_{2}^{2} dt
        + 2 I_{2}I_{3}
        + 2 (M_{1}+M_{2})dt
        + 2 \sum_{i,j,k,l=1}^n \Lambda^{ijkl}_{1} w_{x_{i}x_{j}}w_{x_{k}x_{l}}dt
        + 2 \sum_{i, j=1}^n \Lambda_{2}^{ij} w_{x_{i}} w_{x_{j}}dt
        \\ \notag
        & \quad
        + 2 \Lambda_{3} w^{2}dt
        + 2 \Lambda_{4}
        + 2 d(I_{2} \hat{w})
        - \sum_{i=1}^{n} ( \Phi_{1}^{i} \hat{w} d \Delta w ) _{x_{i}}.
    \end{align}
    Here
    \begin{equation}\label{eq.I-2}
    	I_{1} = \Delta^{2} w dt  + \Psi_{2}\Delta w dt + \sum_{i, j=1}^n  \Psi_{3} ^{ij} w_{x_{i}x_{j}} dt + \sum_{i=1}^n \Psi_{4}^{i}  w_{x_{i}} dt+ \sum_{i=1}^n \Psi_{5}^{i}  w_{x_{i}}  dt + \Psi_{6} w dt + d \hat{w},
    \end{equation}
    with
    \begin{equation}\label{eq.Ic-2}
    \begin{cases}
     \Psi_{2} = 2 s^{2}\lambda^{2}\xi^{2} |\nabla\eta|^{2},
    	\quad
    	\Psi_{3}^{ij} = 4 s^{2}\lambda^{2}\xi^{2} \eta_{x_{i}} \eta_{x_{j}},
    	\\ \ns\ds
    	\Psi_{4}^{i} = 12 s^{2}\lambda^{3}\xi^{2} |\nabla\eta|^{2} \eta_{x_{i}}  +4 s^{2} \lambda^{2} \xi^{2} \Delta \eta \eta_{x_{i}} ,
    	\\  
    	\ns\ds
    	\Psi_{5}^{i} =  \sum_{j=1}^n 8 s^{2}\lambda^{2}\xi^{2} \eta_{x_{i}x_{j}} \eta_{x_{j}},
    	\quad
    	\Psi_{6} = s ^{4}\lambda ^{4}\xi^{4} |\nabla \eta|^{4},
    \end{cases}\quad i,j=1,\cdots,n
\end{equation}
\begin{equation}\label{eq.I2-2}
 	I_{2}  =    \sum_{i=1}^n \Phi_{1}^{i} \Delta w_{x_{i}}  + \Phi_{2} \Delta w  + \sum_{i, j=1}^n \Phi_{3}^{ij}  w_{x_{i}x_{j}}  + \sum_{i=1}^n \Phi_{4} ^{i}  w_{x_{i}}  + \Phi_{5} w
 	,
 \end{equation}
 with
\begin{equation}\label{eq.I2c-2}
	\begin{cases}\ds \Phi_{1}^{i} = -4 s \lambda \xi  \eta_{x_{i}},
 	\quad
 	\Phi_{2} =  -2 s\lambda^{2}\xi |\nabla \eta|^{2} - 2s\lambda\xi  \Delta \eta -  \lambda,
 	\\ \ns\ds
 	\Phi_{3}^{ij} = 4 s\lambda\xi  \eta_{x_{i}x_{j}} - 4 s\lambda^{2}\xi \eta_{x_{i}} \eta_{x_{j}},\quad \Phi_{4}^{i} = -4 s^{3} \lambda^{3} \xi^{3} |\nabla\eta|^{2} \eta_{x_{i}} ,
 	\\ \ns\ds \Phi_{5} =
 	- 6 s^{3} \lambda ^{4}\xi^{3} |\nabla\eta|^{4}
 	- 12 s ^{3}\lambda ^{3}\xi^{3}  (\nabla^{2}\eta \nabla \eta \nabla \eta )\\
 	\ns\ds \quad\qquad
 	-2 s^{3} \lambda ^{3}\xi^{3} |\nabla\eta|^{2} \Delta \eta
 	- s ^{3}\lambda ^{\frac{7}{2}}\xi^{3} |\nabla\eta|^{4},
    \end{cases}  i,j=1,\cdots,n
\end{equation}
\begin{align}
	\notag
	I_{3} & = -8 s \lambda \xi \sum\limits_{i,j=1}^{n} \eta_{x_{i}x_{j}} w_{x_{i}x_{j}}dt -4\nabla  \Delta \ell \cdot \nabla w  dt + 4  (\nabla \ell \cdot \nabla \Delta \ell ) w dt
	+2 |\nabla^{2} \ell|^{2} wdt
	\\\label{eq.I3-2}
	& \quad
	-\Delta^{2} \ell wdt+|\Delta \ell|^{2} wdt+ 8 s^{3} \lambda^{3} \xi^{3}(\nabla^{2} \eta \nabla \eta \nabla \eta) wdt
	+ s ^{3}\lambda ^{\frac{7}{2}}\xi^{3} |\nabla\eta|^{4} w dt
	+ \lambda \Delta w dt
	\\[4mm] \notag
	& \quad
	- \ell_{t} \theta f_{1} dt
	- \ell_{t} \theta g_{1} dW(t)
	+ (\ell_{t}^{2}-\ell_{tt}) w dt
	- 2 \ell_{t} \hat{w} dt,
\end{align}
\begin{align*}
        V_{1} & =[ V_{1}^{1}, V_{1}^{2},\cdots, V_{1}^{n} ],
        \quad
        V_{2} = [ V_{2}^{1}, V_{2}^{2},\cdots, V_{2}^{n} ],
        \\
        V_{1}^{j} & =  \sum_{i,k=1}^n\Big[ \sum_{l=1}^n \Phi_{1}^{l} w_{x_{k}x_{k}x_{l}} w_{x_{i}x_{i}x_{j}}  -  \frac{1}{2} \sum_{l=1}^n  \Phi_{1}^{j} w_{x_{k}x_{k}x_{l}} w_{x_{i}x_{i}x_{l}}
            + \frac{1}{2}  \Psi_{2} \Phi_{1}^{j}w_{x_{i}x_{i}}w_{x_{k}x_{k}}
          \\
          &  \quad \quad \quad \ \
           + \sum_{l=1}^{n} \Psi_{3}^{ik}\Phi_{1}^{l} w_{x_{i}x_{k}}w_{x_{l}x_{j}}
           -  \frac{1}{2}  \sum_{l=1}^{n}\Psi_{3}^{ij}\Phi_{1}^{l} w_{x_{i}x_{k}}w_{x_{k}x_{l}}
            + \Phi_{4}^{k} w_{x_{i}x_{i}x_{j}} w_{x_{k}}
            - \Phi_{4}^{k} w_{x_{i}x_{j}} w_{x_{i}x_{k}}
            \\
            & \quad \quad \quad \ \
            + \frac{1}{2} \Phi_{4}^{j}w_{x_{i}x_{k}}^{2}
            +\Big ( \Psi_{2} \Phi_{4}^{k} \delta_{ij}
            - \frac{1}{2} \Psi_{2} \Phi_{4}^{j} \delta_{ik}
            - \frac{1}{2} \Psi_{6} \Phi_{1}^{j} \delta_{ik}
            + \Psi_{3}^{ij}\Phi_{4}^{k} \Big ) w_{x_{i}}w_{x_{k}}\Big],
        \\
        V_{2}^{j}  &=
        \sum_{i,l,r,m=1}^n \Theta_{1}^{ijlrm} w_{x_{i}x_{i}x_{l}}w_{x_{r}x_{m}}
        + \sum_{i,k,l,r=1}^n \Theta_{2}^{ijklr} w_{x_{i}x_{k}}w_{x_{l}x_{r}}
        + \sum_{i,k,l=1}^n \Theta_{3}^{ijkl} w_{x_{i}x_{k}}w_{x_{l}}
        \\
        & \quad
        + \sum_{i,k=1}^n \Theta_{4}^{ijk} w_{x_{i}}w_{x_{k}}
        + \sum_{i=1}^n \Theta_{5} w_{x_{i}x_{i}x_{j}}w
        + \sum_{i,k=1}^n \Theta_{6}^{ijk} w_{x_{i}x_{k}}w
        + \sum_{i=1}^n \Theta_{7}^{ij} w_{x_{i}}w
        +  \Theta_{8}^{j} w^{2}
        + \Theta_{9}^{j},
    \end{align*}
    \begin{align*}
        M_{1} & =
        8 s \lambda^{2}\xi   | \nabla \Delta w \cdot \nabla \eta  | ^{2}
        + 32 s^{3} \lambda^{4} \xi^{3}  | \nabla^{2} w  \nabla \eta \nabla \eta   |^{2}
        + 48 s^{3} \lambda^{3} \xi^{3} \nabla^{2} \eta  (\nabla^{2} w \nabla \eta )  ( \nabla^{2} w \nabla \eta  )
        \\
        & \quad
        + 16 s^{3} \lambda^{3} \xi^{3}  ( \nabla^{2} w \nabla \eta \nabla \eta  ) \sum\limits_{i,j=1}^{n} \eta_{x_{i}x_{j}} w_{x_{i}x_{j}}
        -16 s^{3} \lambda ^{4} \xi ^{3} |\nabla \eta|^{2}  | \nabla^{2} w \nabla \eta  | ^{2}
        \\
        & \quad +6 s^{3} \lambda^{4} \xi ^{3} |\nabla \eta |^{4} | \nabla^{2} w|^{2}
        + 4 s^{3} \lambda^{3} \xi ^{3}  ( \nabla^{2} \eta \nabla \eta \nabla \eta  ) |\nabla ^{2} w|^{2}
        + 2  s^{3} \lambda^{3} \xi ^{3} |\nabla \eta|^{2} \Delta \eta |\nabla^{2} w|^{2}
        \\
        & \quad+ 2 s^{3} \lambda^{4} \xi ^{3} |\nabla \eta |^{4} | \Delta w|^{2}
        - 4 s^{3} \lambda^{3} \xi ^{3}  ( \nabla^{2} \eta \nabla \eta \nabla \eta  ) |\Delta w|^{2}
        - 2  s^{3} \lambda^{3} \xi ^{3} |\nabla \eta|^{2} \Delta \eta |\Delta w|^{2}
        \\
        & \quad+ 40 s^{5} \lambda^{6} \xi^{5} |\nabla \eta|^{4} | \nabla w \cdot \nabla \eta|^{2}
        + 64 s^{5} \lambda ^{5} \xi^{5}  ( \nabla^{2} \eta \nabla \eta \nabla \eta  ) |\nabla w \cdot \nabla \eta|^{2}
        \\
        & \quad- 16 s^{5} \lambda^{6} \xi^{5} |\nabla \eta|^{6} |\nabla  w|^{2}
        + 8 s^{7} \lambda ^{8} \xi^{7} |\nabla \eta|^{8} w^{2}
        + \lambda |\nabla \Delta w|^{2}
        - s^{3} \lambda^{\frac{7}{2}} \xi^{3} |\nabla \eta|^{4}|\Delta w|^{2}
        \\
        & \quad
        + 4 s^{5} \lambda^{\frac{11}{2}} \xi^{5} |\nabla \eta|^{4}|\nabla w \cdot \nabla \eta|^{2}
        + 2 s^{5} \lambda^{\frac{11}{2}} \xi^{5} |\nabla \eta|^{6}|\nabla w|^{2}
        -  s^{7} \lambda^{\frac{15}{2}} \xi^{7} |\nabla \eta|^{8}w^{2},
    \end{align*}
    \begin{align*}
        M_{2} &=
        \biggl( -\frac{1}{2} \sum_{i=1}^{n}\Phi_{1x_{i}}^{i} + \Phi_{2} \biggr)|\nabla \hat{w}|^{2}
        + \sum_{i,j=1}^{n} \big(- \Phi_{1x_{j}}^{i}+\Phi_{3}^{ij}\big) \hat{w}_{x_{i}} \hat{w}_{x_{j}}
        \\
        &\quad
        + \frac{1}{2}  \biggl[  \sum_{i,j=1}^{n}
        \big(
            \Phi_{1x_{i}x_{j}x_{j}}^{i}
            -  \Phi_{2x_{i}x_{i}}  \delta_{ij}
            -  \Phi_{3x_{i}x_{j}}^{ij}
            +  \Phi_{4x_{i}}^{i} \delta_{ij}
        \big)
            - 2\Phi_{5}
            \biggr] \hat{w}^{2}
        \\
        & \quad
        - \sum_{i,k,j=1}^n \big( \Psi_{5}^{i} \Phi_{1}^{k} \big)_{x_{j}} w_{x_{i}} w_{x_{k}x_{j}}
        - 4 s^{2} \lambda^{3} \xi^{2} |\nabla ^{2} w \nabla \eta|
        - 2 s^{2} \lambda^{3} \xi^{2} |\nabla \eta|^{2}|\Delta w|^{2}\\
        & \quad
        +  s^{4} \lambda^{5} \xi^{4} |\nabla \eta|^{4}|\nabla w |^{2},
    \end{align*}
    \begin{align*}
    & \Lambda_{1}^{ijkl} =
    \Phi_{3x_{i}x_{j}}^{kl}
    +\Phi_{3x_{i}x_{l}}^{kj}
    +\sum_{r=1}^{n} \Big(
        \frac{1}{2} \Phi_{2x_{r}x_{r}} \delta_{ij}\delta_{kl}
        - \Phi_{3x_{j}x_{r}}^{kr} \delta_{il}
        - \Phi_{3x_{i}x_{r}}^{kr} \delta_{lj}
        +\frac{1}{2}\sum_{m=1}^{n} \Phi_{3x_{r}x_{m}}^{rm} \delta_{ik} \delta_{lj}
    \Big),
    \\
   & \Lambda_{2}^{ij}
        =
        \sum_{k=1}^n\Big [  - \Phi_{4 x_{i}x_{k}x_{k}}^{j}
     +  \frac{1}{2} \Phi_{4x_{i}x_{j}x_{k}}^{k}
     - 2\Phi_{5x_{i}x_{j}}
    + (\Psi_{2}\Phi_{3}^{jk} )_{x_{i}x_{k}}
    - \frac{1}{2}  (\Psi_{2}\Phi_{3}^{ij} )_{x_{k}x_{k}}
    \\
    &  \qquad \qquad \quad 
    - \sum_{l=1}^{n} \frac{1}{2}   (\Psi_{2}\Phi_{3}^{kl} \delta_{ij} )_{x_{k}x_{l}}
    +   (\Psi_{3}^{ik}\Phi_{2}  )_{x_{k}x_{j}}
    - \sum_{l=1}^{n} \frac{1}{2}  (\Psi_{3}^{kl}\Phi_{2} \delta_{ij} )_{x_{k}x_{l}}
    - \frac{1}{2}  (\Psi_{3}^{ij}\Phi_{2}  )_{x_{k}x_{k}}
    \\
    & \qquad \qquad \quad  
    + \sum_{l=1}^{n}  (\Psi_{3}^{ik}\Phi_{3}^{jl} )_{x_{k}x_{l}}
    - \sum_{l=1}^{n} \frac{1}{2}  (\Psi_{3}^{ij}\Phi_{3}^{kl} )_{x_{k}x_{l}}
    - \frac{1}{2}  (\Psi_{3}^{kl}\Phi_{3}^{ij} )_{x_{k}x_{l}}
    + \frac{1}{2}  (\Psi_{4}^{i}\Phi_{1}^{j} )_{x_{k}x_{k}}
    \\
    & \qquad \qquad \quad  
    +  (\Psi_{4}^{i}\Phi_{2} )_{x_{j}}
    + \frac{1}{2}  (\Psi_{4}^{k}\Phi_{2}  \delta_{ij} )_{x_{k}}
    -  (\Psi_{4}^{i}\Phi_{3}^{jk} )_{x_{k}}
    + \frac{1}{2}  (\Psi_{4}^{k}\Phi_{3}^{ij} )_{x_{k}}
    +  (\Psi_{5}^{i}\Phi_{2} )_{x_{j}}
    \\[2mm]
    & \qquad \qquad \quad   
    + \frac{1}{2}  (\Psi_{5}^{k}\Phi_{2}  \delta_{ij} )_{x_{k}}
    -  (\Psi_{5}^{i}\Phi_{3}^{jk} )_{x_{k}}
    + \frac{1}{2}  (\Psi_{5}^{k}\Phi_{3}^{ij} )_{x_{k}}
    \Big ], \\
      &   \Lambda_{3} =
        \sum_{i=1}^n \Big[  \sum_{j=1}^n \frac{1}{2} \Phi_{5x_{i}x_{i}x_{j}x_{j}}
        + \sum_{j=1}^n \frac{1}{2}   ( \Psi_{2} \Phi_{5}  )_{x_{j}x_{j}}
        + \sum_{j=1}^n \frac{1}{2} (\Psi_{3}^{ij} \Phi_{5})_{x_{i}x_{j}}
        - \frac{1}{2}   ( \Psi_{4} ^{i} \Phi_{5}  )_{x_{i}}
        - \frac{1}{2}   ( \Psi_{5} ^{i} \Phi_{5}  )_{x_{i}}
        \\
        &  \quad \quad \quad \quad
        - \sum_{j=1}^n \frac{1}{2} ( \Psi_{6} \Phi_{1}^{i}  )_{x_{i}x_{j}x_{j}}
        + \frac{1}{2}  ( \Psi_{6} \Phi_{2}  )_{x_{i}x_{i}}
         \Big]
        ,
        \\
        & \Lambda_{4} =
         \sum_{i,j =1}^{n}  \big[
                -\Phi_{1t}^{i} w_{x_{i}x_{j}x_{j}}
                + \big(
                    - \Phi_{2t} \delta_{ij}
                    - \Phi_{3t}^{ij}
                \big)w_{x_{i}x_{j}}
        \big] \hat{w} dt
        - \sum_{i=1}^{n} \Phi_{4t}^{i}
        w_{x_{i}} \hat{w} dt
        -\Phi_{5t} w \hat{w} dt
        \\
        &  \qquad
        + \sum_{i,j=1}^{n} \big(\Phi^{i}_{1x_{i}} \hat{w} + \Phi^{i}_{1} \hat{w}_{x_{i}} - \Phi_{2} \hat{w}  \delta_{ij}\big)  \big[ (\theta f_{1})_{x_{j}x_{j}} dt +  (\theta g_{1})_{x_{j}x_{j}} d W(t) \big]
        - \Phi^{i}_{5} \hat{w} \big( \theta f_{1} dt +  \theta g_{1} d W(t) \big)
        \\
        &   \qquad
        - \sum_{i,j=1}^{n} \Phi^{ij}_{3} \hat{w} \big[ (\theta f_{1})_{x_{i}x_{j}} dt +  (\theta g_{1})_{x_{i}x_{j}} d W(t) \big]
        - \sum_{i=1}^{n} \Phi^{i}_{4} \hat{w} \big[ (\theta f_{1})_{x_{i}} dt +  (\theta g_{1})_{x_{i}} d W(t) \big]
        \\
        & \qquad
        -  \sum_{i,j=1}^{n} \big(
                \Phi^{i}_{1} d w_{x_{i}x_{i}x_{j}}
            + \Phi_{2} \delta_{ij} d w_{x_{i}x_{i}}
            + \Phi_{3}^{ij} d w_{x_{i}x_{j}}
            + \Phi^{i}_{4} \delta_{ij}  d w_{x_{i}}\big)
        d \hat{w}
        + \Phi_{5} d w d \hat{w},
    \end{align*}
    and
    \begin{align*}
     &   \Theta_{1}^{ijklrm} =
       \Phi_{2} \delta_{lj}\delta_{rm}
     + \Phi_{3}^{rm}    \delta_{lj}
     - \Phi_{3}^{mj}    \delta_{lr}
     + \Phi_{3}^{lr}    \delta_{mj},                                                                                        \\
     &   \Theta_{2}^{ijklr} =
        -\frac{1}{2}\Phi_{2x_{j}} \delta_{ik}\delta_{lr}
        - \Phi_{3x_{i}}^{lr} \delta_{kj}
        + \Phi_{3x_{k}}^{rj} \delta_{il}
        + \sum_{m=1}^{n} \Phi_{3x_{m}}^{rm} \delta_{kj}\delta_{il}
        - \frac{1}{2} \sum_{m=1}^{n} \Phi_{3x_{m}}^{jm} \delta_{il} \delta_{kr}
        - \Phi_{3x_{i}}^{kl} \delta_{rj}
        ,
        \\
      &   \Theta_{3}^{ijkl} =
        - \Phi_{4x_{i}}^{l}\delta_{kj}
        - \Phi_{5}\delta_{ik}\delta_{lj}
        + \Psi_{2}\Phi_{3}^{ik}\delta_{lj}
        -\Psi_{2}\Phi_{3}^{ij}\delta_{kl}
        + \Psi_{3}\Phi_{2}\delta_{ik}
        -\Psi_{3}\Phi_{2}\delta_{ij}
        +\Psi_{3}^{ik}\Phi_{3}^{lj}
        \\[2mm]
        & \qquad\quad\;
        -\Psi_{3}^{ij}\Phi_{3}^{kl}
        +\Psi_{4}^{l}\Phi_{1}^{k}\delta_{ij}
        +\Psi_{5}^{l}\Phi_{1}^{k}\delta_{ij},
        \\
       &   \Theta_{4}^{ijk}=
        \Phi_{4x_{i}x_{j}}^{k}
         -  \frac{1}{2}\Phi_{4x_{i}x_{k}}^{j}
         + 2 \Phi_{5x_{i}}\delta_{kj}-\Phi_{5x_{j}}\delta_{ik}
         -  ( \Psi_{2}\Phi_{3}^{jk}  )_{x_{i}}
         + \frac{1}{2}  ( \Psi_{2}\Phi_{3}^{ik}  )_{x_{j}}
        \\
        & \qquad\quad 
        + \sum_{l=1}^{n} \frac{1}{2}  ( \Psi_{2}\Phi_{3}^{jl})_{x_{l}} \delta_{ik}
        - \sum_{l=1}^{n}  ( \Psi_{3}^{il}\Phi_{2}  )_{x_{l}}\delta_{kj}
        +  \frac{1}{2} \sum_{l=1}^{n}  ( \Psi_{3}^{lj}\Phi_{2}  )_{x_{l}}\delta_{ik}
         +  \frac{1}{2}   ( \Psi_{3}^{ik}\Phi_{2}  )_{x_{j}}
        \\
        & \qquad\quad 
        -  \sum_{l=1}^{n}  ( \Psi_{3}^{ij}\Phi_{3}^{kl}  )_{x_{l}}
        + \frac{1}{2}  \sum_{l=1}^{n}  ( \Psi_{3}^{ik}\Phi_{3}^{jl}  )_{x_{l}}
        + \frac{1}{2}  \sum_{l=1}^{n}  ( \Psi_{3}^{lj}\Phi_{3}^{ik}  )_{x_{l}}
        - \frac{1}{2}    ( \Psi_{4}^{i}\Phi_{1}^{k}  )_{x_{j}}
        +  \Psi_{4}^{i} \Phi_{2} \delta_{kj} 
        \\
        & \qquad\quad - \frac{1}{2}\Psi_{4}^{j} \Phi_{2} \delta_{ik}
        + \Psi_{4}^{i}\Phi_{3}^{kj}
        -\frac{1}{2} \Psi_{4}^{j}\Phi_{3}^{ik}
        +  \Psi_{5}^{i} \Phi_{2} \delta_{kj}
        - \frac{1}{2}\Psi_{5}^{j} \Phi_{2} \delta_{ik}
        + \Psi_{5}^{i}\Phi_{3}^{kj}
        -\frac{1}{2} \Psi_{5}^{j}\Phi_{3}^{ik},
    \\ \vspace{2mm}
      &   \Theta_{5}= \Phi_{5},
        \\ \vspace{2mm}
      &   \Theta_{6}^{ijk}= - \Phi_{5x_{j}}\delta_{ik}+ \Psi_{6}\Phi_{1}^{k}\delta_{ij},
        \\
      &   \Theta_{7}^{ij} = \sum_{k=1}^{n} \Phi_{5x_{k}x_{k}}\delta_{ij} + \Psi_{2} \Phi_{5} \delta_{ij} + \Psi_{3}^{ij}\Phi_{5} -  ( \Psi_{6}\Phi_{1}^{i}  )_{x_{j}} + \Psi_{6}\Phi_{2} \delta_{ij} + \Psi_{6}\Phi_{3}^{ij},
        \\
      &   \Theta_{8}^{j} = - \frac{1}{2} \sum_{i=1}^n \Phi_{5x_{i}x_{i}x_{j}} - \frac{1}{2} ( \Psi_{2}\Phi_{5}  )_{x_{j}} - \frac{1}{2} \sum_{i=1}^n  ( \Psi_{3}^{ij} \Phi_{5}  )_{x_{i}} + \frac{1}{2} \Psi_{4}^{j}\Phi_{5} + \frac{1}{2} \Psi_{5}^{j}\Phi_{5} 
      \\
      & \quad \quad \ +\frac{1}{2}\sum_{i=1}^n  ( \Psi_{6}\Phi_{1}^{j}  )_{x_{i}x_{i}}
        -\frac{1}{2}    (\Psi_{6}\Phi_{2} )_{x_{j}}
        - \frac{1}{2}\sum_{i=1}^n  ( \Psi_{6}\Phi_{3}^{ij}  )_{x_{i}}
        +\frac{1}{2} \Psi_{6} \Phi_{4}^{j}.
        \\
        & \Theta_{9}^{j}
        =
        \sum_{i=1}^{n} \biggl(
            \Phi^{i}_{1 x_{i}} \hat{w}_{x_{j}} \hat{w}
            + \Phi_{1}^{i} \hat{w}_{x_{i}} \hat{w}_{x_{j}}
            - \frac{1}{2} \Phi_{1 x_{i}x_{j}}^{i} \hat{w}^{2}
            - \frac{1}{2} \Phi_{1}^{j} \hat{w}_{x_{i}}^{2}
            - \Phi_{3}^{ij} \hat{w} \hat{w}_{x_{i}}
            + \frac{1}{2} \Phi_{3x_{i}}^{ij} \hat{w}^{2}
        \biggr)
        \\
        & \qquad ~-\Phi_{2} \hat{w} \hat{w}_{x_{j}}
        + \frac{1}{2} \Phi_{2x_{j}} \hat{w}^{2}
        - \frac{1}{2} \Phi_{4}^{j} \hat{w}^{2}.
    \end{align*}
\end{theorem}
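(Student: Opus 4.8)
The plan is to read \cref{eq.finalEquation-3} as a purely pointwise (pathwise) differential identity obtained by conjugating the operator $d\hat v+\Delta^2 v\,dt$ with the weight $\theta=e^{\ell}$ and then forming a suitable ``energy'' product with $I_2$. First I would pass from $(v,\hat v)$ to the conjugated unknowns $w=\theta v$ and $\hat w=\theta\hat v+\ell_t w$. Since $\theta$ is deterministic, It\^o's formula gives $dw=\hat w\,dt+\theta f_1\,dt+\theta g_1\,dW(t)$, and a direct computation of $\theta\,d\hat v$ using $\theta\hat v=\hat w-\ell_t w$ produces exactly the temporal contributions $-\ell_t\theta f_1\,dt-\ell_t\theta g_1\,dW(t)+(\ell_t^2-\ell_{tt})w\,dt-2\ell_t\hat w\,dt$ appearing in $I_3$. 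This reduces the whole matter to the deterministic conjugation of the biharmonic operator.

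Next I would expand $\theta\Delta^2(\theta^{-1}w)$ completely. Using $\nabla\ell=s\lambda\xi\nabla\eta$ and $\nabla(\theta^{-1}w)=\theta^{-1}(\nabla w-w\nabla\ell)$ iterated up to fourth order, every derivative hitting $\xi=e^{\lambda\eta}$ lowers a factor of $\lambda$ while the power of $s$ tracks the total order, so the expansion organizes itself by homogeneity in $(s,\lambda)$. I would then split it into a formally self-adjoint and a formally skew-adjoint part: the even-order pieces (leading term $\Delta^2 w$), together with $d\hat w$, give $I_1$ with coefficients $\Psi_2,\dots,\Psi_6$; the odd-order pieces (leading term $\Phi_1^i\Delta w_{x_i}$) give $I_2$ with coefficients $\Phi_1^i,\dots,\Phi_5$; and the residual lower-order terms, together with the deliberately inserted corrections such as $\lambda\Delta w$ and the half-integer-power term $s^3\lambda^{7/2}\xi^3|\nabla\eta|^4 w$, are collected into $I_3$. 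Thus, by construction, $\theta(d\hat v+\Delta^2 v\,dt)=I_1+I_2\,dt+I_3$.

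The core of the argument is then to evaluate $2\theta I_2(d\hat v+\Delta^2 v\,dt)=2I_2 I_1+2I_2^2\,dt+2I_2 I_3$ and to rewrite the single nontrivial piece $2I_2 I_1$ as a divergence plus positive and lower-order terms. Writing $I_1=(\text{self-adjoint spatial operator})\,dt+d\hat w$, the deterministic product is handled by the Leibniz rule (the pointwise analogue of integration by parts): for each pairing of a term of $I_2$ with a term of the self-adjoint operator I would shift derivatives so as to symmetrize, peeling off an exact divergence that accumulates into $V_1+V_2$ (with coefficients $\Theta_1^{ijlrm},\dots,\Theta_9^{j}$), isolating the principal positive quadratic form $M_1$ in $\nabla\Delta w,\nabla^2 w,\nabla w,w$, and relegating everything else to the quadratic remainders $\Lambda_1^{ijkl},\Lambda_2^{ij},\Lambda_3$. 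For instance, the representative pairing of $\Delta^2 w$ with $\sum_i\Phi_1^i\Delta w_{x_i}$, after recasting $\Phi_1^i(\Delta w)_{x_i}\,\Delta\!\,\Delta w$ via the product rule, yields precisely the dominant term $8s\lambda^2\xi|\nabla\Delta w\cdot\nabla\eta|^2$ of $M_1$. The stochastic piece $2I_2\,d\hat w$ is treated by the It\^o product rule as $2d(I_2\hat w)-2\hat w\,dI_2-2\,dI_2\,d\hat w$; since the $\Phi$-coefficients depend on $t$ through $\xi$, this generates the $\Phi_{(\cdot)t}$ terms and, through the covariation $dw\,d\hat w$, the It\^o corrections, all of which are packaged into $\Lambda_4$, while the genuinely quadratic-in-$\hat w$ contributions form $M_2$ and the boundary term $-\sum_i(\Phi_1^i\hat w\,d\Delta w)_{x_i}$.

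The main obstacle is the sheer scale and bookkeeping rather than any single conceptual difficulty: the product of a fourth-order operator $I_1$ with a third-order operator $I_2$ produces derivative products up to seventh order, and organizing the resulting large collection of terms into the precise coefficients $\Lambda_{1}$--$\Lambda_{4}$, $M_{1}$, $M_{2}$, $V_{1}$, $V_{2}$ and $\Theta_{1}$--$\Theta_{9}$ demands meticulous tracking of symmetrizations and of the powers of $s$, $\lambda$, $\xi$. Care is also needed in the stochastic terms to retain the quadratic-variation corrections correctly, so that $2I_2^2\,dt$ and the covariation contributions to $\Lambda_4$ come out exactly; once the self-/skew-adjoint splitting is fixed, however, these are mechanical. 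I would finally verify the identity by checking homogeneity degrees in $(s,\lambda,\xi)$ term by term, which both guards against algebraic slips and confirms that $M_1$ is the dominant positive form for large $s$ and $\lambda$.
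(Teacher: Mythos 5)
Your proposal is correct and follows essentially the same route as the paper: decompose $\theta(d\hat v+\Delta^2 v\,dt)=I_1+I_2\,dt+I_3$ by conjugating with $\theta$ (with the temporal and stochastic corrections absorbed into $I_3$), then expand $2I_2I_1$ pairing-by-pairing via the pointwise Leibniz rule to peel off the divergences $V_1+V_2$, the positive forms $M_1,M_2$, and the remainders $\Lambda_1$--$\Lambda_4$, handling $2I_2\,d\hat w$ with the It\^o product rule exactly as you describe. The only difference is organizational: the paper computes the products as $\sum_{i,j}I_{ij}$ and imports most of them from the appendix of an earlier work, writing out explicitly only $I_{13}$ and the $d\hat w$-pairings $I_{71},\dots,I_{75}$, whereas you would carry out the full bookkeeping directly.
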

\begin{remark} 
Since we do not put any further assumptions on $v$ and $\hat v$, the identity \eqref{eq.finalEquation-3} seems to be very complicated. For solutions to \eqref{eq.reference} or \eqref{eq.dualSCB1},  many terms, such as $V_1$ and $V_2$, will merge or vanish by means of the boundary
conditions. Furthermore, compared with energy terms, such as $8 s \lambda^{2}\xi| \nabla \Delta w \cdot \nabla \eta  | ^{2}$, $2 s^{3} \lambda^{4} \xi ^{3} |\nabla \eta |^{4} | \Delta w|^{2}$, $40 s^{5} \lambda^{6} \xi^{5} |\nabla \eta|^{4} | \nabla w \cdot \nabla \eta|^{2}$, $\lambda |\nabla \Delta w|^{2}$, many terms in \eqref{eq.finalEquation-3} is of no great importance. We only need to estimate their orders with respect to $s$ and $\lambda$. Hence, an effective way to simplify \eqref{eq.finalEquation-3} is that we do not write these terms explicitly and just  claim that the order of $s$ and $\lambda$ for them are lower than the terms with a ``good sign". However, we do not do this since we want to provide the full details for readers, and particularly for beginners. 
\end{remark} 

Although the form
of the identity \eqref{eq.finalEquation-3} is very complex, its proof follows from some basic computations. To avoid defocusing the main theme of this paper, we put the proof of Theorem \ref{thm.fundamentalIndentity-3} in the appendix.

\section{Carleman estimate for the adjoint equation}\label{sc.3}

This section is devoted to establishing a Carleman estimate for a backward stochastic plate equation. To this end, we first introduce the weight function $\eta$.

For any $ \delta >0 , T>0 $ and $ 0< \varepsilon_{1} < \frac{1}{2} $, 
we choose $ x_{0}  \in  \mathbb{R}^{n} \backslash \overline{G} $ such that 
\begin{equation} \label{eq.e2}
	R_{0} \deq \min_{x \in \overline{G}} |x - x_{0}|^{2} > 2 \delta,
\end{equation}
and choose sufficiently large $ \beta $  satisfing
\begin{equation}\label{eq.e3}
R_{1} \deq \max_{x \in \overline{G}} |x - x_{0}|^{2} \leq \beta \varepsilon_{1}^{2} T^{2} - \delta.
\end{equation}
We also choose sufficiently small $ \varepsilon_{0} $ with $ 0< \varepsilon_{0} < \varepsilon_{1}  $ such that
\begin{equation}\label{eq.e4}
	R_{0} - \beta \varepsilon_{0}^{2} T^{2} \geq  \delta.
\end{equation}

Let
\begin{equation}\label{eq.xiEta}
\eta(t,x)=|x-x_{0}|^{2} - \beta \bigg(t-\frac{T}{2}\bigg)^{2}.
\end{equation}
From \cref{eq.xiEta,eq.e2,eq.e3,eq.e4}, it is easy to see that $\eta$ satisfies the following conditions.

\smallskip

\begin{condition}
    \label{prop.eta}
    \quad\vspace{-1mm}

    \begin{enumerate}[(1).]
        \item $ |\eta(t,x)|_{C^{2}(\overline{Q})} \leq C_{1} $.

        \item $ |\nabla \eta(t,x) | \geq C_{2} > 0, \quad \forall ~ (t,x) \in  \overline{Q} $.

        \item For all $ (t,x) $ in  $ J_{1}\deq[ ( 0 , T/2 - \varepsilon_{1} T ) \cup ( T/2+ \varepsilon_{1} T, T ) ] \times G, $
        it holds that $ \eta(t,x) \leq  - \delta $.

        \item For all $ (t,x) $ in $ J_{2}\deq( T/2 - \varepsilon_{0} T , T/2 + \varepsilon_{0} T )  \times G, $
        it holds that
        $ \eta(t,x) \geq \delta $.
    \end{enumerate}
\end{condition}

Recall that $ \theta=e^{\ell} $, $ \ell=s\xi $ and $ \xi=e^{\lambda \eta} $. With $\eta$ given by \eqref{eq.xiEta}, the functions $\ell$ and $\theta$ are also defined.

We also need the following known result.
\begin{lemma}\label{lemma.ellipticE-2}\cite[Theorem 2.1]{fu2019carleman}
	Let $ q \in H^{2}_{0}(G) $.
	Then there exists a constant $C>0$ independent of $ s $ and $\lambda$, and parameter $\widehat{\lambda}>1$ and $ \widehat{s} > 1 $ such that, for all $\lambda \geq \widehat{\lambda}$ and $ s \geq \widehat{s}$,
	\begin{align}
		\notag
		s^{4}  \lambda^{6}  \int_{Q}  \xi^{4}  \theta^{2} \big(
		s^{2}  \lambda^{2} \xi^{2}|q|^{2}
		+| \nabla q|^{2}
		\big)dx dt
		\leq
		C   \int_{Q} s^{3}  \lambda^{4} \xi^{3} \theta^{2} |\Delta q|^{2} dx dt.
	\end{align}
\end{lemma}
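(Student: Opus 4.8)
The plan is to establish \cref{lemma.ellipticE-2} by the classical conjugation-and-splitting method for elliptic Carleman estimates, exploiting that for each fixed $t$ the function $\eta(t,\cdot)$ has no critical point on $\overline{G}$ (the lower bound $|\nabla\eta|\ge C_{2}>0$ in part (2) of \cref{prop.eta}). Since $q$ carries no time derivative and $t$ enters only through the smooth weights $\theta,\xi$, it suffices to prove the purely spatial estimate on $G$ for each frozen $t$, with constants uniform in $t$ (uniformity being guaranteed by parts (1)--(2) of \cref{prop.eta}), and then integrate over $(0,T)$. First I would set $w=\theta q$ and compute the conjugated operator $\theta\Delta(\theta^{-1}w)=\Delta w-2\nabla\ell\cdot\nabla w+(|\nabla\ell|^{2}-\Delta\ell)w=:P_{1}w+P_{2}w$, where $P_{1}w=\Delta w+|\nabla\ell|^{2}w$ is formally self-adjoint and $P_{2}w=-2\nabla\ell\cdot\nabla w-\Delta\ell\,w$ is formally skew-adjoint on $L^{2}(G)$; with this choice the splitting has no remainder. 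Expanding $\|\theta\Delta q\|_{L^{2}(G)}^{2}=\|P_{1}w\|^{2}+\|P_{2}w\|^{2}+2(P_{1}w,P_{2}w)$ then isolates the cross term as the only source of positivity.

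The heart of the matter is the computation of $2(P_{1}w,P_{2}w)$. Integrating by parts and using $q\in H^{2}_{0}(G)$ (so that $w=0$ and $\nabla w=0$ on $\Gamma$, which annihilates every boundary integral), the four resulting pieces combine so that the spurious top-order contributions in $|\nabla\eta|^{2}|\nabla w|^{2}$ cancel, and the surviving leading terms are $2s^{3}\lambda^{4}\int_{G}\xi^{3}|\nabla\eta|^{4}w^{2}\,dx$ together with $2s\lambda^{2}\int_{G}\xi(\nabla\eta\cdot\nabla w)^{2}\,dx$; all remaining terms carry either a lower power of $\lambda$ (those generated by $\nabla^{2}\eta$ and $\Delta\eta$) or a lower power of $s$. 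The gradient term, degenerate in the direction $\nabla\eta$ at this stage, is upgraded to the full $s\lambda^{2}\int_{G}\xi\theta^{2}|\nabla w|^{2}$ by one further integration by parts, writing $\int_{G}\xi|\nabla w|^{2}=-\int_{G}\xi w\,\Delta w-\int_{G}w\,\nabla\xi\cdot\nabla w$ and substituting $\Delta w=P_{1}w-|\nabla\ell|^{2}w$, which costs only a small multiple of $\|P_{1}w\|^{2}$ (retained from the squares) plus terms absorbable by the $w^{2}$ term thanks to $|\nabla\eta|\ge C_{2}$. Choosing $\lambda\ge\widehat{\lambda}$ large so the $|\nabla\eta|^{2}$-terms dominate the Hessian terms, and then $s\ge\widehat{s}$ large to swallow the lower $s$-powers, yields the sharp elliptic estimate
\[
s^{3}\lambda^{4}\int_{G}\xi^{3}\theta^{2}q^{2}\,dx + s\lambda^{2}\int_{G}\xi\theta^{2}|\nabla q|^{2}\,dx \le C\int_{G}\theta^{2}|\Delta q|^{2}\,dx ,
\]
where I convert back to $q$ via $\nabla w=\theta(\nabla q+s\lambda\xi\,q\,\nabla\eta)$ and absorb the resulting $q$-cross term into the left-hand $w^{2}$ integral.

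Finally, \cref{lemma.ellipticE-2} follows by running the identical argument with the boosted weight $\tilde{\theta}:=s^{3/2}\lambda^{2}\xi^{3/2}\theta$, i.e.\ with exponent $\tilde{\ell}=s\xi+\tfrac{3}{2}\lambda\eta$. The extra summand $\tfrac{3}{2}\lambda\eta$ changes $\nabla\tilde{\ell}=\lambda(s\xi+\tfrac{3}{2})\nabla\eta$ only by a quantity that is of relative order $O(1/s)$ compared with $s\lambda\xi\nabla\eta$, so it perturbs none of the leading positive terms above, and the harmless multiplicative constant $s^{3}\lambda^{4}$ scales both sides equally and cancels. Substituting $\tilde{\theta}^{2}=s^{3}\lambda^{4}\xi^{3}\theta^{2}$ into the displayed sharp estimate reproduces exactly the weights $s^{6}\lambda^{8}\xi^{6}$ for $q^{2}$, $s^{4}\lambda^{6}\xi^{4}$ for $|\nabla q|^{2}$, and $s^{3}\lambda^{4}\xi^{3}$ for $|\Delta q|^{2}$, which is the claimed inequality after integrating in $t$. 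I expect the main obstacle to be the bookkeeping in the cross-term computation — organizing the integrations by parts so that the unwanted top-order terms cancel and verifying that the lower-order remainders are genuinely absorbable once $\lambda$ and $s$ are large — rather than any conceptual difficulty; the only point requiring care beyond the standard estimate is checking that the $\tfrac{3}{2}\lambda\eta$ modification of the weight does not spoil positivity, which it does not because it is subordinate in $s$.
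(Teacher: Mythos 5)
The paper offers no proof of \cref{lemma.ellipticE-2} at all: it is imported verbatim from \cite[Theorem 2.1]{fu2019carleman}, so there is no internal argument to compare yours against. Your blind proof is therefore a self-contained substitute, and its architecture is the standard one and is sound: the conjugation $w=\theta q$, the exact splitting of $\theta\Delta\theta^{-1}$ into $P_{1}w=\Delta w+|\nabla\ell|^{2}w$ and $P_{2}w=-2\nabla\ell\cdot\nabla w-\Delta\ell\,w$, the cross-term identity $2(P_{1}w,P_{2}w)=4\int_{G}\nabla^{2}\ell\,\nabla w\,\nabla w+2\int_{G}\nabla(|\nabla\ell|^{2})\cdot\nabla\ell\,w^{2}-\int_{G}\Delta^{2}\ell\,w^{2}$ (all boundary terms vanishing because $w=\nabla w=0$ on $\Gamma$), the recovery of the full gradient at the cost of a small multiple of $\|P_{1}w\|^{2}$, and finally the reweighting $\tilde\theta=s^{3/2}\lambda^{2}\xi^{3/2}\theta$. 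The exponent bookkeeping is right: the stated inequality is exactly the classical estimate multiplied pointwise by $s^{3}\lambda^{4}\xi^{3}$, and since that multiplier is a function of $(t,x)$ the multiplication can only be realized by rerunning the machinery with $\tilde\ell=s\xi+\tfrac{3}{2}\lambda\eta$, as you do; the extra Hessian contribution $\tfrac{3}{2}\lambda\nabla^{2}\eta=3\lambda I$ is even a help rather than a hindrance.

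The one place where your write-up is too casual is the size of the subordinate terms. The perturbation $\tfrac{3}{2}\lambda\eta$ is of relative order $O\big(1/(s\xi)\big)$ against $s\xi$, not $O(1/s)$, and the same ratio $1/(s\xi)$ (squared) governs the absorption of the genuinely dangerous remainder $-\int_{G}\Delta^{2}\tilde\ell\,w^{2}\sim-s\lambda^{4}\int_{G}\xi|\nabla\eta|^{4}w^{2}$ into the main term $s^{3}\lambda^{4}\int_{G}\xi^{3}|\nabla\eta|^{4}w^{2}$. Since $\eta$ in \cref{eq.xiEta} is negative near $t=0$ and $t=T$, one only has $\xi\ge e^{-\lambda|\eta|_{C(\overline Q)}}$, so making $s\xi$ uniformly large on $Q$ forces $s\ge Ce^{C\lambda}$; in other words $\widehat s$ must be allowed to depend on $\lambda$. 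This matches the way the lemma is actually invoked in the proof of \cref{thm.ceII}, where $s\ge s_{0}(\lambda)$, so it does not invalidate anything, but the phrase ``subordinate in $s$'' silently assumes $\xi\gtrsim 1$, which fails on part of $Q$. With that dependence stated explicitly, your argument is complete.
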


We have the following Carleman estimate.

\begin{theorem} \label{thm.ceII}
    There exist constants $ C>0$ and $\lambda_{0} >0 $ such that for all $\lambda   \geq \lambda_{0}$, one can find $ s_{0}= s_{0}(\lambda) > 0 $ so that for any $ s \geq s_{0} $,
    \begin{equation*}
    (v, \hat{v})  \in L^{2}_{\mathbb{F}} ( \Omega; C([0,T]; H^{4}(G) \cap H_{0}^{2}(G)) ) \times  L^{2}_{\mathbb{F}} ( \Omega; C([0,T];  H_{0}^{2}(G)) )
    ,
    \end{equation*}
    and
    \begin{equation*}
    f_{1}, f_{2}, g_{2} \in L^{2}_{\mathbb{F}} (0,T;H_{0}^{2}(G)), \quad
    g_{1} \in L^{2}_{\mathbb{F}} (0,T;H^{4}(G) \cap H_{0}^{2}(G))
    \end{equation*}
    satisfying
    \begin{equation}\label{eq.carlemanEQII}
    \left\{
    \begin{array}{ll}
    \ds d v =(\hat{v} + f_{1}) d t+ g_{1} d W(t) & \text { in } Q, \\
    \ns\ds d \hat{v} +\Delta^2 v d t=f_{2} d t+ g_{2} d W(t) & \text { in } Q, \\
    \ns\ds v= \frac{\partial v}{ \partial \nu}=0 & \text { on } \Sigma,
    \end{array}
    \right.
    \end{equation}
    and
    \begin{equation}\label{eq.hT}
    v(0,\cdot)=v(T,\cdot)=\hat{v}(0,\cdot)=\hat{v}(T,\cdot)=0 \quad  \mbox{ in } G, \quad \text{$ \mathbb{P} $-\rm a.s.},
    \end{equation}
    it holds that
    \begin{align}\label{eq.08012022}
    \notag
    &
    \mathbb{E} \int_{Q} \theta^{2} \bigl(
    s \lambda \xi |\nabla \hat{v}|^{2}
    + s^{3} \lambda^{\frac{7}{2}} \xi^{3} | \hat{v}|^{2}
    +  \lambda |\nabla \Delta v|^{2}
    + s^{2} \lambda^{4} \xi^{2} |\nabla^{2} v|^{2}
    + s^{3} \lambda^{4} \xi^{3} |\Delta  v|^{2}
    \\\notag
    & \qquad \quad
    + s^{4} \lambda^{6} \xi^{4} |\nabla  v|^{2}
    + s^{6} \lambda^{8} \xi^{6} |v|^{2}
    \bigr) dxdt
    \\
    &
    \leq
    C \mathbb{E} \int_{Q} \theta^{2} \big(
    s^{6} \lambda^{6} \xi^{6} f_{1}^{2}
    + s^{4} \lambda^{4} \xi^{4} |\nabla f_{1}|^{2}
    + s^{2} \lambda^{2} \xi^{2} |\nabla^{2} f_{1}|^{2}
    + f_{2}^{2}
    +s^{6} \lambda^{6} \xi^{6} g_{1}^{2}
    \\ \notag
    & \qquad \qquad \quad ~
    +s^{4} \lambda^{4} \xi^{4} |\nabla g_{1}|^{2}
    +s^{2} \lambda^{2} \xi^{2} |\nabla^{2} g_{1}|^{2}
    +|\nabla \Delta g_{1}|^{2}
    +s^{2} \lambda^{2} \xi^{2} |g_{2}|^{2}
    \big) dxdt
    \\ \notag
    & \quad
    + C \mathbb{E} \int_{\Sigma} \theta^{2} \bigr(
    s \lambda \xi |\nabla \Delta v|^{2}
    + s^{3} \lambda^{3} \xi^{3} |\Delta v|^{2}
    \bigl)d\Gamma dt.
    \end{align}
\end{theorem}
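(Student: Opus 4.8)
The plan is to extract \eqref{eq.08012022} from the pointwise weighted identity \eqref{eq.finalEquation-3} of \cref{thm.fundamentalIndentity-3}. First I would apply that identity to the solution $(v,\hat v)$ of \eqref{eq.carlemanEQII}; since $d\hat v+\Delta^2v\,dt=f_2\,dt+g_2\,dW(t)$, the left member of \eqref{eq.finalEquation-3} becomes $2\theta I_2\big(f_2\,dt+g_2\,dW(t)\big)-2\operatorname{div}(V_1+V_2)\,dt$. Integrating over $Q$ and taking expectation then produces three simplifications: every It\^o integral against $dW(t)$ (coming from $2\theta I_2 g_2\,dW$, from the diffusion parts of $I_3$ and $\Lambda_4$, and from $2d(I_2\hat w)$) vanishes in expectation, so only drift parts remain; the exact time-differential $2d(I_2\hat w)$ contributes $2\,\mathbb E[I_2\hat w]_{t=0}^{t=T}$, which is zero by \eqref{eq.hT}; and the two spatial divergences $2\operatorname{div}(V_1+V_2)$ and $\sum_i(\Phi_1^i\hat w\,d\Delta w)_{x_i}$ turn into boundary integrals over $\Sigma$.

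Next I would use the boundary conditions to discard most of these boundary integrals. Since $v=\partial v/\partial\nu=0$ on $\Sigma$ for every $t$ and $f_1,g_1\in H^2_0(G)$ vanish on $\Gamma$, the first line of \eqref{eq.carlemanEQII} forces $\hat v=0$ on $\Sigma$ as well; hence $w=\theta v$, $\nabla w$, and $\hat w=\theta\hat v+\ell_t w$ all vanish on $\Sigma$. Consulting the expressions for $V_1,V_2$ and for $\Phi_1^i\hat w\,d\Delta w$, every boundary term then drops except those carrying a trace of $\Delta w$ or of $\partial\Delta w/\partial\nu$. Rewriting these through $w=\theta v$ and $|\nabla\theta|\le Cs\lambda\xi\,\theta$, they are bounded by $C\,\mathbb E\int_\Sigma\theta^2\big(s\lambda\xi|\nabla\Delta v|^2+s^3\lambda^3\xi^3|\Delta v|^2\big)d\Gamma\,dt$, i.e.\ precisely the boundary term on the right of \eqref{eq.08012022}.

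What survives is an identity whose left side is $\mathbb E\int_Q\big[\,2I_2^2+2I_2I_3+2(M_1+M_2)+(\Lambda_1,\dots,\Lambda_4\text{ terms})\,\big]dx\,dt$ (drift parts only) and whose right side is $\mathbb E\int_Q 2\theta I_2 f_2\,dx\,dt$ minus the boundary term above. The heart of the proof is to show that $2\,\mathbb E\int_Q(M_1+M_2)\,dx\,dt$ bounds below the whole weighted energy of \eqref{eq.08012022} written in $w,\hat w$, with $M_1$ supplying the $v$-energy and $M_2$ the $\hat v$-energy. The positive leading pieces of $M_1$, carrying integer powers of $\lambda$ (e.g.\ $8s\lambda^2\xi|\nabla\Delta w\cdot\nabla\eta|^2$, $\lambda|\nabla\Delta w|^2$, $6s^3\lambda^4\xi^3|\nabla\eta|^4|\nabla^2w|^2$, $2s^3\lambda^4\xi^3|\nabla\eta|^4|\Delta w|^2$, $40s^5\lambda^6\xi^5|\nabla\eta|^4|\nabla w\cdot\nabla\eta|^2$ and $8s^7\lambda^8\xi^7|\nabla\eta|^8w^2$), control the corresponding $v$-energy thanks to \cref{prop.eta}, in particular $|\nabla\eta|\ge C_2>0$; in $M_2$ the leading $s\lambda^2\xi$ contributions to the $|\nabla\hat w|^2$ coefficient cancel by the structure of $\Phi_1,\Phi_2$, leaving the sharp coefficients $s\lambda\xi|\nabla\hat w|^2$ and $s^3\lambda^{7/2}\xi^3\hat w^2$, which is exactly what the fractional powers $\lambda^{7/2}$ built into $\Phi_5$ are designed to produce. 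The sign-indefinite cross terms (such as $48s^3\lambda^3\xi^3\,\nabla^2\eta(\nabla^2w\nabla\eta)(\nabla^2w\nabla\eta)$) and the remaining half-integer corrections ($\lambda^{11/2},\lambda^{15/2}$) are dominated by fixing $\lambda$ large. The one genuine obstruction is the negative term $-16s^5\lambda^6\xi^5|\nabla\eta|^6|\nabla w|^2$, whose full gradient is not captured by the directional term $|\nabla w\cdot\nabla\eta|^2$; here I would invoke the elliptic Carleman inequality \cref{lemma.ellipticE-2} (applied to $q=w$ and to the components of $\nabla w$) to recover the missing lower-order gradient energy from $|\Delta w|^2$. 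This elliptic recovery is the technical crux.

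Finally I would absorb the remaining terms. The source is treated by $2\theta I_2 f_2\le\frac12 I_2^2+2\theta^2f_2^2$, so that $\frac12 I_2^2$ is swallowed by $2I_2^2$ while $2\theta^2f_2^2$ passes to the data side; likewise $2I_2I_3\le\frac12 I_2^2+2I_3^2$, and then $I_3^2$, the $\Lambda_1,\Lambda_2,\Lambda_3$ terms, and the lower-order remainders of $M_1,M_2$ are strictly lower order in $(s,\lambda)$ than the retained energy, hence absorbed by first taking $\lambda\ge\lambda_0$ and then $s\ge s_0(\lambda)$. The $f_1,g_1,g_2$ contributions sitting in $\Lambda_4$ and in the cross-variation $dw\,d\hat w=\theta^2g_1(g_2+\ell_t g_1)\,dt$ are moved to the right, where Cauchy--Schwarz with suitably chosen weights reproduces exactly the weighted $f_1,f_2,g_1,g_2$ norms in \eqref{eq.08012022}. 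Converting back from $w$ to $v$ via $w=\theta v$ and $\hat w=\theta\hat v+\ell_t w$ (the extra factors $|\nabla\ell|\sim s\lambda\xi$ generated being absorbed by strictly higher-order terms already controlled) yields \eqref{eq.08012022}. I expect the main difficulty to be the positivity analysis of $M_1+M_2$ together with the elliptic recovery of the full gradient, and the delicate ordering of $s$ and $\lambda$ that makes every error term strictly subordinate.
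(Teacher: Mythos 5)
Your overall architecture matches the paper's proof: integrate the identity \cref{eq.finalEquation-3} over $Q$, use \cref{eq.hT} and the vanishing of expectations of It\^o integrals, convert the divergences into boundary integrals controlled by the $\Sigma$-terms of \cref{eq.08012022}, extract positivity from $M_1+M_2$, absorb $I_3$, the $\Lambda$-terms and the sources by Cauchy--Schwarz, and fix $\lambda$ first and then $s$. But the step you yourself single out as the crux --- absorbing the negative term $-16 s^{5}\lambda^{6}\xi^{5}|\nabla\eta|^{6}|\nabla w|^{2}$ --- is handled by a mechanism that does not work. \Cref{lemma.ellipticE-2} yields at best
\[
\mathbb{E}\int_{Q} s^{4}\lambda^{6}\xi^{4}\theta^{2}\bigl(s^{2}\lambda^{2}\xi^{2}|v|^{2}+|\nabla v|^{2}\bigr)\,dx\,dt
\;\leq\; C\,\mathbb{E}\int_{Q} s^{3}\lambda^{4}\xi^{3}\theta^{2}|\Delta v|^{2}\,dx\,dt,
\]
i.e.\ a gradient term with weight $s^{4}\lambda^{6}\xi^{4}$, whereas the term you must dominate carries the weight $s^{5}\lambda^{6}\xi^{5}$ --- a full factor $s\xi$ larger. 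Since the only positive $\Delta$-energy available has weight $s^{3}\lambda^{4}\xi^{3}$, no application of the elliptic Carleman inequality (to $w$, to $v$, or to components of $\nabla w$) can bridge this gap; taking $s$ or $\lambda$ large makes the mismatch worse, not better. The same power count defeats the crude expansion $|\nabla w|^{2}\le 2\theta^{2}(|\nabla v|^{2}+s^{2}\lambda^{2}\xi^{2}|\nabla\eta|^{2}|v|^{2})$: even the zeroth-order piece then has coefficient $32\,s^{7}\lambda^{8}\xi^{7}|\nabla\eta|^{8}$, which exceeds the available $8\,s^{7}\lambda^{8}\xi^{7}|\nabla\eta|^{8}$ from $M_1$.

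What actually closes this step in the paper (see \cref{eq.s2-5-2}) is an exact algebraic cancellation, not an elliptic recovery: the three terms of $M_1$ with coefficients $(8s^{3}\lambda^{4}\xi^{3}-s^{3}\lambda^{7/2}\xi^{3})|\nabla\eta|^{4}$ in front of $|\Delta w|^{2}$, $-2s^{2}\lambda^{2}\xi^{2}|\nabla\eta|^{2}|\nabla w|^{2}$ and $s^{4}\lambda^{4}\xi^{4}|\nabla\eta|^{4}|w|^{2}$ are grouped as $a(\Delta w+\rho w)^{2}-2a\rho\,\mathrm{div}(w\nabla w)$ with $\rho=s^{2}\lambda^{2}\xi^{2}|\nabla\eta|^{2}$; the square is nonnegative and the divergence integrates by parts to terms of strictly lower order in $(s,\lambda)$. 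The offending gradient term is thus exactly compensated by the $|\Delta w|^{2}$ and $|w|^{2}$ energies, and \cref{lemma.ellipticE-2} is invoked only afterwards, to generate the \emph{weaker} weights $s^{4}\lambda^{6}\xi^{4}|\nabla v|^{2}$ and $s^{6}\lambda^{8}\xi^{6}|v|^{2}$ appearing in \cref{eq.08012022} from the retained $s^{3}\lambda^{4}\xi^{3}\theta^{2}|\Delta v|^{2}$. (A second, smaller omission: you do not explain how the $s^{2}\lambda^{4}\xi^{2}|\nabla^{2}v|^{2}$ term is produced; the paper gets it by elliptic regularity for $\tilde v=s\lambda^{2}\xi e^{s\xi}v\in H^{2}\cap H_{0}^{1}(G)$, not from $M_1$ directly.) As written, your proof has a genuine gap at its central step.
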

\begin{proof}[Proof of \cref{thm.ceII}]

    In what follows, for a positive integer $ r $, we denote by $ O(\lambda^{r}) $ a function of order $ \lambda^{r} $ for large $ \lambda $. Similarly, we use the notation $ O(e^{C\lambda}) $.
    
    \smallskip

    In order to shorten the formulae, we define
    \begin{align}\label{eq.AB-2}
        \notag
        &\mathcal{A}_{1}\deq  \mathbb{E}\int_{Q} \big(s^{5} O(e^{C\lambda})+s^{6}  \xi^{6} O(\lambda^{7})\big)|w|^{2} d x d t ,\\ \notag
        &  
         \mathcal{A}_{2} \deq \mathbb{E}\int_{Q} \big(s^{3} O(e^{C\lambda})+s^{4}  \xi^{4} O(\lambda^{5})\big)|\nabla w|^{2} d x d t,
        \\ \notag
        &\mathcal{A}_{3} \deq \mathbb{E}\int_{Q} \big(s O(e^{C\lambda})+s^{2}  \xi^{2} O(\lambda^{3})\big)|\nabla^{2} w|^{2} d x d t,\\  
        &\mathcal{A}_{4} \deq \mathbb{E}\int_{Q} O(1) |\nabla \Delta w|^{2} d x d t,\\ \notag &
        \mathcal{B}_{2} \deq \mathbb{E}\int_{\Sigma} \big(s^{2} O(e^{C\lambda})+s^{3}  \xi^{3} O(\lambda^{2})\big)|\nabla^{2} w|^{2} d x d t, \\ \notag
        & \hat{\mathcal{A}}_{1} \deq \mathbb{E}\int_{Q} \big(s^{2} O(e^{C\lambda})+s^{3}  \xi^{3} O(\lambda^{3})\big)|\hat{w}|^{2} d x d t,\\ \notag &
        \hat{\mathcal{A}}_{2} \deq  \mathbb{E}\int_{Q} \big(O(\lambda)+s  \xi O(1)\big)|\nabla \hat{w}|^{2} d x d t,
    \end{align}
    and
    \begin{equation*}
        \mathcal{A}\deq\mathcal{A}_{1}+\mathcal{A}_{2}+\mathcal{A}_{3}+\mathcal{A}_{4}
        +\hat{\mathcal{A}}_{1}
        +\hat{\mathcal{A}}_{2}.
    \end{equation*}

Integrating the equality \cref{eq.finalEquation-3} on $ Q $, taking mathematical expectation in both sides, and noting \cref{eq.hT}, we obtain
\begin{align}
    \notag
        & 2 \mathbb{E} \int_{Q} \theta I_{2} (  d \hat{v} + \Delta^{2} v dt ) dx
        - 2 \mathbb{E} \int_{Q} \operatorname{div} (V_{1}+V_{2}) dxdt
        \\[1mm] \label{11.25-eq5}
        & =
        2 \mathbb{E} \int_{Q}  I_{2}^{2} dxdt
        + 2 \mathbb{E} \int_{Q} I_{2}I_{3} dx
        + 2 \mathbb{E} \int_{Q} (M_{1} + M_{2}) dx dt
        \\ \notag
        & \quad 
        + 2 \sum_{i,j,k,l=1}^n \mathbb{E} \int_{Q} \Lambda^{ijkl}_{1} w_{x_{i}x_{j}}w_{x_{k}x_{l}}dx dt
        + 2 \sum_{i, j=1}^n \mathbb{E} \int_{Q} \Lambda_{2}^{ij} w_{x_{i}} w_{x_{j}}dx dt\\ \notag
        & \quad 
        + 2 \mathbb{E} \int_{Q} \Lambda_{3} w^{2} dx dt
        + 2 \mathbb{E} \int_{Q} \Lambda_{4} dx.
\end{align}
By \cref{prop.eta}, \cref{eq.Ic-2,eq.I2c-2,eq.AB-2}, we have
\begin{equation}\label{11.25-eq1}
\sum_{i,j,k,l=1}^n \mathbb{E} \int_{Q} \Lambda^{ijkl}_{1} w_{x_{i}x_{j}}w_{x_{k}x_{l}}dx dt
\geq
- C \mathbb{E}\int_{Q} s \lambda ^{4} \xi |\nabla^{2} w|^{2} d x d t
\geq
\mathcal{A}_{3},
\end{equation}
\begin{equation}\label{11.25-eq2}
\sum_{i, j=1}^n \mathbb{E} \int_{Q} \Lambda_{2}^{ij} w_{x_{i}} w_{x_{j}}dx dt
\geq
- C \mathbb{E}\int_{Q} \big(s^{3} \lambda ^{6} \xi^{3} +s^{2} \lambda ^{6} \xi^{2} \big) |\nabla^{2} w|^{2} d x d t
\geq
\mathcal{A}_{2}
,	
\end{equation}
\vspace{2mm}
\begin{equation}\label{11.25-eq3}
\mathbb{E} \int_{Q} \Lambda_{3} w^{2} dx dt
\geq
- C \mathbb{E}\int_{Q} s^{3} \lambda ^{8} \xi^{3} \big( 1 + s \xi +s^{2}  \xi^{2} \big) |\nabla w|^{2} d x d t
\geq
\mathcal{A}_{1}	
\end{equation}
and
\begin{align}\notag
    \mathbb{E} \int_{Q} \Lambda_{4} dx
    &\geq
    - C\mathbb{E} \int_{Q} \big[
        |\nabla \Delta w|^{2}
        + s O(e^{C \lambda}) |\nabla^{2} w|^{2}
        + s^{3} O(e^{C \lambda}) |\nabla w|^{2}
        + s^{5} O(e^{C \lambda}) | w|^{2}
    \\ \notag
    & \qquad \qquad \quad ~
        + s^{2} O(e^{C \lambda}) | \hat{w}|^{2}
        + s^{3} \lambda^{3} \xi^{3} |\hat{w}|^{2}
        + (s \xi + \lambda) |\nabla \hat{w}|^{2}
    \big] dxdt
    \\[1mm]
    & \quad \label{11.25-eq4}
    - C \mathbb{E} \int_{Q} \theta^{2} \big(
        s^{6} \lambda^{6} \xi^{6} f_{1}^{2}
        + s^{4} \lambda^{4} \xi^{4} |\nabla f_{1}|^{2}
        + s^{2} \lambda^{2} \xi^{2} |\nabla^{2} f_{1}|^{2}
        +s^{6} \lambda^{6} \xi^{6} g_{1}^{2}
        \\ \notag
        & \qquad \qquad \quad ~
        +s^{4} \lambda^{4} \xi^{4} |\nabla g_{1}|^{2}
        +s^{2} \lambda^{2} \xi^{2} |\nabla^{2} g_{1}|^{2}
        +|\nabla \Delta g_{1}|^{2}
        +s^{2} \lambda^{2} \xi^{2} |g_{2}|^{2}
        \big) dxdt
    \\[1mm] \notag
    & \geq
    - C \mathbb{E} \int_{Q} \theta^{2} \big(
        s^{6} \lambda^{6} \xi^{6} f_{1}^{2}
        + s^{4} \lambda^{4} \xi^{4} |\nabla f_{1}|^{2}
        + s^{2} \lambda^{2} \xi^{2} |\nabla^{2} f_{1}|^{2}
        +s^{6} \lambda^{6} \xi^{6} g_{1}^{2}
        \\ \notag
        & \qquad \qquad \quad ~
        +s^{4} \lambda^{4} \xi^{4} |\nabla g_{1}|^{2}
        +s^{2} \lambda^{2} \xi^{2} |\nabla^{2} g_{1}|^{2}
        +|\nabla \Delta g_{1}|^{2}
        +s^{2} \lambda^{2} \xi^{2} |g_{2}|^{2}
        \big) dxdt
    + \mathcal{A}.
\end{align}
It follows from \eqref{11.25-eq5}--\eqref{11.25-eq4} that
\begin{align}
    \notag
    & 2 \mathbb{E} \int_{Q} \theta I_{2} (  d\hat{v} + \Delta^{2} v dt ) dx
    - 2 \mathbb{E} \int_{Q} \operatorname{div} (V_{1}+V_{2}) dxdt
    \\ \label{eq.beforeS1-2}
    & \geq
    2 \mathbb{E} \int_{Q}  I_{2}^{2} dxdt
    + 2 \mathbb{E} \int_{Q} I_{2}I_{3} dx
    + 2 \mathbb{E} \int_{Q} (M_{1}+M_{2}) dx dt
    \\ \notag
    & \qquad
    - C \mathbb{E} \int_{Q} \theta^{2} \big(
        s^{6} \lambda^{6} \xi^{6} f_{1}^{2}
        + s^{4} \lambda^{4} \xi^{4} |\nabla f_{1}|^{2}
        + s^{2} \lambda^{2} \xi^{2} |\nabla^{2} f_{1}|^{2}
        +s^{6} \lambda^{6} \xi^{6} g_{1}^{2}
        +s^{4} \lambda^{4} \xi^{4} |\nabla g_{1}|^{2}
        \\[1mm] \notag
        & \qquad \qquad  \qquad \quad ~
        +s^{2} \lambda^{2} \xi^{2} |\nabla^{2} g_{1}|^{2}
        +|\nabla \Delta g_{1}|^{2}
        +s^{2} \lambda^{2} \xi^{2} |g_{2}|^{2}
        \big) dxdt
    + \mathcal{A}
    .
\end{align}

We will estimate the terms in \cref{eq.beforeS1-2} one by one. The procedure is divided into three steps.

\medskip

\emph{Step 1.} In this step, we consider the divergence terms.

Thanks to the boundary conditions satisfied by $v$, it is easy to check that
\begin{align}\label{eq.BC-2}
    w & =0, \quad \nabla w = 0,  \quad \nabla^{2} w  = \theta \nabla^{2} v \quad\mbox{ on } \Sigma.
\end{align}
We also have
\begin{align}\label{eq.b2-2}
    \frac{\partial w_{x_{i}}}{\partial \nu} \nu^{j}
    = w_{x_{i}x_{j}}
    = w_{x_{j}x_{i}}
    = \frac{\partial w_{x_{j}}}{\partial \nu} \nu^{i},
\end{align}
which implies 
\begin{align}\notag
    |\Delta w|^{2} &
    =\sum_{i,j=1}^{n} w_{x_{i}x_{i}} w_{x_{j}x_{j}}
    =\sum_{i,j=1}^{n} \frac{\partial w_{x_{i}}}{\partial \nu} \nu^{i} \frac{\partial w_{x_{j}}}{\partial \nu} \nu^{j}\\ \label{eq.b21-2}
    &
    =\sum_{i,j=1}^{n} \frac{\partial w_{x_{i}}}{\partial \nu} \nu^{j} \frac{\partial w_{x_{j}}}{\partial \nu} \nu^{i}
    = \sum_{i,j=1}^{n} w_{x_{i}x_{j}}^{2}
    \\ \notag
    & =
    |\nabla^{2} w|^{2}
    = \sum_{i=1}^{n} \biggl( \frac{\partial w_{i}}{\partial \nu}\biggr)^{2} \quad\mbox{ on } \Sigma.
\end{align}

Thanks to \cref{eq.BC-2},   we obtain that
\begin{align}\notag
    V_{1} \cdot \nu
    & = \sum_{i,j,k,l=1}^{n} \Phi_{1}^{l} w _{x_{k}x_{k}x_{l}}w_{x_{i}x_{i}x_{j}} \nu^{j}
    - \frac{1}{2} \sum_{i,j,k,l=1}^{n} \Phi_{1}^{j}  w _{x_{k}x_{k}x_{l}}w_{x_{i}x_{i}x_{l}}\nu^{j}
    \\ \label{eq.be1-2}
    & \quad + \frac{1}{2} \sum_{i,j,k=1}^{n}    \Psi_{2} \Phi_{1}^{j}w_{x_{i}x_{i}}w_{x_{k}x_{k}}  \nu^{j} + \sum_{i,j,k,l=1}^{n} \Psi_{3}^{ik}\Phi_{1}^{l} w_{x_{i}x_{k}}w_{x_{l}x_{j}}\nu^{j}
    \\\notag
    & \quad 
    - \frac{1}{2} \sum_{i,j,k,l=1}^{n} \Psi_{3}^{ij}\Phi_{1}^{l} w_{x_{i}x_{k}}w_{x_{k}x_{l}}\nu^{j}
    -\sum_{i,j,k=1}^{n} \Phi_{4}^{k} w_{x_{i}x_{j}} w_{x_{i}x_{k}}\nu^{j}
    \\ \notag
    & \quad
    + \frac{1}{2} \sum_{i,j,k=1}^{n} \Phi_{4}^{j}w_{x_{i}x_{k}}^{2}\nu^{j} \qquad\mbox{ on } \Sigma.
\end{align}
By \cref{prop.eta,eq.I2c-2}, we have that
\begin{align}\label{eq.be2-2}
    \sum_{i,j,k,l=1}^{n} \Phi_{1}^{l} w _{x_{k}x_{k}x_{l}}w_{x_{i}x_{i}x_{j}} \nu^{j}
    - \frac{1}{2} \sum_{i,j,k,l=1}^{n} \Phi_{1}^{j} \nu^{j} w _{x_{k}x_{k}x_{l}}w_{x_{i}x_{i}x_{l}}
    \geq
    -C s \lambda \xi |\nabla \Delta w|^{2} \quad\mbox{ on } \Sigma.
\end{align}
From \cref{eq.Ic-2,eq.I2c-2}, we get that
\begin{align}\label{eq.be3-2}
    \frac{1}{2} \sum_{i,j,k=1}^{n}    \Psi_{2} \Phi_{1}^{j} \nu^{j} w_{x_{i}x_{i}}w_{x_{k}x_{k}}
    =
    - 4 s^{3} \lambda ^{3} \xi^{3} |\nabla \eta|^{2} \frac{\partial \eta}{\partial \nu} |\Delta w|^{2} \quad\mbox{ on } \Sigma.
\end{align}
Combining \cref{eq.Ic-2}, \cref{eq.I2c-2}, \cref{eq.b2-2} and \cref{eq.b21-2},  we find  that
\begin{align}\notag
    \sum_{i,j,k,l=1}^{n} \Psi_{3}^{ik}\Phi_{1}^{l} w_{x_{i}x_{k}}w_{x_{l}x_{j}} \nu^{j}
    & = -16 \sum_{i,j,k,l=1}^{n} s^{3} \lambda^{3} \xi^{3} \eta_{x_{i}} \eta_{x_{k}} \eta_{x_{l}} \nu^{j} w_{x_{i}x_{k}}w_{x_{l}x_{j}}
    \\ \label{eq.be4-2}
    & = -16 \sum_{i,j,k,l=1}^{n} s^{3} \lambda^{3} \xi^{3} \eta_{x_{i}} \eta_{x_{k}}  \nu^{j} w_{x_{i}x_{k}} \frac{\partial w_{j}}{\partial \nu} \frac{\partial\eta}{\partial \nu}
    \\\notag
    & = -16 \sum_{i,j,k,l=1}^{n} s^{3} \lambda^{3} \xi^{3}  \biggl(\frac{\partial\eta}{\partial \nu}\biggr)^{3}  \biggl(\frac{\partial w_{j}}{\partial \nu}\biggr)^{2}
    \\ \notag
    & = -16 \sum_{i,j,k,l=1}^{n} s^{3} \lambda^{3} \xi^{3}  \biggl(\frac{\partial\eta}{\partial \nu}\biggr)^{3}  |\Delta w|^{2}  \quad\mbox{ on } \Sigma 
\end{align}
and
\begin{align}\notag
    &- \frac{1}{2} \sum_{i,j,k,l=1}^{n} \Psi_{3}^{ij}\Phi_{1}^{l} w_{x_{i}x_{k}}w_{x_{k}x_{l}}\nu^{j}
    -\sum_{i,j,k=1}^{n} \Phi_{4}^{k} w_{x_{i}x_{j}} w_{x_{i}x_{k}}\nu^{j}
    + \frac{1}{2} \sum_{i,j,k=1}^{n} \Phi_{4}^{j}w_{x_{i}x_{k}}^{2}\nu^{j}
    \\\label{eq.be5-2}
    & =
    8 \sum_{i,j,k,l=1}^{n} s^{3} \lambda^{3} \xi^{3}  \biggl(\frac{\partial\eta}{\partial \nu}\biggr)^{3}  |\Delta w|^{2}
    + 2 s^{3} \lambda ^{3} \xi^{3} |\nabla \eta|^{2} \frac{\partial \eta}{\partial \nu} |\Delta w|^{2} \quad\mbox{ on } \Sigma.
\end{align}
Hence, combining \cref{prop.eta,eq.be1-2,eq.be2-2,eq.be3-2,eq.be4-2,eq.be5-2},   we obtain that
\begin{align*}
    V_{1} \cdot \nu
    & \geq
    - C s \lambda \xi |\nabla \Delta w|^{2}
    - 8 \sum_{i,j,k,l=1}^{n} s^{3} \lambda^{3} \xi^{3}  \biggl(\frac{\partial\eta}{\partial \nu}\biggr)^{3}  |\Delta w|^{2}
    - 2 s^{3} \lambda ^{3} \xi^{3} |\nabla \eta|^{2} \frac{\partial \eta}{\partial \nu} |\Delta w|^{2}
    \\
    & \geq
    - C s \lambda \xi |\nabla \Delta w|^{2}
    - C s^{3} \lambda^{3} \xi^{3} |\Delta w|^{2} \quad\mbox{ on } \Sigma.
\end{align*}
This implies that
\begin{align}\label{eq.bV1-2}
    \mathbb{E} \int_{Q} \operatorname{div} V_{1} dxdt
    & \geq
    - C \mathbb{E} \int_{\Sigma} s^{3} \lambda^{3} \xi^{3} |\Delta w|^{2}  d\Gamma dt
    - C \mathbb{E} \int_{\Sigma} s \lambda \xi |\nabla \Delta w|^{2} d\Gamma dt
    .
\end{align}

Thanks to \cref{prop.eta}, \cref{eq.BC-2,eq.I2c-2,eq.AB-2}, we have
\begin{align}
    \notag
    &\mathbb{E}  \int_{Q} \operatorname{div} V_{2} dx dt
    =\mathbb{E}\int_{\Sigma}  V_{2} \cdot \nu d\Gamma dt
    \\ \notag
    &=
    \mathbb{E}\int_{\Sigma} \sum_{j=1}^{n} \Big( \sum_{i,k,l,r,m=1}^{n} \Theta_{1}^{ijklrm} w_{x_{i}x_{k}x_{l}}w_{x_{r}x_{m}}
    + \sum_{i,k,l,r=1} \Theta_{2}^{ijklr} w_{x_{i}x_{k}}w_{x_{l}x_{r}}
    +  \Theta_{9}^{j} \Big) \nu^{j} d\Gamma dt
    \\ \label{eq.bV2-2}
    & \geq  \mathbb{E}\int_{\Sigma} \Big[
        s\xi O(\lambda)|\nabla \Delta w|^{2}
    + s O(e^{C\lambda}) |\nabla^{2} w|^{2}
    \Big] d\Gamma dt
    \\ \notag
    & \geq
    - C \mathbb{E} \int_{\Sigma} s \lambda \xi |\nabla \Delta w|^{2} d\Gamma dt
    + \mathcal{B}_{2}
    .
\end{align}

Combining \cref{eq.beforeS1-2,eq.bV1-2,eq.bV2-2}, we obtain that
\begin{align}\label{eq.s1-2}
    \notag
    & 2 \mathbb{E} \int_{Q} \theta I_{2} \big(  d\hat{v} + \Delta^{2} v dt \big) dx
    + C \mathbb{E} \int_{\Sigma} s^{3} \lambda^{3} \xi^{3} |\Delta w|^{2}  d\Gamma dt
    \\ \notag
    &
    + C \mathbb{E} \int_{Q} \theta^{2} \big(
        s^{6} \lambda^{6} \xi^{6} f_{1}^{2}
        + s^{4} \lambda^{4} \xi^{4} |\nabla f_{1}|^{2}
        + s^{2} \lambda^{2} \xi^{2} |\nabla^{2} f_{1}|^{2}
        +s^{6} \lambda^{6} \xi^{6} g_{1}^{2}
        +s^{4} \lambda^{4} \xi^{4} |\nabla g_{1}|^{2} 
    \\[2mm]
    & \qquad \qquad \quad ~ +s^{2} \lambda^{2} \xi^{2} |\nabla^{2} g_{1}|^{2}
    +|\nabla \Delta g_{1}|^{2}
    +s^{2} \lambda^{2} \xi^{2} |g_{2}|^{2}
    \big) dxdt \\ \notag
    &
    + C \mathbb{E} \int_{\Sigma} s \lambda \xi |\nabla \Delta w|^{2} d\Gamma dt
    + \mathcal{A}
    + \mathcal{B}_{2}
    \\ \notag
    & \geq
    2 \mathbb{E} \int_{Q}  I_{2}^{2} dxdt
    + 2 \mathbb{E} \int_{Q} I_{2}I_{3} dx
    + 2 \mathbb{E} \int_{Q} (M_{1}+M_{2}) dx dt.
\end{align}

\medskip

\emph{Step 2.} In this step, we study $\ds 2 \mathbb{E} \int_{Q} (M_{1}+M_{2}) dxdt $ via integration by parts.

From \cref{prop.eta}, \cref{eq.AB-2} and \cref{eq.BC-2},  we get that
\begin{align} \label{eq.s2-1-2}
\notag &
-\mathbb{E}\int_{Q}  16 s^{3} \lambda ^{4} \xi ^{3} |\nabla \eta|^{2}  | \nabla^{2} w \nabla \eta  | ^{2} dxdt \\ \notag
& = - \mathbb{E}\int_{Q}  16 s^{3} \lambda ^{4} \xi ^{3} \sum_{i,j,k,l=1}^{n} \eta_{x_{i}}^{2} \eta_{x_{k}} \eta_{x_{l}} w_{x_{k}x_{j}} w_{x_{l}x_{j}} dxdt\\ 
& =  \mathbb{E} \int_{\Sigma}  8  s^{3} \lambda ^{4}  \sum_{i,j,k,l=1}^{n}  
    [ -2 \xi ^{3} \eta_{x_{i}}^{2} \eta_{x_{k}} \eta_{x_{l}} w_{x_{k}x_{j}}w_{x_{l}}
+     ( \xi ^{3} \eta_{x_{i}}^{2} \eta_{x_{k}} \eta_{x_{l}} )_{x_{j}} w_{x_{k}}w_{x_{l}}
]
    \nu^{j} d \Gamma dt
\\ \notag
& \quad  
    +   \mathbb{E}\int_{Q}  8  s^{3} \lambda ^{4} \sum_{i,j,k,l=1}^{n} [ 2 \xi ^{3} \eta_{x_{i}}^{2} \eta_{x_{k}} \eta_{x_{l}} w_{x_{k}x_{j}x_{j}} w_{x_{l}} - ( \xi ^{3}\eta_{x_{i}}^{2} \eta_{x_{k}} \eta_{x_{l}} )_{x_{j}x_{j}} w_{x_{k}}w_{x_{l}} ]dxdt
    \\ \notag
&\geq
-4 \mathbb{E}\int_{Q}  s \lambda^{2} \xi |\nabla \Delta w \nabla \eta|^{2} dxdt
-16 \mathbb{E}\int_{Q}  s^{5} \lambda^{6} \xi^{5} |\nabla \eta|^{4} |\nabla  w \nabla \eta|^{2} dxdt
- \mathcal{A}_{2}.
\end{align}
Thanks to \cref{eq.xiEta,prop.eta}, we know that there esists $ \lambda_{1} > 0 $ such that for all  $ \lambda \geq \lambda_{1} $, it holds that
\begin{align}\notag
  &  64 \mathbb{E} \int_{Q}   s^{5} \lambda ^{5} \xi^{5}  ( \nabla^{2} \eta \nabla \eta \nabla \eta  ) |\nabla w \cdot \nabla \eta|^{2} dxdt \\ \label{eq.s2-2-2}
    & =
    128 \mathbb{E} \int_{Q}   s^{5} \lambda ^{5} \xi^{5} |\nabla \eta|^{2} |\nabla w \cdot \nabla \eta|^{2} dxdt
    \\ \notag
    & \geq
    - \mathbb{E} \int_{Q}   s^{5} \lambda ^{6} \xi^{5} |\nabla \eta|^{4} |\nabla w \cdot \nabla \eta|^{2} dxdt
    .
\end{align}
Combining \cref{prop.eta}, \cref{eq.xiEta}, \cref{eq.AB-2} and  \cref{eq.BC-2}, we get that
\begin{align}\label{eq.s2-3-2} \notag
    & \mathbb{E}  \int_{Q} \big[4 s^{3} \lambda^{3} \xi ^{3}  ( \nabla^{2} \eta \nabla \eta \nabla \eta  ) +2  s^{3} \lambda^{3} \xi ^{3} |\nabla \eta|^{2} \Delta \eta\big] ( |\nabla ^{2} w|^{2}-|\Delta w|^{2} )
    \\ \notag
    & = \frac{1}{2} \mathbb{E}  \int_{Q} \Delta  \big[4 s^{3} \lambda^{3} \xi ^{3}  ( \nabla^{2} \eta \nabla \eta \nabla \eta  ) +2  s^{3} \lambda^{3} \xi ^{3} |\nabla \eta|^{2} \Delta \eta\big] |\nabla w|^{2} dx dt 
    \\
    & \quad 
    +  \mathbb{E}  \int_{Q} \nabla   \big[4 s^{3} \lambda^{3} \xi ^{3}  ( \nabla^{2} \eta \nabla \eta \nabla \eta  ) +2  s^{3} \lambda^{3} \xi ^{3} |\nabla \eta|^{2} \Delta \eta\big] \cdot \nabla w \Delta w dx dt 
    \\ \notag
    & \geq
    - \mathbb{E}  \int_{Q} s^{2}  \xi^{2} O( \lambda^{3} ) |\nabla^{2} w|^{2} dxdt -  \mathbb{E}  \int_{Q}\big(s^{3} O(e^{C\lambda})+s^{4}  \xi^{4} O(\lambda^{5})\big) |\nabla w|^{2} dxdt 
    \\ \notag
    & \geq - \mathcal{A}_{2} - \mathcal{A}_{3},
\end{align}
and that
\begin{align}\label{eq.s2-4-2}
    \notag
    & 32 \mathbb{E}  \int_{Q}
    s^{3} \lambda^{3} \xi^{3}  ( \nabla^{2} w \nabla \eta \nabla \eta  ) \sum\limits_{i,j=1}^{n} \eta_{x_{i}x_{j}} w_{x_{i}x_{j}} dxdt
    \\ \notag
    &= 32 \sum_{i,j,k,l=1}^{n} \mathbb{E}  \int_{Q}  s^{3} \lambda^{3} \xi^{3} \eta_{x_{k}x_{l}} \eta_{x_{i}}\eta_{x_{j}} w_{x_{i}x_{j}}w_{x_{k}x_{l}}  dxdt
        \\ \notag
        &= 32 \sum_{i,j,k,l=1}^{n}  \mathbb{E} \int_{\Sigma}  s^{3} \lambda^{3} \xi^{3} \eta_{x_{k}x_{l}} \eta_{x_{i}}\eta_{x_{j}}  ( w_{x_{i}} w_{x_{k}x_{l}} \nu^{j}   -  w_{x_{i}} w_{x_{j}x_{l}}\nu^{k} )d \Gamma dt \\ 
        &\quad 
        - 32 \sum_{i,j,k,l=1}^{n} \mathbb{E}  \int_{Q}  s^{3} \lambda^{3} (\xi^{3} \eta_{x_{k}x_{l}} \eta_{x_{i}}\eta_{x_{j}})_{x_{j}} w_{x_{i}} w_{x_{k}x_{l}} dxdt\\ 
        \notag
        &\quad 
        + 32 \sum_{i,j,k,l=1}^{n} \mathbb{E}  \int_{Q}  s^{3} \lambda^{3} (\xi^{3} \eta_{x_{k}x_{l}} \eta_{x_{i}}\eta_{x_{j}})_{x_{k}} w_{x_{i}} w_{x_{j}x_{l}} dxdt \\[2mm] \notag
        &\quad 
        +32 \mathbb{E} \int_{Q}   s^{3} \lambda^{3} \xi^{3} \nabla^{2} \eta  (\nabla^{2} w \nabla \eta )  ( \nabla^{2} w \nabla \eta  ) dxdt
        \\[2mm] \notag
    &
    \geq  32 \mathbb{E} \int_{Q}   s^{3} \lambda^{3} \xi^{3} \nabla^{2} \eta  (\nabla^{2} w \nabla \eta )  ( \nabla^{2} w \nabla \eta  ) dxdt -  \mathcal{A}_{2}-\mathcal{A}_{3}
    \\[2mm] \notag
    &
    =  64 \mathbb{E} \int_{Q}   s^{3} \lambda^{3} \xi^{3}  |\nabla^{2} w \nabla \eta|^{2} dxdt
    -  \mathcal{A}_{2}-\mathcal{A}_{3}
    .
\end{align}
From \cref{eq.AB-2,eq.BC-2}, we see that
\begin{align}\notag
    &\mathbb{E} \int_{Q} \bigl[
        \big( 8 s^{3} \lambda^{4} \xi^{3}  - s^{3} \lambda^{\frac{7}{2}} \xi^{3}\big) |\nabla \eta|^{4} |\Delta w|^{2}
        - 2 \big( 8 s^{3} \lambda^{4} \xi^{3}  - s^{3} \lambda^{\frac{7}{2}} \xi^{3}\big) |\nabla \eta|^{6} s^{2} \lambda^{2} \xi^{2} |\nabla w|^{2}
        \\\notag
        & \qquad ~
        + \big( 8 s^{3} \lambda^{4} \xi^{3}  - s^{3} \lambda^{\frac{7}{2}} \xi^{3}\big) |\nabla \eta|^{8} s^{4} \lambda^{4} \xi^{4} |w|^{2}
    \bigr] dxdt
    \\[1mm] \label{eq.s2-5-2}
    &= \mathbb{E} \int_{Q} \bigl[
        \big( 8 s^{3} \lambda^{4} \xi^{3}  - s^{3} \lambda^{\frac{7}{2}} \xi^{3}) |\nabla \eta|^{4} ( \Delta w +s^{2} \lambda^{2} \xi^{2} |\nabla \eta|^{2} w \big)^{2}
        \\\notag
        & \qquad \qquad
        - 2 \big( 8 s^{3} \lambda^{4} \xi^{3}  - s^{3} \lambda^{\frac{7}{2}} \xi^{3}\big) |\nabla \eta|^{6} s^{2} \lambda^{2} \xi^{2} (|\nabla w|^{2} + w \Delta w)
    \bigr] dxdt
    \\[1mm] \notag
    & \geq \mathbb{E} \int_{Q}
        \big( 8 s^{3} \lambda^{4} \xi^{3}  - s^{3} \lambda^{\frac{7}{2}} \xi^{3}\big) |\nabla \eta|^{4} \big(\Delta w +s^{2} \lambda^{2} \xi^{2} |\nabla \eta|^{2} w \big)^{2}
    dxdt
    - \mathcal{A}_{1}.
\end{align}
Noting that $ ( a + b + c )^{2}  \leq 3 (a^{2} + b^{2} + c^{2}) $ for $a, b, c\in \mathbb{R}$, we find that
\begin{align}\notag
    &\mathbb{E}  \int_{Q}  s^{3} \lambda^{4} \xi ^{3} \theta^{2} |\nabla \eta |^{4} | \Delta v|^{2}  dxdt
    \\\label{eq.s2-6-2}
    &= \mathbb{E}  \int_{Q}  s^{3} \lambda^{4} \xi^{3} |\nabla \eta|^{4} \big(
        \Delta w
        -2 s \lambda \xi \nabla \eta \nabla w
        + s^{2} \lambda^{2} \xi^{2} |\nabla \eta|^{2} w
        - s \lambda^{2} \xi |\nabla \eta|^{2} w
        - s \lambda \xi |\Delta \eta| w
    \big)^{2}dxdt
    \\ \notag
    & \leq
    3 \mathbb{E}  \int_{Q}   s^{3} \lambda^{4} \xi^{3} |\nabla \eta|^{4} \big(
        \Delta w
        + s^{2} \lambda^{2} \xi^{2} |\nabla \eta|^{2} w
    \big)^{2}dxdt
    + 12 \mathbb{E}  \int_{Q} s^{5} \lambda^{6} \xi^{5} |\nabla \eta|^{4} |\nabla w\cdot \nabla \eta|^{2}
    + \mathcal{A}_{1}
    .
\end{align}
Combining \cref{eq.s2-1-2,eq.s2-2-2,eq.s2-3-2,eq.s2-4-2,eq.s2-5-2,eq.s2-6-2}, we know there exists $ \lambda_{2} \geq \lambda_{1} $ such that  for all $ \lambda \geq \lambda_{2} $, it holds that
\begin{align}\label{eq.s2-8-2}
    \mathbb{E} \int_{Q} M_{1} dxdt
    + \mathcal{A}
    \geq
    \mathbb{E}  \int_{Q}  \lambda |\nabla \Delta w|^{2} dx dt
    +\mathbb{E}  \int_{Q}  s^{3} \lambda^{4} \xi ^{3} \theta^{2} |\nabla \eta |^{4} | \Delta v|^{2}  dxdt.
\end{align}
It follows from \cref{prop.eta}, \cref{eq.Ic-2,eq.I2c-2,eq.xiEta,eq.AB-2} that
\begin{align}\label{eq.s2-9-2}
    \mathbb{E} \int_{Q} M_{2} dxdt
    +\mathcal{A}
    \geq
    \mathbb{E} \int_{Q} \big(
        16 s \lambda \xi |\nabla \hat{w}|^{2}
        + s^{3} \lambda^{\frac{7}{2}} \xi^{3} |\nabla \eta|^{4} \hat{w}^{2}
\big) dxdt.
\end{align}
Thanks to \cref{eq.s1-2,eq.s2-8-2,eq.s2-9-2}, for $ \lambda \geq \lambda_{2} $, we have that
\begin{align}
    \notag
    & 2 \mathbb{E} \int_{Q} \theta I_{2} (  d\hat{v} + \Delta^{2} v dt ) dx
    + C \mathbb{E} \int_{\Sigma} s^{3} \lambda^{3} \xi^{3} |\Delta w|^{2}  d\Gamma dt
    \\ \notag
    & \quad + C \mathbb{E} \int_{\Sigma} s \lambda \xi |\nabla \Delta w|^{2} d\Gamma dt
    + \mathcal{A}
    + \mathcal{B}_{2}
    \\[1mm] \notag
    &
    + C \mathbb{E} \int_{Q} \theta^{2} \big(
        s^{6} \lambda^{6} \xi^{6} f_{1}^{2}
        + s^{4} \lambda^{4} \xi^{4} |\nabla f_{1}|^{2}
        + s^{2} \lambda^{2} \xi^{2} |\nabla^{2} f_{1}|^{2}
        +s^{6} \lambda^{6} \xi^{6} g_{1}^{2}
    \\[1mm] \label{eq.s2-2-3}
    & \qquad \qquad \quad ~ +s^{4} \lambda^{4} \xi^{4} |\nabla g_{1}|^{2}
    +s^{2} \lambda^{2} \xi^{2} |\nabla^{2} g_{1}|^{2}
    +|\nabla \Delta g_{1}|^{2}
    +s^{2} \lambda^{2} \xi^{2} |g_{2}|^{2}
    \big) dxdt
    \\[1.5mm] \notag
    & \geq
    2 \mathbb{E} \int_{Q}  I_{2}^{2} dxdt
    + 2 \mathbb{E} \int_{Q} I_{2}I_{3} dx  \\ \notag
    & \quad
    +\mathbb{E} \int_{Q} \big(
        \lambda |\nabla \Delta w|^{2}
        + s^{3} \lambda^{4} \xi ^{3} \theta^{2} | \Delta v|^{2}
        +  s \lambda \xi |\nabla \hat{w}|^{2}
        + s^{3} \lambda^{\frac{7}{2}} \xi^{3}  \hat{w}^{2}
    \big) dxdt.
\end{align}

\medskip

\emph{Step 3.} In this step, we get the estimate of $v$.

\smallskip

From \eqref{eq.carlemanEQII}, we have that 
\begin{equation}\label{eq.s3-0-2}
    2 \mathbb{E}\int_{Q} \theta I_{2} \big( d \hat{v} + \Delta^{2} v dt \big) dx
   \leq \mathbb{E} \int_{Q} I_{2}^{2} dxdt + \mathbb{E} \int_{Q} \theta^{2} f_{2}^{2} dx dt.
\end{equation}
Thanks to \cref{prop.eta}, \cref{eq.I3-2,eq.AB-2}, we find that
\begin{align}
    \label{eq.s3-00-2}
    2 \mathbb{E} \int_{Q} I_{2}I_{3}dx  \geq  -  \mathbb{E} \int_{Q} I_{2}^{2} dx dt  - \mathcal{A}
    - C \mathbb{E} \int_{Q} s^{2} \lambda^{2} \xi^{2} \theta^{2} f_{1}^{2} dx dt
    .
\end{align}

Noting that
\begin{align*}
    |\nabla \Delta w|^{2} \leq C \theta^{2} \big( |\nabla \Delta v|^{2} + s^{2}\lambda^{2} \xi^{2} |\nabla^{2} v|^{2}\big) \quad\mbox{ on } \Sigma,
\end{align*}
by \cref{eq.b21-2,eq.BC-2}, there exists $ \lambda_{3} > 0 $ such that  for all  $ \lambda \geq \lambda_{3} $, there is $ s_{1} =  s_{1}(\lambda) > 0 $, such that for all $ s \geq s_{1} $, we have that
\begin{equation}\label{eq.s3-1-2}
\begin{array}{ll}\ds
 \mathcal{B}_{2}
 + \mathbb{E} \int_{\Sigma} s^{3} \lambda^{3} \xi^{3} |\Delta w|^{2}  d\Gamma dt
 + \mathbb{E} \int_{\Sigma} s \lambda \xi |\nabla \Delta w|^{2} d\Gamma dt
 \\
 \ns\ds   \leq
    C \mathbb{E} \int_{\Sigma} s \lambda \xi \theta^{2} \big(
        |\nabla \Delta v|^{2}
 + s^{2}\lambda^{2} \xi^{2} |\Delta v|^{2}
\big) d\Gamma dt.
\end{array}
\end{equation}

Thanks to \cref{eq.s2-2-3,eq.s3-0-2,eq.s3-00-2,eq.s3-1-2}, for $ \lambda \geq \lambda_{3} $ and $ s \geq s_{1} $,  we get that
\begin{align} \label{eq.1126-1}
    \notag
    & C \mathbb{E} \int_{\Sigma} s \lambda \xi \theta^{2} \big(
        |\nabla \Delta v|^{2}
        + s^{2}\lambda^{2} \xi^{2} |\Delta v|^{2}
    \big) d\Gamma dt
    + \mathcal{A}
    \\ 
    &
    + C \mathbb{E} \int_{Q} \theta^{2} \big(
        s^{6} \lambda^{6} \xi^{6} f_{1}^{2}
        + s^{4} \lambda^{4} \xi^{4} |\nabla f_{1}|^{2}
        + s^{2} \lambda^{2} \xi^{2} |\nabla^{2} f_{1}|^{2}
        + f_{2}^{2}
        +s^{6} \lambda^{6} \xi^{6} g_{1}^{2}
    \\ \notag
    & \qquad \qquad \quad ~ +s^{4} \lambda^{4} \xi^{4} |\nabla g_{1}|^{2}
    +s^{2} \lambda^{2} \xi^{2} |\nabla^{2} g_{1}|^{2}
    +|\nabla \Delta g_{1}|^{2}
    +s^{2} \lambda^{2} \xi^{2} |g_{2}|^{2}
    \big) dxdt
    \\[1mm] \notag
    & \geq
    \mathbb{E} \int_{Q} \big(
        \lambda |\nabla \Delta w|^{2}
        + s^{3} \lambda^{4} \xi ^{3} \theta^{2} | \Delta v|^{2}
        +  s \lambda \xi |\nabla \hat{w}|^{2}
        + s^{3} \lambda^{\frac{7}{2}} \xi^{3}  \hat{w}^{2}
    \big) dxdt.
\end{align}

By \cref{lemma.ellipticE-2}, for $ \lambda \geq \max \{ \lambda_{3}, \widehat{\lambda} \} $ and $ s \geq \max \{ s_{1}, \widehat{s} \} $, we obtain that
\begin{align*} 
    \mathbb{E} \int_{Q}  (
        s^{6} \lambda^{8}  \xi^{6} \theta^{2}| v|^{2}
    +   s^{4} \lambda^{6}  \xi^{4} \theta^{2}|\nabla v|^{2}  )dxdt
    \leq     
    C \mathbb{E} \int_{Q}  s^{3} \lambda^{4} \xi ^{3} \theta^{2} | \Delta v|^{2} dx dt,
\end{align*}
which, together with \cref{eq.1126-1},  implies that
\begin{align} \label{eq.1126-3}
    \notag
    & C \mathbb{E} \int_{\Sigma} s \lambda \xi \theta^{2} \big(
        |\nabla \Delta v|^{2}
        + s^{2}\lambda^{2} \xi^{2} |\Delta v|^{2}
    \big) d\Gamma dt
    + \mathcal{A}
    \\ 
    &
    + C \mathbb{E} \int_{Q} \theta^{2} \big(
        s^{6} \lambda^{6} \xi^{6} f_{1}^{2}
        + s^{4} \lambda^{4} \xi^{4} |\nabla f_{1}|^{2}
        + s^{2} \lambda^{2} \xi^{2} |\nabla^{2} f_{1}|^{2}
        + f_{2}^{2}
        +s^{6} \lambda^{6} \xi^{6} g_{1}^{2}
    \\ \notag
    & \qquad \qquad \quad ~ +s^{4} \lambda^{4} \xi^{4} |\nabla g_{1}|^{2}
    +s^{2} \lambda^{2} \xi^{2} |\nabla^{2} g_{1}|^{2}
    +|\nabla \Delta g_{1}|^{2}
    +s^{2} \lambda^{2} \xi^{2} |g_{2}|^{2}
    \big) dxdt
    \\[1mm] \notag
    & \geq
    \mathbb{E} \int_{Q} \big(
        \lambda |\nabla \Delta w|^{2}
        + s^{3} \lambda^{4} \xi ^{3} \theta^{2} | \Delta v|^{2}
        +   s^{4} \lambda^{6}  \xi^{4} \theta^{2}|\nabla v|^{2}
        +  s^{6} \lambda^{8}  \xi^{6} \theta^{2}| v|^{2}
        \\[-1mm] \notag
    & \qquad \qquad 
        +  s \lambda \xi |\nabla \hat{w}|^{2}
        + s^{3} \lambda^{\frac{7}{2}} \xi^{3}  \hat{w}^{2}
    \big) dxdt.
\end{align}

Let $ \tilde{v} = s \lambda^{2} \xi e^{s \xi} v$. Then, we have that
\begin{align}\notag
    &\mathbb{E} \int_{Q}  s^{2} \lambda^{4}  \xi^{2} \theta^{2}| \nabla^{2} v|^{2}  dxdt \\ \notag
    &  
    = \mathbb{E} \int_{Q}  s^{2} \lambda^{4}  \xi^{2} \theta^{2}| \nabla^{2} ( s^{-1} \lambda^{-2} \xi^{-1} \theta^{-1} \tilde{v} )|^{2}  dxdt 
    \\ \notag
    &
    \leq C \mathbb{E} \int_{Q}  s^{2} \lambda^{4}  \xi^{2} ( 
        s^{-2} \lambda^{-4} \xi^{-2} |\nabla^{2} \tilde{v} |^{2}
        + \lambda^{-2}  |\nabla \tilde{v} |^{2}
        + s^{2}  \xi^{2} |\tilde{v} |^{2}
     )  dxdt 
    \\ \label{eq.s3-4-2}
    &
    \leq C |\tilde{v}|^{2}_{L^{2}_{\mathbb{F}}( 0,T; H^{2}(G) )} 
    + C \mathbb{E} \int_{Q}  s^{2} \lambda^{2}  \xi^{2}  |\nabla \tilde{v} |^{2} dxdt 
    + C \mathbb{E} \int_{Q}  s^{4} \lambda^{4}  \xi^{4}  | \tilde{v} |^{2} dxdt 
    \\ \notag
    &
    \leq C |\tilde{v}|^{2}_{L^{2}_{\mathbb{F}}( 0,T; H^{2}(G) )} 
    + C \mathbb{E} \int_{Q}  s^{2} \lambda^{2}  \xi^{2}  ( 
        s^{4} \lambda^{6}  \xi^{4} \theta^{2} v^{2}
        + s^{2} \lambda^{4}  \xi^{2} \theta^{2} |\nabla v|^{2}
        ) dxdt \\ \notag
        & \quad
    + C \mathbb{E} \int_{Q}  s^{6} \lambda^{8}  \xi^{6} \theta^{2} | v |^{2} dxdt 
    \\ \notag
    &
    \leq 
    C |\tilde{v}|^{2}_{L^{2}_{\mathbb{F}}( 0,T; H^{2}(G) )} 
    + C \mathbb{E} \int_{Q}  (
        s^{6} \lambda^{8}  \xi^{6} \theta^{2}| v|^{2}
    +   s^{4} \lambda^{6}  \xi^{4} \theta^{2}|\nabla v|^{2}  )dxdt.
\end{align}
It follows from $ \tilde{v} =0 $ on $ \Sigma $ that
\begin{equation}\label{eq.s3-3-2}
\begin{array}{ll}\ds
        |\tilde{v}|^{2}_{L^{2}_{\mathbb{F}}( 0,T; H^{2}(G) )} \\[1mm]
        \ns\ds \leq C |\Delta \tilde{v}|^{2}_{L^{2}_{\mathbb{F}}( 0,T; L^{2}(G) )}\\
        \ns\ds \leq C \mathbb{E} \int_{Q}  \big(s^{6} \lambda^{8}  \xi^{6} \theta^{2}| v|^{2}+s^{4} \lambda^{6}  \xi^{4} \theta^{2}|\nabla v|^{2} + s^{2} \lambda^{4}  \xi^{2} \theta^{2}| \Delta v|^{2} \big)dxdt.
\end{array}
\end{equation}
 
Combining \cref{eq.s3-4-2,eq.s3-3-2}, we obtain that 
\begin{align}\label{eq.1126-4}
    \mathbb{E} \int_{Q}  s^{2} \lambda^{4}  \xi^{2} \theta^{2}| \nabla^{2} v|^{2}  dxdt
    \leq C \mathbb{E} \int_{Q}  \big(s^{6} \lambda^{8}  \xi^{6} \theta^{2}| v|^{2}+s^{4} \lambda^{6}  \xi^{4} \theta^{2}|\nabla v|^{2} + s^{2} \lambda^{4}  \xi^{2} \theta^{2}| \Delta v|^{2} \big)dxdt.
\end{align}
From \cref{eq.1126-3,eq.1126-4}, there exists $ \lambda_{4} \geq \max \{ \lambda_{3}, \widehat{\lambda} \} $ such that  for all  $ \lambda \geq \lambda_{4} $, there is an $ s_{2} =  s_{2}(\lambda) > \max \{ s_{1}, \widehat{s} \} $, such that for all $ s \geq s_{2} $, we have that
\begin{align}\label{eq.1126-5}
    \notag
    & C \mathbb{E} \int_{\Sigma} s \lambda \xi \theta^{2} \big(
        |\nabla \Delta v|^{2}
        + s^{2}\lambda^{2} \xi^{2} |\Delta v|^{2}
    \big) d\Gamma dt
    + \mathcal{A}
    \\ \notag
    &
    + C \mathbb{E} \int_{Q} \theta^{2} \big(
        s^{6} \lambda^{6} \xi^{6} f_{1}^{2}
        + s^{4} \lambda^{4} \xi^{4} |\nabla f_{1}|^{2}
        + s^{2} \lambda^{2} \xi^{2} |\nabla^{2} f_{1}|^{2}
        + f_{2}^{2}
        +s^{6} \lambda^{6} \xi^{6} g_{1}^{2}
    \\ 
    & \qquad \qquad \quad ~ +s^{4} \lambda^{4} \xi^{4} |\nabla g_{1}|^{2}
    +s^{2} \lambda^{2} \xi^{2} |\nabla^{2} g_{1}|^{2}
    +|\nabla \Delta g_{1}|^{2}
    +s^{2} \lambda^{2} \xi^{2} |g_{2}|^{2}
    \big) dxdt
    \\[1mm] \notag
    & \geq
    \mathbb{E} \int_{Q} \big(
        \lambda |\nabla \Delta w|^{2}
        + s^{3} \lambda^{4} \xi ^{3} \theta^{2} | \Delta v|^{2}
        + s^{2} \lambda^{4}  \xi^{2} \theta^{2}| \nabla^{2} v|^{2} 
        +   s^{4} \lambda^{6}  \xi^{4} \theta^{2}|\nabla v|^{2}
        +  s^{6} \lambda^{8}  \xi^{6} \theta^{2}| v|^{2}
        \\ \notag
    & \qquad \qquad 
        +  s \lambda \xi |\nabla \hat{w}|^{2}
        + s^{3} \lambda^{\frac{7}{2}} \xi^{3}  \hat{w}^{2}
    \big) dxdt.
\end{align}

Recalling $v= \theta^{-1} w$ and $\hat{v}=\theta^{-1}( \hat{w} - \ell_{t} w)$, we get that
\begin{align}\notag
    & \mathbb{E} \int_{Q} \theta^{2} \big(
        \lambda |\nabla \Delta v|^{2}
        +  s \lambda \xi  |\nabla  \hat{v}|^{2}
        +  s^{3} \lambda^{\frac{7}{2}} \xi^{3} |\hat{v}|^{2}
    \big) dxdt
    \\ \label{eq.s3-5-2}
    & = \mathbb{E} \int_{Q} \theta^{2} \big(
        \lambda |\nabla \Delta (\theta^{-1} w)|^{2}
        +  s \lambda \xi  |\nabla  [\theta^{-1}( \hat{w} - \ell_{t} w)]|^{2}
        +  s^{3} \lambda^{\frac{7}{2}} \xi^{3} \theta^{-2} | \hat{w} - \ell_{t} w|^{2}
    \big) dxdt
    \\ \notag
    & \leq
    C \mathbb{E} \int_{Q} \big[
        \lambda \big(
            |\nabla \Delta w|^{2}
            + s^{2} \lambda^{2} \xi^{2} |\nabla^{2} w|^{2}
            + s^{4} \lambda^{4} \xi^{4} |\nabla w|^{2}
            + s^{6} \lambda^{6} \xi^{6} | w|^{2}
        \big)
        +  s^{3} \lambda^{\frac{7}{2}} \xi^{3} \big(
            |\hat{w}|^{2}
            + s^{2} \lambda^{2} \xi^{2} | w|^{2}
        \big)
        \\[1mm] \notag
        & \qquad \qquad ~
        + s \lambda \xi \big(
            | \nabla \hat{w}|^{2}
            +  s^{2} \lambda^{2} \xi^{2} |\hat{w}|^{2}
            +  s^{2} \lambda^{2} \xi^{2} |\nabla w|^{2}
            +  s^{4} \lambda^{4} \xi^{4} | w|^{2}
        \big)
    \big] dxdt
    \\[2mm] \notag
    & \leq
    C \mathbb{E} \int_{Q} \big(
        \lambda |\nabla \Delta w|^{2}
        +  s \lambda \xi  |\nabla  \hat{w}|^{2}
        +  s^{3} \lambda^{\frac{7}{2}} \xi^{3} |\hat{w}|^{2}
    \big) dxdt
    + \mathcal{A}.
\end{align}
Thanks to \cref{eq.AB-2,prop.eta}, there exists $ \lambda_{5} > 0 $ such that  for all  $ \lambda \geq \lambda_{5} $, there is an $ s_{3} = s_{3}(\lambda) > 0 $, such that for all $ s \geq s_{3} $, we have
\begin{align}\notag
    \mathcal{A}
    &\leq \frac{1}{C} \mathbb{E} \int_{Q} \theta^{2} \bigl(
        s \lambda \xi |\nabla \hat{v}|^{2}
      + s^{3} \lambda^{\frac{7}{2}} \xi^{3} | \hat{v}|^{2}
      + s^{2} \lambda^{4} \xi^{2} |\nabla^{2} v|^{2}
      + s^{3} \lambda^{4} \xi^{3} |\Delta  v|^{2}
      \\\label{eq.s3-6-2}
      & \qquad \qquad \quad\;
      + s^{4} \lambda^{6} \xi^{4} |\nabla  v|^{2}
      + s^{6} \lambda^{8} \xi^{6} |  v|^{2}
  \bigr) dxdt.
\end{align}

Let us choose $ \lambda_{0} \geq \max\{\lambda_{4}, \lambda_{5} \} $. Combining \cref{eq.1126-5,eq.s3-5-2,eq.s3-6-2}, for all $ \lambda \geq \lambda_{0} $, one can find $ s_{0} = s_{0}(\lambda) \geq \max\{ s_{2}, s_{3}  \} $ so that for any $ s \geq s_{0} $, inequality \cref{eq.08012022} holds.
\end{proof}

\section{Proof of the observability estimate}\label{sc.4}

\begin{proof}[Proof of \cref{thm.ObeF}]
  Let $ \chi \in C_{0}^{\infty}([0,T]) $ satisfy 
    \begin{equation*}
        \chi = 1 \text{ in } \bigg(\frac{T}{2}-\varepsilon_{1}T, \frac{T}{2}+\varepsilon_{1}T \bigg).
    \end{equation*}
    Put $ v = \chi z $ and $ \hat{v} = \chi \hat{z} + \chi_{t}z $ for $ (z, \hat{z})  $ satisfying \cref{eq.reference}, then $ (v, \hat{v}) $ fulfills $v(0,\cdot)=v(T,\cdot)=\hat{v}(0,\cdot)=\hat{v}(T,\cdot)=0$    in $G$, and solves
    \begin{equation}\label{eq.hII2F}
        \left\{
        \begin{array}{ll}\ds
           d v = \hat{v} dt + \chi (Z- a_{5} z) d W(t) &  \text { in } Q, \\
          \ns\ds d \hat{v}+\Delta^2 v d t= \tilde{f}_{2} d t + \tilde{g}_{2}  d W(t) &  \text { in } Q, \\
          \ns\ds v= \frac{\partial v}{ \partial \nu}=0 & \text { on } \Sigma,
        \end{array}
        \right.
    \end{equation}
    where $$ \tilde{f}_{2} = \chi [(a_{1} - \operatorname{div} a_{2} -a_{4}a_{5}) z - a_{2} \nabla z- a_{3} \hat{Z} + a_{4} Z ] + 2 \chi_{t} \hat{z} + \chi_{tt} z $$ and $$ \tilde{g}_{2} = \chi \hat{Z} +   \chi_{t} (Z - a_{5} z ).  $$

By \cref{thm.ceII}, for $ \lambda \geq \lambda_{0} $ and $ s \geq s_{0} $, we have
    \begin{align*}
        &\mathbb{E} \int_{Q} \theta^{2} \chi^{2} \bigl(
            \lambda |\nabla \Delta z|^{2}
            + s^{2} \lambda^{4} \xi^{2} |\nabla^{2} z|^{2}
            + s^{3} \lambda^{4} \xi^{3} |\Delta  z|^{2}
            + s^{4} \lambda^{6} \xi^{4} |\nabla  z|^{2}
            + s^{6} \lambda^{8} \xi^{6} |  z|^{2}
        \bigr) dxdt
        \\ \notag
        & \leq \mathbb{E} \int_{Q} \theta^{2} \bigl(
              s \lambda \xi |\nabla \hat{v}|^{2}
            + s^{3} \lambda^{\frac{7}{2}} \xi^{3} | \hat{v}|^{2}
            +  \lambda |\nabla \Delta v|^{2}
            + s^{2} \lambda^{4} \xi^{2} |\nabla^{2} v|^{2}
            + s^{3} \lambda^{4} \xi^{3} |\Delta  v|^{2}
            \\[1mm] \notag
            & \qquad \qquad ~ + s^{4} \lambda^{6} \xi^{4} |\nabla  v|^{2}
            + s^{6} \lambda^{8} \xi^{6} |v|^{2}
        \bigr) dxdt
        \\[2mm] \notag
        &
        \leq
        C \mathbb{E} \int_{Q} \theta^{2} \chi^{2} \big(
            s^{6} \lambda^{6} \xi^{6} z^{2}
            +s^{4} \lambda^{4} \xi^{4} |\nabla z|^{2}
            +s^{2} \lambda^{2} \xi^{2} |\nabla^{2} z|^{2}
            +|\nabla \Delta z|^{2}
            + |z|^{2}
            + |\nabla z|^{2}
            \big) dxdt
            \\ \notag
            & \quad
        + C \mathbb{E} \int_{J_{1}} \theta^{2} \bigr(
        |\hat{z}|^{2} + z^{2}  + s^{2} \lambda ^{2} \xi^{2} z^{2}
        \bigl)dx dt
        + C (s, \lambda) |(Z, \hat{Z})|^{2}_{L^{2}_{\mathbb{F}}(0, T;H^{3}(G) )\times L^{2}_{\mathbb{F}}(0, T; H^{1}(G))}
            \\
            & \quad
        + C \mathbb{E} \int_{\Sigma} \theta^{2} \bigr(
            s \lambda \xi |\nabla \Delta z|^{2}
            + s^{3} \lambda^{3} \xi^{3} |\Delta z|^{2}
        \bigl)d\Gamma dt
        .
    \end{align*}
    This, together with  \cref{prop.eta}, implies that there exists $ \tilde{\lambda}_{1} \geq  \lambda_{0} $ such that  for all  $ \lambda \geq \tilde{\lambda}_{1} $, there is  $ \tilde{s}_{1} = \tilde{s}_{1}(\lambda) \geq s_{0} $, so that for any $ s \geq \tilde{s}_{1}$, it holds that 
    \begin{align} \notag
        & \mathbb{E} \int_{Q} \theta^{2} \bigl(
              s \lambda \xi |\nabla \hat{v}|^{2}
            + s^{3} \lambda^{\frac{7}{2}} \xi^{3} | \hat{v}|^{2}
            +  \lambda |\nabla \Delta v|^{2}
            + s^{2} \lambda^{4} \xi^{2} |\nabla^{2} v|^{2}
            + s^{3} \lambda^{4} \xi^{3} |\Delta v|^{2}
            \\ \label{eq.thm1.5.1F}
            & \qquad \quad + s^{4} \lambda^{6} \xi^{4} |\nabla v|^{2}
            + s^{6} \lambda^{8} \xi^{6} |v|^{2}
        \bigr) dxdt
        \\[2mm] \notag
        & \leq C  \mathbb{E} \int_{J_{1}} \theta^{2} \bigr(
            |\hat{z}|^{2} + z^{2}  + s^{2} \lambda ^{2} \xi^{2} z^{2}
        \bigl)dx dt
        + C \mathbb{E} \int_{\Sigma} \theta^{2} \bigr(
            s \lambda \xi |\nabla \Delta z|^{2}
            + s^{3} \lambda^{3} \xi^{3} |\Delta z|^{2}
        \bigl)d\Gamma dt
        \\ \notag
        & \quad
        + C (s, \lambda) |(Z, \hat{Z})|^{2}_{L^{2}_{\mathbb{F}}(0, T;H^{3}(G) )\times L^{2}_{\mathbb{F}}(0, T; H^{1}(G))}
        .
    \end{align}
    Thanks to \cref{prop.eta}, we obtain
    \begin{align} \notag
        & e^{2se^{\lambda \delta} - C \lambda - 6 \ln s}\mathbb{E} \int_{J_{2}}  \bigl(
               |\nabla \Delta z|^{2} +  |\nabla ^{2} z|^{2} + |\nabla z|^{2} + z^{2}
               + |\nabla \hat{z} |^{2} + |\hat{z}|^{2}
            \bigr) dxdt
        \\ \notag
        & \leq
        e^{2se^{\lambda \delta}}  \mathbb{E} \int_{J_{2}}  \bigl(
            s \lambda \xi |\nabla \hat{v}|^{2}
          + s^{3} \lambda^{\frac{7}{2}} \xi^{3} | \hat{v}|^{2}
          +  \lambda |\nabla \Delta v|^{2}
          + s^{2} \lambda^{4} \xi^{2} |\nabla^{2} v|^{2}
          + s^{3} \lambda^{4} \xi^{3} |\Delta  v|^{2}
          \\[1mm] \label{eq.obEP12F}
          & \qquad \qquad \qquad + s^{4} \lambda^{6} \xi^{4} |\nabla v|^{2}
          + s^{6} \lambda^{8} \xi^{6} |v|^{2}
      \bigr) dxdt
        \\[1.5mm]  \notag
        & \leq
        \mathbb{E} \int_{Q} \theta^{2} \bigl(
              s \lambda \xi |\nabla \hat{v}|^{2}
            + s^{3} \lambda^{\frac{7}{2}} \xi^{3} | \hat{v}|^{2}
            +  \lambda |\nabla \Delta v|^{2}
            + s^{2} \lambda^{4} \xi^{2} |\nabla^{2} v|^{2}
            + s^{3} \lambda^{4} \xi^{3} |\Delta v|^{2} 
            \\[1mm] \notag
            & \qquad \qquad ~ + s^{4} \lambda^{6} \xi^{4} |\nabla v|^{2}
            + s^{6} \lambda^{8} \xi^{6} |v|^{2}
        \bigr) dxdt.
    \end{align}
From \cref{prop.energyEst,prop.eta}, we see that
\begin{align}  \notag
   & |(z^{T},\hat{z}^{T})|_{L^{2}_{\mathcal{F}_{T}}(\Omega; H^{3}(G) \cap H_{0}^{2}(G) )\times L^{2}_{\mathcal{F}_{T}}(\Omega; H_{0}^{1}(G))}\\ \label{eq.obEP22F}
    & \leq
    C
        \mathbb{E} \int_{J_{2}}  \bigl(
              |\nabla \Delta z|^{2} +  |\nabla ^{2} z|^{2} + |\nabla z|^{2} + z^{2}
              + |\nabla \hat{z}|^{2} + |\hat{z}|^{2}
            \bigr) dxdt
    \\ \notag
    &\quad   + C (s, \lambda) |(Z, \hat{Z})|^{2}_{L^{2}_{\mathbb{F}}(0, T;H^{3}(G) \cap H_{0}^{2}(G))\times L^{2}_{\mathbb{F}}(0, T; H_{0}^{1}(G))}
    ,
\end{align}
and that
\begin{eqnarray}\label{eq.obEP32F}
&&\mathbb{E} \int_{J_{1}} \theta^{2} \big(|\hat{z}|^{2} + z^{2}  + s^{2} \lambda ^{2} \xi^{2} z^{2}\big) dxdt \nonumber\\
&& \ds\leq e^{2s e^{-\lambda \delta}  + C \lambda + 2 \ln s} \mathbb{E} \int_{J_{1}} \big(|\hat{z}|^{2} + z^{2} \big) dxdt
    \\ 
&&\leq C e^{2s e^{-\lambda \delta}  + C \lambda + 2 \ln s }
    |(z^{T},\hat{z}^{T})|_{L^{2}_{\mathcal{F}_{T}}(\Omega; H^{3}(G) \cap H_{0}^{2}(G) )\times L^{2}_{\mathcal{F}_{T}}(\Omega; H_{0}^{1}(G))}\nonumber\\[0.5mm]
&&\quad +  C (s, \lambda) |(Z, \hat{Z})|^{2}_{L^{2}_{\mathbb{F}}(0, T;H^{3}(G) \cap H_{0}^{2}(G) ) \times L^{2}_{\mathbb{F}}(0, T;H_{0}^{1}(G) )}.\nonumber
\end{eqnarray}

Combining \eqref{eq.thm1.5.1F}--\eqref{eq.obEP32F} choosing $ \lambda \geq \tilde{\lambda}_{1} $ and $ s \geq \tilde{s}_{1} $ such that
\begin{align*}
    C \exp\big(2 s e^{-\lambda \delta} -2 s e^{\lambda \delta} + C \lambda + 8 \ln s\big) \leq \frac{1}{2},
\end{align*}
we get the desired observability estimate.
\end{proof}

\section{Proof of Theorem 1.4}
\label{sc.5}

The proof of \cref{thm.negativeControl} is similar to that for \cite[Theorem 2.3]{Lue2019}. We provide it here for the convenience of the readers. To begin with, we recall the following result.
\begin{lemma}\label{lemma.peng}\cite[Lemma 2.1]{Peng1994}
    There exists a random variable $ \zeta \in L^{2}_{\mathcal{F}_{T}} (\Omega) $ such that it is impossible to find $ \varsigma_{1}, \varsigma_{2} \in L^{2}_{\mathbb{F}}(0,T) \times C_{\mathbb{F}}([0,T]; L^{2}(\Omega)) $ and $ \alpha \in \mathbb{R} $ satisfying
    \begin{align*}
        \zeta = \alpha + \int_{0}^{T} \varsigma_{1}(t) dt + \int_{0}^{T} \varsigma_{2}(t) dW(t).
    \end{align*}
\end{lemma}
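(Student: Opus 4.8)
The plan is to argue by contradiction and reduce the whole question to a single quantitative property of the martingale representation of $\zeta$ near the terminal time $T$. Suppose some $\zeta \in L^2_{\mathcal{F}_T}(\Omega)$ admitted a representation $\zeta = \alpha + \int_0^T \varsigma_1(t)\,dt + \int_0^T \varsigma_2(t)\,dW(t)$ with $\alpha \in \mathbb{R}$, $\varsigma_1 \in L^2_{\mathbb{F}}(0,T)$ and $\varsigma_2 \in C_{\mathbb{F}}([0,T];L^2(\Omega))$. Let $M_t = \mathbb{E}[\zeta\mid\mathcal{F}_t]$ and invoke the martingale representation theorem to write $M_t = \mathbb{E}\zeta + \int_0^t \phi(s)\,dW(s)$ for a unique $\phi \in L^2_{\mathbb{F}}(0,T)$; in particular $\zeta - M_t = \int_t^T \phi\,dW$ and, by the It\^o isometry, $\mathbb{E}\int_t^T \phi^2\,ds = \mathbb{E}[(\zeta - M_t)^2]$. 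The goal is to show that the assumed representation forces $\mathbb{E}\int_t^T \phi^2\,ds = O(T-t)$ as $t \uparrow T$, and then to exhibit a concrete $\zeta$ for which this fails.

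For the key inequality, I would first write $\zeta = X_t + \int_t^T \varsigma_1\,ds + \int_t^T \varsigma_2\,dW$ where $X_t := \alpha + \int_0^t\varsigma_1\,ds + \int_0^t\varsigma_2\,dW$ is $\mathcal{F}_t$-measurable, so that $M_t = X_t + \mathbb{E}[\int_t^T\varsigma_1\,ds\mid\mathcal{F}_t]$ and hence $\int_t^T\phi\,dW = \zeta - M_t = R_t + \int_t^T \varsigma_2\,dW$, where $R_t := \int_t^T\varsigma_1\,ds - \mathbb{E}[\int_t^T\varsigma_1\,ds\mid\mathcal{F}_t]$. Taking second moments, using $(a+b)^2\le 2a^2+2b^2$, the It\^o isometry, the variance bound $\mathbb{E}[R_t^2] \le \mathbb{E}[(\int_t^T\varsigma_1\,ds)^2]$ and Cauchy--Schwarz, I obtain
\begin{equation*}
\mathbb{E}\int_t^T \phi^2\,ds \le 2\,\mathbb{E}\int_t^T \varsigma_2^2\,ds + 2(T-t)\,\mathbb{E}\int_0^T \varsigma_1^2\,ds .
\end{equation*}
Since $\varsigma_2 \in C_{\mathbb{F}}([0,T];L^2(\Omega))$ is a continuous map of the compact interval $[0,T]$ into $L^2(\Omega)$, it is bounded, so $K := \sup_{s}\mathbb{E}[\varsigma_2(s)^2] < \infty$ and $\mathbb{E}\int_t^T\varsigma_2^2\,ds \le K(T-t)$. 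This yields $\mathbb{E}\int_t^T\phi^2\,ds \le C(T-t)$ for all $t$, i.e. the advertised $O(T-t)$ bound.

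It remains to produce $\zeta$ violating this bound. I would take $\zeta = \mathbf{1}_{\{W(T)>0\}} \in L^2_{\mathcal{F}_T}(\Omega)$. Here $M_t = \Phi(W(t)/\sqrt{T-t})$ with $\Phi$ the standard normal distribution function, and $\mathbb{E}[(\zeta - M_t)^2] = \mathbb{E}[M_t(1-M_t)]$ is the expected conditional variance of a Bernoulli variable. As $t\uparrow T$ one has $M_t(1-M_t)\to 0$, but $M_t(1-M_t)$ is bounded below by a positive constant on the event $\{|W(t)|<\sqrt{T-t}\}$, whose probability is of order $\sqrt{T-t}$; hence $\mathbb{E}\int_t^T\phi^2\,ds = \mathbb{E}[M_t(1-M_t)] \ge c\sqrt{T-t}$, which is not $O(T-t)$. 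This contradicts the bound of the previous paragraph, so no representation of the required form exists for this $\zeta$, proving the lemma.

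The main obstacle is the clean derivation of the key inequality --- in particular keeping track of the conditional-expectation correction $R_t$ and confirming that the drift contributes only at order $T-t$ while the continuity hypothesis on $\varsigma_2$ is exactly what bounds the diffusion contribution by $K(T-t)$. The remaining effort is the elementary Gaussian estimate $\mathbb{E}[M_t(1-M_t)]\asymp \sqrt{T-t}$, which quantifies that the representation integrand of $\mathbf{1}_{\{W(T)>0\}}$ blows up in mean square like $(T-t)^{-1/2}$ near $T$ and therefore cannot be absorbed by any admissible pair $(\varsigma_1,\varsigma_2)$.
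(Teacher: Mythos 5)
The paper does not prove this lemma at all: it is quoted verbatim from \cite[Lemma 2.1]{Peng1994} and used as a black box, so there is no in-paper argument to compare yours against. Your proof is correct and self-contained, and it follows the standard line of reasoning behind Peng's counterexample. The two halves both check out: (i) the hypothesis $\varsigma_2\in C_{\mathbb F}([0,T];L^2(\Omega))$ gives $\sup_s\mathbb E[\varsigma_2(s)^2]<\infty$ by continuity on a compact interval, and together with the Cauchy--Schwarz and conditional-variance bounds on the drift remainder $R_t$ this forces $\mathbb E\int_t^T\phi^2\,ds=\mathbb E[(\zeta-M_t)^2]\le C(T-t)$ for the martingale-representation integrand $\phi$ of any representable $\zeta$; (ii) for $\zeta=\mathbf 1_{\{W(T)>0\}}$ one has $\mathbb E[(\zeta-M_t)^2]=\mathbb E[M_t(1-M_t)]\ge \Phi(-1)\bigl(1-\Phi(1)\bigr)\,\mathbb P\bigl(|W(t)|<\sqrt{T-t}\bigr)\ge c\sqrt{T-t}$ for $t$ near $T$, which is incompatible with the $O(T-t)$ bound. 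The only point worth making explicit in a final write-up is that $\mathbb E[\int_t^T\varsigma_2\,dW\mid\mathcal F_t]=0$ because boundedness of $s\mapsto\mathbb E[\varsigma_2(s)^2]$ places $\varsigma_2$ in $L^2_{\mathbb F}(0,T)$, so the stochastic integral is a genuine square-integrable martingale; you use this implicitly when identifying $M_t$. This is a complete proof of the cited result, arguably more informative than the citation itself.
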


\begin{proof}[Proof of \cref{thm.negativeControl}]

We employ a contradiction argument and divide the proof into three cases.

\medskip

\emph{Case 1:  $ a_{3} \in C_{\mathbb{F}} ([0,T]; L^{\infty}(G)) $, $ G \backslash \overline{G_{0}} \neq \emptyset $ and $ \operatorname{supp} f \subset G_{0} $}.

\smallskip

Let $ \rho \in C_{0}^{\infty}(G\backslash G_{0}) $ satisfying $ |\rho|_{L^{2}(G)} = 1 $.
Suppose that \cref{eq.eqIIICon} was exactly controllable. By \cref{def.control}, for $ (y_{0}, \hat{y}_{0}) = (0,0) $, there exist  controls $ (f, g, h_{1}, h_{2})  $ with $ \operatorname{supp} f \subset G_{0} $ a.e. $ (t,\omega) \in (0,T) \times \Omega $ such that the solution to \cref{eq.eqIIICon} fulfills $ (y(T), \hat{y}(T)) = (\rho \zeta, 0) $, where $ \zeta $ is given in \cref{lemma.peng}.
Hence,
\begin{align}
    \label{eq.noLessControl1}
    \rho \zeta = \int_{0}^{T} \hat{y} dt + \int_{0}^{T} (a_{3}y + f) dW(t).
\end{align}
Multiplying \cref{eq.noLessControl1} by $ \rho $ and integrating it in $ G $, we arrive that
\begin{align*}
    \zeta = \int_{0}^{T} \langle \hat{y}, \rho \rangle _{(H^{3}(G)\cap H_{0}^{2}(G))^{*}, H^{3}(G)\cap H_{0}^{2}(G) }  dt + \int_{0}^{T} \langle a_{3}y , \rho \rangle_{H^{-1}(G), H_{0}^{1}(G)} dW(t),
\end{align*}
which contradicts \cref{lemma.peng}.

\medskip

\emph{Case 2:  $ a_{4} \in C_{\mathbb{F}} ([0,T]; L^{\infty}(G)) $, $ G \backslash \overline{G_{0}} \neq \emptyset $ and $ \operatorname{supp} g \subset G_{0} $}.

\smallskip

Choose $ \rho $ as in Case 1.
Assume that \cref{eq.eqIIICon} was exactly controllable. Then, for $ (y_{0}, \hat{y}_{0}) = (0,0) $, there exist controls $ (f, g, h_{1}, h_{2})  $ with $ \operatorname{supp} g \subset G_{0} $ a.e. $ (t,\omega) \in (0,T) \times \Omega $ such that the solution to \cref{eq.eqIIICon} fulfills $ (y(T), \hat{y}(T)) = (0, \zeta) $.

Clearly, $ (\phi, \hat{\phi}) \deq (\rho y, \rho \hat{y}) $ satisfies
\begin{align*}
    \left\{
    \begin{alignedat}{2}
        &d \phi = \hat{\phi} d t +(a_{3} \phi + \rho f) d W(t) && \quad \text { in } Q, \\
        &d \hat{\phi} +\Delta^2 \phi d t = \tilde{f}_{2}  d t + a_{4} \phi d W(t) && \quad \text { in } Q, \\
        &\phi = \frac{\partial \phi}{ \partial \nu}= 0 && \quad \text { on } \Sigma, \\
        & (\phi(0), \hat{\phi}(0)) = (0,0) && \quad \text{ in } G,
    \end{alignedat}
    \right.
\end{align*}
where $ \tilde{f}_{2} = [\Delta^{2} , \rho] y + a_{1} \phi + \rho a_{2} \cdot \nabla \phi  $.
Furthermore, we have $ (\phi(T), \hat{\phi}(T)) = (0, \rho \zeta) $.
Hence, we have
\begin{align*}
    \zeta = - \int_{0}^{T} ( \langle \Delta^{2} \phi, \rho \rangle_{H^{-5}(G), H_{0}^{5}(G)} + \langle \tilde{f}_{2}, \rho \rangle_{H^{-4} (G), H_{0}^{4}(G)} ) dt
    + \int_{0}^{T} \langle a_{3} \phi, \rho \rangle_{H^{-1} (G), H_{0}^{1}(G)} d W(t),
\end{align*}
which contradicts \cref{lemma.peng}.

\medskip

\emph{Case 3: $ h_{1} = h_{2} = 0 $.}

\smallskip

Assume that \cref{eq.eqIIICon} was exactly controllable.
Then, from the equivalence between the exact controllability of \cref{eq.eqIIICon} and the observability estimate of \cref{eq.reference}, we get that for any $ (z^{T},\hat{z}^{T}) \in L^{2}_{\mathcal{F}_{T}}(\Omega; H^{3}(G) \cap H_{0}^{2}(G)) \times  L^{2}_{\mathcal{F}_{T}}(\Omega; H_{0}^{1} (G) )  $, the solution $ (z,Z, \hat {z}, \hat{Z}) $ to \cref{eq.reference}
( with $ \tau = T $ and $ (z(T), \hat{z}(T)) = (z^{T},\hat{z}^{T}) $) satisfies
\begin{align} \label{eq.non2}
    | (z^{T},\hat{z}^{T}) |_{L^{2}_{\mathcal{F}_{T}}(\Omega; H^{3}(G) \cap H_{0}^{2}(G)) \times  L^{2}_{\mathcal{F}_{T}}(\Omega; H_{0}^{1} (G) ) }
    \leq
    C (
        |Z|_{ L^{2}_{\mathbb{F}}(0,T; H^{3}(G) \cap H^{2}_{0}(G) )}
        + |\hat{Z}|_{ L^{2}_{\mathbb{F}}(0, T; H_{0}^{1} (G) )}
    )
    .
\end{align}
For any nonzero $ (\Phi_{0}, \Phi_{1}) \in (H^{3}(G) \cap H_{0}^{2}(G)) \times H_{0}^{1} (G) $, let $ (\Phi, \hat{\Phi}) $ solve the equation
\vspace{-1mm}
\begin{align*}
    \left\{
        \begin{alignedat}{2}
            &d \Phi = \hat{\Phi} d t  - a_{5} \Phi d W(t) && \quad \text { in } Q, \\
            &d \hat{\Phi} +\Delta^2 \Phi d t = [ (a_{1} - \operatorname{div} a_{2} - a_{4} a_{5}  ) \Phi - a_{2} \cdot \nabla \Phi  ]dt && \quad \text { in } Q, \\
            &\Phi= \frac{\partial \Phi}{ \partial \nu}= 0 && \quad \text { on } \Sigma, \\
            & (\Phi(0), \hat{\Phi}(0)) = (\Phi_{0}, \Phi_{1}) && \quad \text{ in } G
            .
        \end{alignedat}
        \right.
\end{align*}
Clearly, $ (\Phi,0,\hat{\Phi},0) $ solves \cref{eq.reference} with the final datum $  (z^{T}, \hat{z}^{T}) = (\Phi(T), \hat{\Phi}(T))  $, a contradiction to the inequality \cref{eq.non2}.
\end{proof}

\appendix

\section{Proof of the weighted identity}\label{sc.a}

\begin{proof}[Proof of \cref{thm.fundamentalIndentity-3}]
    It is clear that
    \begin{align*}
        d w  = d (\theta v) = \theta d v + \ell_{t} \theta v dt  = \hat{w}  dt + \theta f_{1} dt + \theta g_{1} d W(t) ,
    \end{align*}
    and
    \begin{align} \label{eq.dht-2}
        \notag
        \theta d \hat{v}
        & = \theta d[ \theta^{-1} (\hat{w} - \ell_{t} w) ]
        =
        d \hat{w} - \ell_{t} dw -\ell_{tt} w dt - \ell_{t} \hat{w} dt + \ell_{t}^{2} dt
        \\
        & = d \hat{w} - 2 \ell_{t} \hat{w} dt + (\ell_{t}^{2} - \ell_{tt}) w dt - \ell_{t} \theta f_{1} dt - \ell_{t} \theta g_{t} d W(t)
        .
    \end{align}
    We also have
    \begin{align}
        \notag
    \theta \Delta^{2} v
        & =
        \Delta^{2} w
        -4 s\lambda\xi \nabla \eta \cdot \nabla \Delta w
        -4 s\lambda^{2}\xi  (\nabla^{2}w\nabla\eta\nabla\eta )
        -4 s\lambda\xi \sum\limits_{i,j=1}^{n} \eta_{x_{i}x_{j}} w_{x_{i}x_{j}}
        \\ \notag
        & \quad
        + 2 s^{2}\lambda^{2}\xi^{2} |\nabla\eta|^{2}\Delta w
        -2 s\lambda^{2}\xi |\nabla \eta|^{2} \Delta w  - 2s\lambda\xi  \Delta \eta \Delta w
        + 4 s^{2}\lambda^{2}\xi^{2} ( \nabla^{2}w\nabla\eta\nabla\eta  )
        \\[3mm] \label{eq.space-2}
        & \quad
        -4 \nabla \Delta \ell \cdot \nabla w
        +12 s^{2}\lambda^{3}\xi^{2} |\nabla\eta|^{2} \nabla\eta \nabla w
        +8 s^{2}\lambda^{2}\xi^{2}  ( \nabla^{2}\eta \nabla \eta \nabla w  )
        \\[3mm] \notag
        & \quad
        -4 s^{3} \lambda^{3} \xi^{3} |\nabla\eta|^{2} \nabla\eta \nabla w
        +4 s^{2} \lambda^{2} \xi^{2} \Delta \eta \nabla \eta \nabla w
        + 4  (\nabla \ell \cdot \nabla \Delta \ell ) w
        +2 |\nabla^{2} \ell|^{2} w -\Delta^{2} \ell w
        \\[3mm] \notag
        & \quad
        - 6 s^{3} \lambda ^{4}\xi^{3} |\nabla\eta|^{4} w
        - 4 s ^{3}\lambda ^{3}\xi^{3}  (\nabla^{2}\eta \nabla \eta \nabla \eta ) w
        + s ^{4}\lambda ^{4}\xi^{4} |\nabla \eta|^{4} w
        - 2 s^{3} \lambda ^{3}\xi^{3} |\nabla\eta|^{2} \Delta \eta w 
        + |\Delta \ell|^{2} w.
    \end{align}
    From \cref{eq.I-2,eq.Ic-2,eq.I2-2,eq.I2c-2,eq.dht-2,eq.I3-2,eq.space-2}, we have
    \begin{equation*}
        2\theta I_{2} (d \hat{v} +\Delta^{2} v dt)=2 I_{2} ( I_{1} + I_{2} dt+ I_{3}  ).
    \end{equation*}

    We will compute $ I_{1} I_{2} $ under the form $\sum\limits_{i=1}^{7} \sum\limits_{j=1}^{5} I_{i j}$, where $I_{i j}$ is the product of the $i$-th term of $I_{1}$ with the $j$-th term of $I_{2}$.
    Note that $ I_{i j} $ are the same as Appendix A in \cite{Wang} for $ i=1,\cdots, 6 $ and $ j=1,\cdots,5 $, except for $ I_{13} $.

    We have
    \begin{align*}
        I_{13}
        &= \sum_{i,j,k,l=1}^{n} \Phi_{3}^{kl} w_{x_{i}x_{i}x_{j}x_{j}} w_{x_{k}x_{l}} dt
        \\
        & =
        \sum_{i,j,k,l=1}^{n} \biggl(
            \Phi_{3}^{kl} w_{x_{i}x_{i}x_{j}} w_{x_{k}x_{l}}
            - \Phi_{3}^{kj} w_{x_{i}x_{i}x_{l}} w_{x_{k}x_{l}}
            + \Phi_{3}^{kl} w_{x_{i}x_{i}x_{l}} w_{x_{k}x_{j}}
            - \Phi_{3_{x_{i}}}^{kl} w_{x_{i}x_{j}} w_{x_{k}x_{l}}
            \\
            &\qquad \qquad ~~
            + \Phi_{3_{x_{l}}}^{kj} w_{x_{i}x_{l}} w_{x_{i}x_{k}}
            + \Phi_{3_{x_{l}}}^{kl} w_{x_{i}x_{j}} w_{x_{i}x_{k}}
            -\frac{1}{2} \Phi_{3_{x_{l}}}^{jl} w_{x_{i}x_{k}}^{2}
            - \Phi_{3_{x_{i}}}^{kl} w_{x_{j}x_{l}} w_{x_{i}x_{k}}
            \biggr)_{x_{j}} dt
        \\
        & \quad
        - \sum_{i,j,k,l=1}^{n} \Phi_{3}^{kl}w_{x_{i}x_{i}x_{l}}w_{x_{k}x_{j}x_{j}} dt
        + \sum_{i,j,k,l=1}^{n} \Phi_{3x_{i}x_{j}}^{kl} w_{x_{i}x_{j}}w_{x_{k}x_{l}} dt
        - \sum_{i,j,k,l=1}^{n} \Phi_{3x_{l}x_{j}} ^{kl} w_{x_{i}x_{j}}w_{x_{i}x_{k}} dt
        \\
        & \quad 
        - \sum_{i,j,k,l=1}^{n} \Phi_{3x_{i}x_{l}} ^{kl} w_{x_{i}x_{j}}w_{x_{k}x_{j}} dt 
        + \sum_{i,j,k,l=1}^{n} \Phi_{3x_{i}x_{j}} ^{kl} w_{x_{i}x_{l}}w_{x_{k}x_{j}} dt
        + \frac{1}{2} \sum_{i,j,k,l=1}^{n} \Phi_{3x_{k}x_{l}} ^{kl} w_{x_{i}x_{j}}^{2} dt
        .
    \end{align*}

    We have
    \begin{align*}
        I_{71}
        &= \sum_{i,j=1}^{n} \Phi_{1}^{i}w_{x_{i}x_{j}x_{j}} d \hat{w}
        \\
        & =
        \sum_{i,j=1}^{n} \biggl(
            - \Phi_{1}^{j} \hat{w} d w_{x_{i}x_{i}}
            + \Phi_{1 x_{i}}^{i} \hat{w}  \hat{w}_{x_{j}}
            - \frac{1}{2} \Phi_{1 x_{i} x_{j}}^{i} \hat{w}^{2}
            + \Phi_{1 }^{i} \hat{w}_{x_{i}}  \hat{w}_{x_{j}}
            - \frac{1}{2} \Phi_{1}^{j} \hat {w} _{x_{i}}^{2}
        \biggr)_{x_{j}}dt
        \\
        & \quad
        + \sum_{i,j=1}^{n} d (\Phi^{i}_{1} w_{x_{i}x_{j}x_{j}} \hat{w})
        + \frac{1}{2} \sum_{i,j=1}^{n} \Phi_{1x_{i}x_{j}x_{j}}^{i} \hat{w}^{2} dt
        - \frac{1}{2} \sum_{i,j=1}^{n} \Phi_{1x_{i}}^{i} \hat{w}_{x_{j}}^{2}  dt
        -  \sum_{i,j=1}^{n} \Phi_{1x_{j}}^{i} \hat{w}_{x_{i}}\hat{w}_{x_{j}}  dt
        \\
        & \quad
        - \sum_{i,j=1}^{n} \Phi_{1t}^{i} w_{x_{i}x_{j}x_{j}} \hat{w} dt
        + \sum_{i,j=1}^{n} (\Phi_{1x_{i}}^{i} \hat{w} + \Phi_{1}^{i} \hat{w}_{x_{i}}) [ (\theta f_{1})_{x_{j}x_{j}} dt +  (\theta g_{1})_{x_{j}x_{j}} d W(t) ]
        \\
        & \quad
        - \sum_{i,j=1}^{n} \Phi_{1}^{i} d w_{x_{i}x_{j}x_{j}} d \hat{w}
        ,
        \\
        I_{72}
        &= \sum_{i=1}^{n} \Phi_{2} w_{x_{i}x_{i}} d \hat{w}
        \\
        &=
        \sum_{j=1}^{n} \biggl(
            -\Phi_{2} \hat{w} \hat{w}_{x_{j}}
            + \frac{1}{2} \Phi_{2x_{j}} \hat{w} ^{2}
        \biggr)_{x_{j}}dt
        + \sum_{i=1}^{n} d (\Phi_{2} w_{x_{i}x_{i}} \hat{w} )
        - \sum_{i=1}^{n} \Phi_{2t} w_{x_{i}x_{i}} \hat{w} dt
        \\
        & \quad 
        - \frac{1}{2} \sum_{i=1}^{n} \Phi_{2x_{i}x_{i}} \hat{w}^{2}  dt
        + \sum_{i=1}^{n} \Phi_{2} \hat{w}_{x_{i}}^{2}  dt
        - \sum_{i=1}^{n} \Phi_{2} d w_{x_{i}x_{i}} d \hat{w}
        \\
        & \quad - \sum_{i=1}^{n}  \Phi_{2} \hat{w}  [ (\theta f_{1})_{x_{i}x_{i}} dt +  (\theta g_{1})_{x_{i}x_{i}} d W(t) ]
        ,
        \\
        I_{73}
        & = \sum_{i,j=1}^{n} \Phi_{3}^{ij} w_{x_{i}x_{j}} d \hat{w}
        \\
        & =
        \sum_{i,j=1}^{n} \biggl(
            - \Phi_{3}^{ij} \hat{w} \hat{w}_{x_{i}}
            + \frac{1}{2} \Phi_{3x_{i}}^{ij} \hat{w}^{2}
        \biggr)_{x_{j}} dt
        + \sum_{i,j=1}^{n} d (\Phi_{3}^{ij} w_{x_{i}x_{j}} \hat{w})
        - \sum_{i,j=1}^{n} \Phi_{3t}^{ij} w_{x_{i}x_{j}} \hat{w}dt
        \\
        & \quad
        - \frac{1}{2} \sum_{i,j=1}^{n} \Phi_{3x_{i}x_{j}}^{ij} \hat{w}^{2}dt
        - \sum_{i,j=1}^{n} \Phi_{3}^{ij} d w_{x_{i}x_{j}} d \hat{w}
        + \sum_{i,j=1}^{n} \Phi_{3}^{ij} \hat{w}_{x_{i}} \hat{w}_{x_{j}}dt
        \\
        & \quad
        - \sum_{i,j=1}^{n} \Phi_{3}^{ij} \hat{w}  [ (\theta f_{1})_{x_{i}x_{j}} dt +  (\theta g_{1})_{x_{i}x_{j}} d W(t) ]
        ,
        \\
        I_{74}
        &= \sum_{i=1}^{n} \Phi_{4}^{i} w_{x_{i}} d \hat{w}
        \\
        &= \sum_{j=1}^{n} \biggl(-
            \frac{1}{2} \Phi_{4}^{j}  \hat{w}^{2}
        \biggr)_{x_{j}} dt
        + \sum_{i=1}^{n} d (\Phi_{4}^{i} w_{x_{i}}  \hat{w})
        - \sum_{i=1}^{n} \Phi_{4t}^{i} w_{x_{i}}  \hat{w} dt
        +\frac{1}{2} \sum_{i=1}^{n} \Phi_{4x_{i}}^{i}  \hat{w}^{2} dt
        \\
        & \quad 
        - \sum_{i=1}^{n} \Phi^{i}_{4} d w_{x_{i}} d\hat{w}
        - \sum_{i=1}^{n}  \Phi_{4}^{i} \hat{w}  [ (\theta f_{1})_{x_{i}} dt +  (\theta g_{1})_{x_{i}} d W(t) ]
        ,
        \\
        I_{75}
        &= \Phi_{5} w d\hat{w} = d(\Phi_{5} w \hat{w})
        - \Phi_{5t} w \hat{w} dt
        - \Phi_{5}  \hat{w}^{2} dt
        - \Phi_{5} d w d\hat{w}
        - \Phi_{5} \hat{w}   (\theta f_{1} dt +  \theta g_{1} d W(t))
        .
    \end{align*}

    By summing all the $ I_{ij} $,  we get \cref{eq.finalEquation-3}.
\end{proof}

\end{document}